\newtheorem{theorem}{Theorem}[section]
\newtheorem{lemma}[theorem]{Lemma}
\newtheorem{proposition}[theorem]{Proposition}
\newtheorem{corollary}[theorem]{Corollary}
\newtheorem{conjecture}[theorem]{Conjecture}
\theoremstyle{definition}
\newtheorem{definition}[theorem]{Definition}
\newtheorem{example}[theorem]{Example}
\newtheorem{remark}[theorem]{Remark}
\newenvironment{proofample}{\noindent {\bf Proof of Proposition \ref{ample}:}}{\qed \par}
\newenvironment{proof again section}{\noindent {\bf Proof of Proposition \ref{again section}:}}{\qed \par}
\newcommand{\excise}[1]{}
\renewcommand{\and}{\qquad\text{and}\qquad}
\newcommand{\becircled}{\mathaccent "7017}
\newcommand{\Lie}{\operatorname{Lie}}
\newcommand{\C}{\mathbb{C}}
\newcommand{\Q}{\mathbb{Q}}
\newcommand{\Z}{\mathbb{Z}}
\newcommand{\R}{\mathbb{R}}
\newcommand{\cV}{\mathcal{V}}
\newcommand{\bA}{\mathbb{A}}
\newcommand{\cT}{\mathcal{T}}
\newcommand{\spec}{\operatorname{Spec}}
\newcommand{\bAE}{\bA^{\!E}}
\newcommand{\gm}{\mathbb{G}_m}
\newcommand{\Hom}{\operatorname{Hom}}
\newcommand{\Yreg}{Y^{\operatorname{reg}}}
\newcommand{\cA}{\mathcal{A}}
\newcommand{\cM}{\mathcal{M}}
\newcommand{\sign}{\operatorname{sign}}
\newcommand{\ba}{\mathbf{a}}
\newcommand{\cMp}{\cM_+}
\newcommand{\tv}{\hat{v}}
\newcommand{\Cext}{C_{\operatorname{ext}}}
\newcommand{\Cl}{\operatorname{Cl}}
\newcommand{\Pic}{\operatorname{Pic}}
\newcommand{\SF}{\operatorname{SF}}
\renewcommand{\div}{\operatorname{div}}
\begin{document}
\spacing{1.2}
\noindent{\Large\bf Hypertoric varieties and zonotopal tilings}\\

\noindent{\bf Matthew Arbo}\\
Department of Mathematics, University of Oregon,
Eugene, OR 97403\\

\noindent{\bf Nicholas Proudfoot}\footnote{Supported by NSF grant DMS-0950383.}\\
Department of Mathematics, University of Oregon,
Eugene, OR 97403\\

{\small
\begin{quote}
\noindent {\em Abstract.}
We give an abstract definition of a hypertoric variety, generalizing the existing constructive definition.
We construct a hypertoric variety associated with any zonotopal tiling, and we show that the previously known examples
are exactly those varieties associated with regular tilings.  
In particular, the examples that we construct from irregular tilings have not appeared before.
We conjecture that our construction gives a complete classification
of hypertoric varieties, analogous to the classification of toric varieties by fans.
\end{quote} }

\section{Introduction}
Hypertoric varieties were introduced by Bielawski and Dancer \cite{BD};
they are examples of conical symplectic varieties, which is currently an active topic of research at the
intersection of algebraic geometry \cite{Fu-survey,Kal09}, representation theory \cite{BMO,BLPWquant,BLPWgco}, and physics \cite{BDG-Coulomb}.
Hypertoric varieties are to conical symplectic varieties as toric varieties are to varieties:  they are the examples
with the largest possible abelian symmetry group, and they can be studied very explicitly via combinatorial methods.

The above analogy can be made into a formal definition of a hypertoric variety (as we will do below),
but this definition is not what appears in the literature, either in \cite{BD} or in numerous subsequent papers.  
Instead, what appears is a construction:
a recipe for building a variety out of a multi-arrangement of rational affine hyperplanes.  
This would be analogous
to defining a toric variety as a space that can be built out of a polyhedron via an explicit construction.
In the toric case, this approach would have a number of drawbacks.  We know that polyhedra classify
not toric varieties, but rather toric varieties equipped with an ample equivariant line bundle.  Projective toric varieties
have lots of different ample equivariant line bundles, while other toric varieties do not admit any.
Instead, one usually defines toric varieties abstractly and proves that they are classified by fans.
For any fan $\Sigma$, ample equivariant line bundles on the toric variety $X(\Sigma)$
are in bijection with polyhedra with normal fan $\Sigma$, of which there may be infinitely many or there may be none at all.

The purpose of this paper is to give an abstract definition of a hypertoric variety (Definition \ref{def}), and to
provide the hypertoric analogue of the classification of toric varieties by fans.  The answer, in brief, is that the hypertoric
analogue of a fan is a zonotopal tiling.  More precisely, given an integral zonotope that contains the origin in its interior
along with a tiling $\cT$ of that zonotope, we construct a variety $Y(\cT)$ and prove that it is hypertoric (Theorem \ref{main}).  We also prove that $\cT$ can be reconstructed from $Y(\cT)$ (Corollary \ref{reconstruct}), 
so that different tilings give rise to different hypertoric varieties, and we conjecture
that all hypertoric varieties arise via our construction (Conjecture \ref{all}).  Finally, we prove that, for any
tiling $\cT$ of an integral zonotope with the origin in its interior, ample equivariant line bundles on $Y(\cT)$
are in bijection with rational multi-arrangements of affine hyperplanes with normal tiling equal to $\cT$
(Corollary \ref{bundles}), of which there may be infinitely many or there may be none.\\

We now give some definitions so as to formulate our results and conjectures more precisely.
Fix an algebraically closed field $k$.  We call a variety $Y$ over $k$ {\bf convex} if
$k[Y]$ is a finitely generated integrally closed domain and the natural map $\pi\colon Y\to Y_0 := \spec k[Y]$ is 
proper and an isomorphism over the regular locus of $Y_0$.
When we talk about an ample line
bundle on $Y$, we will always mean ample relative to the map $\pi:Y\to Y_0$, so that $Y$ is projective over $Y_0$
if and only if it admits an ample line bundle.

If $Y$ is equipped with an action of the multiplicative group $\gm$, we say that $Y$ has {\bf non-negative weights}
if the induced grading of $k[Y]$ is non-negative, and we say that $Y$ has {\bf positive weights} if in addition the degree
zero piece consists only of constant functions.  Geometrically, this means that $Y_0$ is isomorphic
to the affine cone over a weighted projective variety.

Following Beauville \cite{Beau}, we call a normal Poisson variety $Y$ over $k$ {\bf symplectic} if
the Poisson structure on $Y$ is induced by a symplectic form $\omega\in\Omega^2(\Yreg)$
and for some (equivalently any) resolution $\nu:\tilde Y\to Y$, the form $\nu^*\omega$ extends to a 2-form on $\tilde Y$.
A symplectic $\gm$-variety $Y$ is called {\bf conical} if it is convex, it has positive weights, 
and the Poisson bracket has degree -2.

\begin{definition}\label{def}
Let $T$ be a torus.
A {\bf $T$-hypertoric variety} is a conical symplectic variety of dimension $2\dim T$,
equipped with an effective Hamiltonian action of $T$ that commutes with the action of $\gm$.
That is, it is a conical symplectic variety with the largest possible abelian symmetry group.
\end{definition}

Fix a torus $T$, and let $N := \Hom(\gm,T)$ be its cocharacter lattice.  
An {\bf integral zonotope} in $N_\R$ is the image of the cube $[-1,1]^E\subset \R^E$
along an affine linear map from $\R^E$ to $N_\R$ that takes $\Z^E$ to $N$.
Given an integral zonotope $Z\subset N_\R$,
we construct an affine symplectic $T\times\gm$-variety $Y(Z)$, where the action of $T$ is effective and Hamiltonian
and the Poisson bracket has degree -2 with respect to the action of $\gm$.  
The construction appears in Section \ref{chp:affinecase}, but the proof that $Y(Z)$ is symplectic does not come
until Proposition \ref{symp}.  We prove even later (Corollary \ref{pos wts}) that
the variety $Y(Z)$ has non-negative
weights if and only if $0\in Z$, and it has positive weights (and is therefore hypertoric) if and only if 0 lies in the interior
of $Z$.

Given a {\bf tiling} $\cT$ of $Z$ by smaller integral zonotopes
(see Section \ref{sec:combinatorics} for a precise definition and Figures 1 and 2 for examples), we construct
a $T$-hypertoric variety $Y(\cT)$ that is proper over $Y(Z)$ and via a map which is an isomorphism
over the regular locus (Corollary \ref{affinization}, Theorem \ref{main}).  If $\cT$ is the trivial tiling
consisting only of the zonotope $Z$ and its faces, then $Y(\cT) = Y(Z)$.  We conjecture that every $T$-hypertoric variety
arises via this construction (Conjecture \ref{all}).

A {\bf support function} on $\cT$ is a function $\varphi:Z\to\R$ that is linear on each element of $\cT$, takes lattice
points to the integers, and has the property that $\varphi(-\eta)=-\varphi(\eta)$ for all $\eta$ on the boundary of $Z$.
Such a support function is called {\bf strictly convex} if it is convex and the maximal tiles are the maximal domains of linearity.
If $\cT$ admits a strictly convex support function, it is called {\bf regular}.
We show that $Y(\cT)$ is projective over $Y(Z)$ if and only if $\cT$ is regular (Corollary \ref{regproj}),
and that the choices of a $T$-equivariant ample line bundle corresponds to the choice of a 
strictly convex support function on $\cT$ (Corollary \ref{bundles}).  See Figure 2 for an example of an irregular tiling.

As we explain in Section \ref{sec:combinatorics}, the set of pairs $(\cT,\varphi)$ consisting of a tiling along with a strictly convex
support function are in bijection with multi-arrangements of affine hyperplanes in $N_\R^*$ of the form
$\{m\in N_\R^*\mid m(a) + r = 0\}$, where $a\in N$ and $r\in\Z$.  Given such a multi-arrangement,
Bielawski and Dancer \cite{BD} and Hausel and Sturmfels \cite{HS} constructed a Poisson $T$-variety
equipped with an ample line bundle, and their construction coincides
with ours in this case (Remark \ref{GIT}).  Thus the construction of hypertoric varieties that appears in the literature
is a special case of our more general construction.  It is not obvious that the varieties
constructed in \cite{BD} and \cite{HS} are symplectic; this is clear in the smooth case, but nontrivial in the singular case.
We know of no proof of this fact other than as a special case of our Proposition \ref{symp}, in which we show
that $Y(\cT)$ is symplectic for any tiling $\cT$ (regular or not).\\

Most existing work on conical symplectic varieties includes the explicit hypothesis that the affinization map $\pi:Y\to Y_0$
is projective.  This never seemed to be much of a restriction, because there were no known examples for which $\pi$ was proper but
not projective.  This paper provides many such examples.  Indeed, it is in some sense the case that {\em most} zonotopal tilings
are irregular, and therefore that most hypertoric varieties are not projective over their affinizations.\\

We conclude the introduction with the following table, which summarizes the (conjectural) classification
of hypertoric varieties and its analogy with the classification of toric varieties.

\begin{table}[ht] \centering 
\begin{tabular}{|p{3.4cm}|p{3.6cm}||p{3.4cm}|p{4cm}|}\hline
Polyhedral structure & Toric object & Zonotopal structure & Hypertoric object \\ \hline \hline
pointed rational\;\;\;\;\;\;\;\; cone $\sigma\subset N_\R$ & affine $T$-toric variety $X(\sigma)$ & 
integral zonotope\;\;\;\;\;\; $Z\subset N_\R$ with $0\in\becircled{Z}$ & affine $T$-hypertoric\;\;\;\;\;\;\;\;\; variety $Y(Z)$ \\ \hline
rational fan $\Sigma$\;\;\;\;\;\;\;\;\; refining $\sigma$ & $T$-toric variety $X(\Sigma)$, proper over $X(\sigma)$ & tiling $\cT$ of $Z$ & 
$T$-hypertoric var. $Y(\cT)$, proper over $Y(Z)$ \\ \hline
(strictly convex) support function on $\Sigma$ & $T$-equivariant (ample) line bundle on $X(\Sigma)$ & 
(strictly convex) support function on $\cT$ & $T$-equivariant (ample)\;\;\; line bundle on $Y(\cT)$ \\ \hline
rational polyhedron in $N^*_\R$ with normal fan $\Sigma$ & $T$-equivariant ample line bundle on $X(\Sigma)$ &
multi-arrangement in $N^*_\R$ with normal tiling $\cT$
& $T$-equivariant ample line bundle on $Y(\cT)$ \\ \hline
\end{tabular}
\end{table}

\section{Combinatorial background}\label{sec:combinatorics}
The purpose of this section is to review the combinatorial definitions and constructions that will
be needed in the rest of the paper.
Let $N$ be a lattice, $E$ a finite set, and $\ba\in N^E$ a tuple of elements of $N$.
We define $$Z(\ba) := \sum_{e\in E}[-1,1]\cdot \ba_e\subset N_\R.$$
Any subset of $N_\R$ of this form, or any integral translation of such a subset, is called an 
{\bf integral zonotope}.  If $\ba$ is a basis for $N$, then we call any translation of $Z(\ba)$ a {\bf cube}.
More generally, if $\ba$ is a basis for $N_\R$ and each entry of $\ba$ is primitive, then we call any translation of $Z(\ba)$
a {\bf parallelotope}.

An element of $\{+,-,0\}^E$ is called a {\bf sign vector}.  
For any $r\in\Z^E$, we have the corresponding sign vector $\sign(r)\in \{+,-,0\}^E$,
and we put $$\cV^* := \{\sign(m(\ba))\mid m\in N^*\}\subset\{+,-,0\}^E.$$
For any $\ba\in N^E$ and $u\in \{+,-,0\}^E$,
we define the zonotope $$Z(\ba,u) := \sum_{u_e = 0} [-1,1]\cdot \ba_e + \sum_{u_e = +}\ba_e - \sum_{u_e = -}\ba_e\subset Z(\ba).$$
Then $Z(\ba,u)$ is a face of $Z(\ba)$ if and only if $u\in\cV^*(\ba)$ \cite[7.17]{Ziegler}, and all faces of $Z(\ba)$ are of this form.\footnote{We will
adopt the convention that the empty set is {\em not} a face of any zonotope.}

\begin{remark}\label{psa}
For most of our applications, it will be convenient to require $\ba$ to be a {\bf primitive spanning configuration},
by which we mean that every entry of $\ba$ is a nonzero primitive vector and $\ba$ spans $N_\R$.  We observe
that every integral zonotope may be written in the form $Z(\ba,u)$ for some primitive spanning configuration $\ba$
and sign vector $u$.  
\end{remark}

\begin{remark}\label{modify}
The choice of $\ba$ and $u$ in Remark \ref{psa} is not unique:  we can modify $(\ba,u)$ by a signed permutation of $E$
to obtain a new pair $(\ba',u')$ with $Z(\ba',u')=Z(\ba,u)$.  We could also construct such a pair $(\ba',u')$ by appending 
a collection of vectors that add to zero to $\ba$ and appending a collection of all $+$ coordinates to $u$.  Conversely, if $Z(\ba',u')=Z(\ba,u)$,
then $(\ba',u')$ is related to $(\ba,u)$ by a sequence of operations of these two forms.
\end{remark}

\begin{remark}\label{central arrangements}
A primitive arrangement $\ba$ also determines a central integral multi-arrangement $\cA(\ba)$ of hyperplanes
in $N_\R^*$, where the hyperplanes are the perpendicular spaces to the vectors.  Every central integral multi-arrangement
in $N_\R^*$ arises in this way, and we have $\cA(\ba) = \cA(\ba')$ if and only if $\ba$ and $\ba'$ differ by a signed permutation.
Thus there is a canonical bijection between integral zonotopes $Z\subset N_\R$ centered at the origin
and central integral multi-arrangements in $N_\R^*$.  Requiring $\ba$ to be spanning has the effect of imposing the
condition that $\dim Z = \dim N_\R$ and the condition that the corresponding hyperplane arrangement is essential.
\end{remark}

Let $Z\subset N_\R$ be an integral zonotope.  A {\bf tiling} of $Z$ is a set $\cT$ of integral zonotopes
satisfying the following conditions:
\begin{itemize}
\item $Z = \bigcup_{F\in \cT} F$
\item If $F'\in\cT$ and $F$ is a face of $F'$, then $F'\in\cT$
\item If $F,F'\in\cT$ and $F\cap F'$ is nonempty, then $F\cap F'$ is a face of both $F$ and $F'$.
\end{itemize}

\begin{example}\label{boring}
For any zonotope $Z$, the set of faces of $Z$ is a tiling of $Z$.  This is the only tiling of $Z$ if and only if $Z$ is a parallelotope.
\end{example}

\begin{example}\label{ptwo}
Let $N=\Z^2$, and let $\ba$ consist of the three vectors $(1,0)$, $(0,1)$, and $(1,1)$.
Then $Z(\ba)$ is a hexagon and admits three distinct tilings.  The first one is the one described in Example \ref{boring},
consisting only of $Z$ and its faces; the other two are shown below.

\begin{figure}[ht]
\begin{center}
\includegraphics[width=5cm]{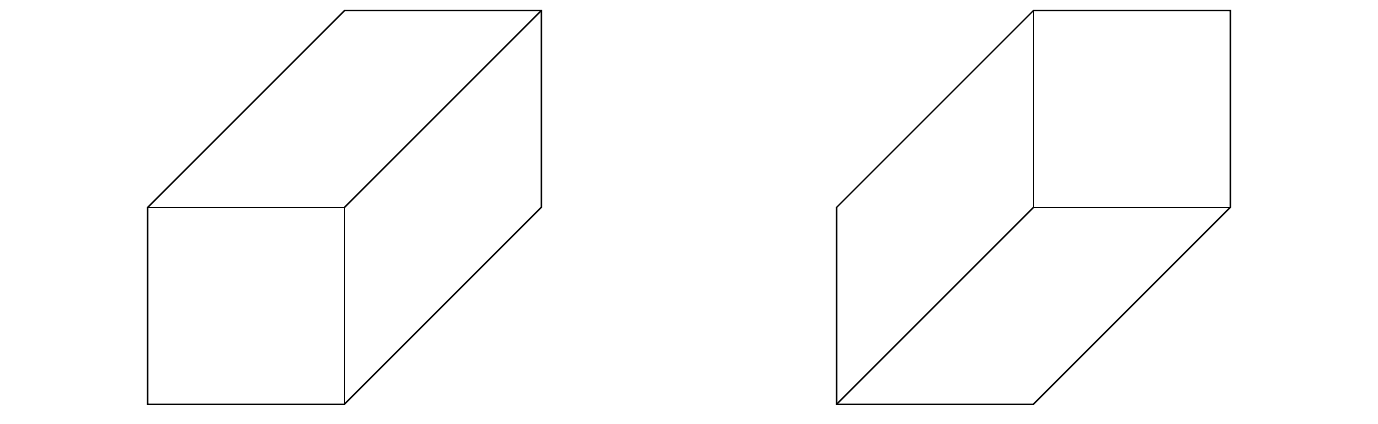}
\end{center}
\caption{Two tilings of a hexagon by cubes.}
\end{figure}

\noindent
More generally, suppose that $|E| = \operatorname{rk} N + 1$ and $\ba$ consists of $|E|$ vectors, any $|E|-1$ of which form a basis for $N$.
Then $Z(\ba)$ admits three distinct tilings, one of which is trivial and the other two of which have $|E|$ cubes as their
maximal tiles.
\end{example}

\begin{remark}
Every zonotope admits a tiling by parallelotopes; such tilings are precisely the ones that cannot be refined.
The zonotope $Z(\ba)$ admits a tiling by cubes if and only if $\ba$ is {\bf unimodular}, which means that for any subset $S\subset E$,
$\Z\{a_e\mid e\in S\} = N\cap \R\{a_e\mid e\in S\}$.  If $Z$ admits a tiling by cubes, then every tiling of $Z$ by parallelotopes is a tiling by cubes.
\end{remark}

We next describe three closely related constructions of tilings of the zonotope $Z(\ba)$, where $\ba$ is a primitive configuration
(we will not assume here that $\ba$ is spanning).
\\\\
{\bf Tuples of integers.}
Given an element $r\in\Z^E$, define a primitive configuration $\tilde\ba\in(N\oplus\Z)^E$ by putting $\tilde\ba_e = (\ba_e,r_e)$, and let 
$Z(\ba,r) := Z(\tilde\ba)\subset N_\R\oplus\R$ be its associated zonotope.  Then $Z(\ba)$ is the projection of
$Z(\ba,r)$ to $N_\R$.   Define a function $\psi(\ba,r):Z(\ba)\to\R$ given by the formula
$$\psi(\ba,r)(\eta) := \max\{s\mid(\eta,s)\in Z(\ba,r)\}.$$
Then $\psi(\ba,r)$ is a convex piecewise linear function on $Z(\ba)$, and the maximal domains of linearity
are the maximal elements of a tiling $\cT(\ba,r)$ of $Z(\ba)$.  
This is the tiling that you would ``see" if you looked down at $Z(\ba,r)$ from the point $(0,\infty)$.
Such a tiling of $Z(\ba)$ is called {\bf regular}.  The tilings in Example \ref{ptwo} are regular,
but the tiling of the larger hexagon in Figure \ref{irregular} is not \cite[Ex. 7.16]{Ziegler}.
\begin{figure}[ht]\label{irregular}
\begin{center}
\includegraphics[width=4cm]{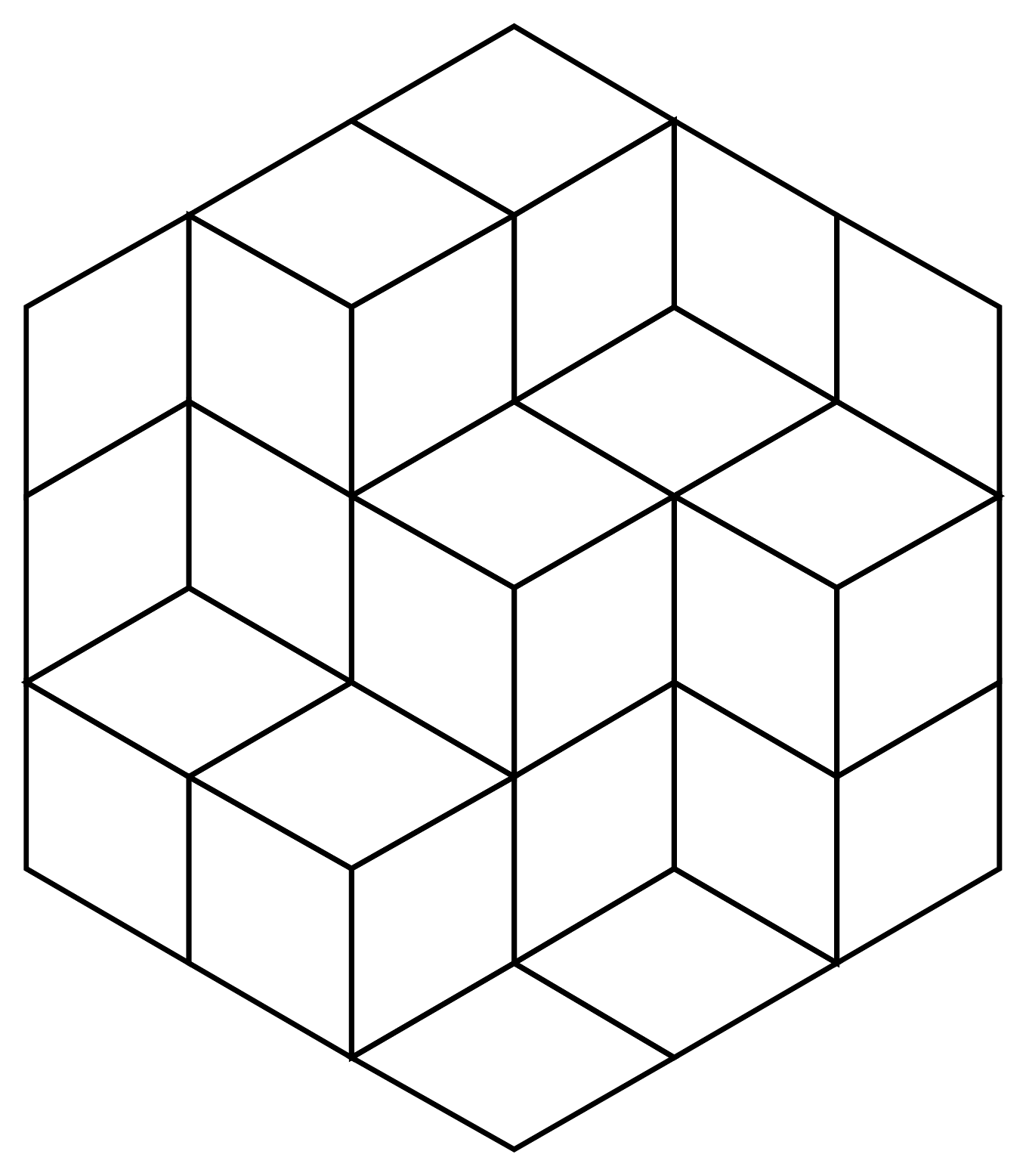}
\end{center}
\caption{An irregular tiling of a hexagon by cubes.}
\end{figure}\\\\
{\bf Affine arrangements.}
An element $r\in\Z^E$ also determines an integral affine multi-arrangement $\cA(\ba,r)$ in $N^*_\R$, obtained by intersecting
$\cA(\tilde\ba)$ with $N_\R^*\times\{1\}\subset N_\R^*\times\R$.  Equivalently, 
$\cA(\ba,r)$ is obtained by translating the hyperplanes
of $\cA(\ba)$ away from the origin, where the translations are determined by the coordinates of $r$.
Every integral affine multi-arrangement arises via this construction,
and $\cA(\ba,r) = \cA(\ba',r')$ if and only if the pairs $(\ba,r)$ and $(\ba',r')$ differ by a signed permutation of $E$.
Thus we obtain a surjective map from integral affine multi-arrangements in $N^*_\R$ to regular tilings in $N_\R$.
Furthermore, we have an inclusion-reversing bijection between the faces of $\cA(\ba,r)$ and the elements of $\cT(\ba,r)$.
This should be regarded as an analogue of the construction that takes a polyhedron in $N_\R^*$ to its normal
fan, which is a regular fan in $N_\R$, where we have an inclusion-reversing bijection between faces of the polyhedron
and cones in the fan.  Consequently, we refer to the zonotopal tiling associated with an integral affine multi-arrangement
as the {\bf normal tiling} to that arrangement. 
\\\\
{\bf Affine oriented matroids.}
Let $\tilde E := E\sqcup\{\infty\}$.  For any $u\in\subset\{+,-,0\}^E$ and $\epsilon\in\{+,-,0\}$, let 
$(u,\epsilon)$ be the corresponding element of $\{+,-,0\}^{\tilde E}$.
Let $\cM$ be an affine oriented matroid over $\cV^*(\ba)$;
that is, $\cM\subset\{+,-,0\}^{\tilde E}$ is a collection of sign vectors that satisfy the covector axioms of an oriented matroid
\cite[7.21]{Ziegler}, and $u\in\cV^*(\ba)$ if and only if $(u,0)\in\cM$.  Let $\cMp := \{u\mid (u,+)\in\cM\}$; 
this is called the set of
{\bf positive covectors} of $\cM$.  Then $\cT(\ba,\cM) := \{Z(\ba,u)\mid u\in\cMp\}$ is a tiling of $Z(\ba)$.  Furthermore,
the Bohne-Dress theorem \cite[7.32]{Ziegler} says that every tiling of $Z(\ba)$ arises in this manner.

\begin{remark}
Note that, for any $r\in\Z^E$, $\cM(\ba,r) := \cV^*(\tilde\ba)$ is an affine oriented matroid over $\cV^*(\ba)$, and
$\cT(\ba,\cM(\ba,r)) = \cT(\ba,r)$.
\end{remark}

\begin{remark}
Given $\ba\in N^E$ and $u\in\{+,-,0\}^E$, let 
$E_u := \{e\in E\mid u_e=0\}$ and let $\bar\ba$ be the restriction of $\ba$ to $E_u$.
For any $v\in \{+,-,0\}^{E_u}$, define $\hat v\in\{+,-,0\}^E$ by appending the nonzero entries of $u$ to $v$.  Then any affine oriented
matroid $\cM$ over $\cV^*(\bar\ba)$ determines a tiling $\cT(\ba,u,\cM) := \{Z(\ba,\hat v)\mid v\in\cMp\}$ of $Z(\ba,u)$, and the Bohne-Dress
theorem easily generalizes to say that every tiling of $Z(\ba,u)$ arises in this manner.
\end{remark}

Let $\ba\in N^E$ be a primitive spanning arrangement and let $\cT = \cT(\ba,\cM)$ be a tiling of $Z = Z(\ba)$.
A {\bf support function} on $\cT$ is a function $\varphi:Z\to\R$ that is affine linear on every element of $\cT$
and has the property that $\varphi(-\eta) = -\varphi(\eta)$ for all $\eta$ on the boundary of $Z$.
Let $\SF(\cT)$ be the group of support functions under addition; we will describe this group more explicitly.
Let $\Lambda\subset\Z^E$ be the kernel of the map $\Z^E\to N$ taking the coordinate vector indexed by $e$ to $\ba_e$.
For each positive covector $u\in\cMp$,
let $\Lambda_u := \Lambda\cap \Z^{E_u}\subset\Z^E$.
Let $$\Lambda_\cT \;:= \sum_{u\in\cMp}\Lambda_u.$$  Note that $\Lambda_\cT = 0$ if
and only if every element of $\cT$ is a parallelotope, and $\Lambda_\cT = \Lambda$ if and only if
$\cT$ consists only of $Z$ and its faces.
Let 
\begin{equation}\label{pct}
P_\cT := \Lambda_\cT^\perp = \{r\in\Z^E \mid \text{$r\cdot\lambda = 0$ for all $\lambda\in\Lambda$}\}
\end{equation} 
be the perpendicular space to $\Lambda$ with respect to the dot product.  For each $r\in P_\cT$, define
a function $\varphi(\ba,r):Z\to\R$ as follows:  for each $u\in\cMp$, any element $\eta\in Z(\ba,u)$ may be expressed
in the form $\eta = \sum t_e\ba_e$, where 
$$t_e\in
\begin{cases}
[-1,1] \;\;\;\text{if $u_e = 0$}\\
\{1\} \;\;\;\;\;\;\;\,\text{if $u_e = +$}\\
\{-1\} \;\;\;\;\;\text{if $u_e = -$}.
\end{cases}$$
For such an $\eta$, we define $\varphi(\ba,r)(\eta) := r\cdot t$.  The element $t\in \R^E$ is not uniquely determined
by $\eta$, but $P_\cT$ is defined precisely to ensure that $\varphi(\ba,r)(\eta)$ is well-defined.  In this way,
we obtain a group isomorphism $r\mapsto\varphi(\ba,r)$ from $P_\cT$ to the group $\SF(\cT)$ of support functions on $\cT$.

A support function $\varphi$ on $\cT$ is said to be {\bf strictly convex} if it is convex and the maximal
elements of $\cT$ are the maximal domains of linearity of $\varphi$.  We have already seen an example
of such a function:  
the function $\psi(\ba,r)$ is a strictly convex support function on the tiling $\cT(\ba,r)$.
In fact, all strictly convex functions have this form.  More precisely, $\varphi(\ba,r)$ is strictly convex on $\cT$
if and only if it is convex and $\cT = \cT(\ba,r)$, in which case $\varphi(\ba,r) = \psi(\ba,r)$.  
In particular, a tiling $\cT$ admits a strictly convex support function if and only if it is regular.

\begin{remark}\label{local fan}
For any pair $F\subset F'\in\cT$, we define the cone $\sigma_{F,F'}:= \R_{\geq 0}(F'-F)$, and for any $F\in \cT$,
we define the {\bf local fan} $\Sigma_F := \{\sigma_{F,F'}\mid F'\supset F\}$.  Geometrically, $\Sigma_F$ is the fan that you ``see" when you zoom in near a point in the relative interior of $F$.  If $\varphi$ is a strictly convex support
function on $\cT$, then it induces a strictly convex support function on the local fan $\Sigma_F$ for every $F\in\cT$.
In particular, if there exists an element $F\in\cT$ such that the fan $\Sigma_F$ does not admit a strictly convex
support function, then neither does $\cT$.
This is an obstruction to regularity of $\cT$, but it is not the only obstruction.
In the tiling shown in Figure \ref{irregular}, every $\Sigma_F$ admits a strictly convex support function
(this is always the case in dimension 2), but $\cT$ does not.
\end{remark}

\section{Affine varieties from zonotopes}\label{chp:affinecase}
Let $N$ be a lattice and let $T := \spec k[N^*]$ be the algebraic torus over $k$ with cocharacter lattice $\Hom(\gm,T)\cong N$.
The purpose of this section will be to assign to any integral zonotope $Z\subset N_\R$ a Poisson $T\times\gm$-variety $Y(Z)$.
We will proceed by defining a variety $Y(\ba, u)$ for any triple consisting of a 
finite set $E$, a primitive spanning configuration $\ba\in N^E$, and a sign vector $u\in\{+,-,0\}^E$; we will then show that 
$Y(\ba, u)$ depends only on the zonotope $Z(\ba, u)$.

Given a finite set $E$, consider the symplectic vector space $S(E) := \spec k[z_e, w_e]_{e\in E}$, 
with symplectic form $\sum_{e\in E}z_e\wedge w_e$.
The group $\gm^E\times\gm$ acts as follows:  if $t\in \gm^E$ and $s\in\gm$, then
$$(t,s)\cdot z_e = t_e^{-1} s z_e\and (t,s)\cdot w_e = t_es w_e.$$
The symplectic form has weight 2 with respect to the action of $\gm$, while the action of $\gm^E$
is Hamiltonian with moment map
$$\mu_E = (z_ew_e)_{e\in E}:S(E)\to\bAE\cong\Lie(\gm^E)^*.$$

Let $\ba\in N^E$ be a primitive spanning configuration, and
let $K$ be the kernel of the homomorphism $\gm^E\to T$ induced by $\ba$.
Let $\mu_\ba$ be the composition of $\mu_E$ with the projection $\Lie(\gm^E)^*\to\Lie(K)^*$, which is a moment
map for the action of $K$ on $S(E)$.  We define $Y(\ba)$ to be the affine symplectic quotient of $S(E)$
by $K$; that is, $$Y(\ba) := \spec k[\mu_\ba^{-1}(0)]^K.$$

\begin{remark}\label{affineBD}
Our definition of $Y(\ba)$ agrees with the original definition by Bielawski and Dancer \cite[\S 5]{BD}.
They work exclusively over $\C$, but their construction makes sense for arbitrary $k$.
\end{remark}

The group $\gm^E\times\gm$ acts on $Y(\ba)$ with kernel $K$, so this action descends to an effective action
of $T\times\gm$.  The Poisson bracket on $k[S(E)]$ induces a Poisson bracket on $k[\mu_\ba^{-1}(0)]^K$,
which has weight 0 with respect to the action of $T$ and weight -2 with respect to the action of $\gm$.
The action of $T$ is Hamiltonian with moment map $$\mu:Y(\ba)\to \ker\left(\Lie(\gm^E)^*\to\Lie(K)^*\right)\cong\Lie(T)^*$$
induced by $\mu_E$.  This particular choice of moment map is uniquely characterized by the condition
that it is homogenous (of weight 2) for the action of $\gm$.

\begin{proposition}\label{ya}
If $\ba$ and $\ba'$ are primitive spanning configurations in $N$ with $Z(\ba) = Z(\ba')$, then
$Y(\ba)\cong Y(\ba')$ as Poisson $T\times\gm$-varieties.
\end{proposition}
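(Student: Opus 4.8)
The plan is to reduce the geometric statement to a purely combinatorial one, and then to build the isomorphism on the ambient symplectic vector space $S(E)$, exploiting the fact that $S(E)$ depends only on $E$, not on $\ba$. First I would record the combinatorial input. Both $Z(\ba)$ and $Z(\ba')$ are integral zonotopes centered at the origin, so by the bijection of Remark \ref{central arrangements} the equality $Z(\ba)=Z(\ba')$ is equivalent to the equality $\cA(\ba)=\cA(\ba')$ of the associated central multi-arrangements, which holds precisely when $\ba$ and $\ba'$ differ by a signed permutation of a common index set $E$ (in particular $|E|=|E'|$). A signed permutation is a composition of relabelings $\pi\colon E\to E$ and single-coordinate sign flips $\ba_e\mapsto-\ba_e$, so it suffices to exhibit a Poisson $T\times\gm$-isomorphism $Y(\ba)\cong Y(\ba')$ in each of these two cases.

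The key observation is that $S(E)$, its symplectic form, the $\gm$-action, and the moment map $\mu_E$ depend only on $E$; only the subgroup $K=\ker(\gm^E\to T)$, the projected moment map $\mu_\ba$, and the residual $T$-action depend on $\ba$. For a relabeling $\pi$ the required automorphism of $S(E)$ is the evident one permuting the pairs $(z_e,w_e)$; it is symplectic and $\gm^E\times\gm$-equivariant, carries $K(\ba)$ to $K(\ba')$ and $\mu_\ba$ to $\mu_{\ba'}$, and so descends to an isomorphism of the $K$-invariant coordinate rings. For a sign flip at $e$ I would use the symplectic automorphism $\phi_e$ of $S(E)$ given by $z_e\mapsto w_e$, $w_e\mapsto -z_e$, fixing all other coordinates. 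One checks directly that $\phi_e$ preserves $\sum_f z_f\wedge w_f$ and the $\gm$-weights, that it intertwines the $\gm^E$-action with the action twisted by the involution $\iota_e\colon\gm^E\to\gm^E$ inverting the $e$-th factor, and that $\mu_E\circ\phi_e$ equals $\mu_E$ with its $e$-th coordinate negated. Since replacing $\ba_e$ by $-\ba_e$ replaces the homomorphism $\gm^E\to T$ by its precomposition with $\iota_e$, we have $K(\ba')=\iota_e(K(\ba))$; hence $\phi_e$ restricts to an isomorphism $\mu_\ba^{-1}(0)\to\mu_{\ba'}^{-1}(0)$ carrying $K(\ba)$-orbits to $K(\ba')$-orbits, and therefore descends to an isomorphism $Y(\ba)\to Y(\ba')$.

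Finally I would confirm that each descended map respects all the structure. Each ambient automorphism is symplectic, so the induced map on the symplectic reductions is Poisson; each is $\gm$-equivariant, so the induced map respects the $\gm$-action and hence the degree $-2$ Poisson bracket; and each realizes on $\gm^E$ exactly the automorphism relating the two maps to $T$, so the residual $T$-actions correspond and the isomorphism is $T$-equivariant. The step requiring the most care is the sign-flip case: one must pin the signs in $\phi_e$ so that it \emph{simultaneously} preserves the symplectic form, preserves the $\gm$-weights, and reproduces precisely the coordinate inversion $\iota_e$ and the moment-map sign change induced on $K$ and $\mu_\ba$ by $\ba_e\mapsto-\ba_e$. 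Once that single computation is fixed, the relabeling case and all the compatibility checks are routine and functorial.
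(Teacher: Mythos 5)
Your proposal is correct and follows essentially the same route as the paper: reduce to the fact that $Z(\ba)=Z(\ba')$ forces $\ba$ and $\ba'$ to differ by a signed permutation, then realize that signed permutation as a $\gm$-equivariant symplectic automorphism of $S(E)$ (permuting coordinate pairs, and sending $z_e\mapsto w_e$, $w_e\mapsto -z_e$ at a sign flip) that carries $K$ to $K'$ and $\mu_\ba^{-1}(0)$ to $\mu_{\ba'}^{-1}(0)$, hence descends to the quotients. The paper's proof is just a terser version of the same argument; your extra verifications (preservation of the form, of the $\gm$-weights, and the intertwining with $\iota_e$) are exactly the checks it leaves implicit.
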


\begin{proof}
If $Z(\ba) = Z(\ba')$, then $\ba$ and $\ba'$ differ by a signed permutation of $E$.
This signed permutation induces a $\gm$-equivariant Poisson automorphism of $S(E)$
in which the coordinates are permuted and, if there is a sign in the $e$ coordinate, we send
$z_e$ to $w_e$ and $w_e$ to $-z_e$.  This automorphism is not $\gm^E$-equivariant;
rather, it takes the action of $K$ to the action of $K'$, and descends to a $T\times\gm$-equivariant
Poisson isomorphism from $Y(\ba)$ to $Y(\ba')$.
\end{proof}

Now let $u\in\{+,-,0\}^E$ be any sign vector.  Let
$$f_u := \prod_{u_e = +}z_e \cdot \prod_{u_e = -} w_e \in k[S(E)],$$
and let $S(E)_u := \spec k[S(E)]_{f_u}$ be the complement of the vanishing locus of $f_u$.
We define $Y(\ba, u)$ to be the affine quotient of $S(E)_u\cap\mu_{\ba}^{-1}(0)$ by $K$.
The inclusion of $S(E)_u$ into $S(E)$ defines a $T\times\gm$-equivariant morphism $Y(\ba,u)\to Y(\ba)$ of 
Poisson $T\times\gm$-varieties.

\begin{proposition}\label{include}
If $u\in\cV^*(\ba)$, then
the morphism $Y(\ba,u)\to Y(\ba)$ is an open inclusion of Poisson varieties.
\end{proposition}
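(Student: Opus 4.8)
The plan is to replace the semi-invariant $f_u$, whose nonvanishing locus cuts out $S(E)_u\cap\mu_\ba^{-1}(0)$, by a genuine $K$-invariant function $h$ having the \emph{same} nonvanishing locus; the role of the hypothesis $u\in\cV^*(\ba)$ is precisely to guarantee that such an $h$ exists. Write $R := k[\mu_\ba^{-1}(0)]$, so that $Y(\ba) = \spec R^K$, and since $k[S(E)_u\cap\mu_\ba^{-1}(0)] = R_{f_u}$ we have $Y(\ba,u) = \spec (R_{f_u})^K$. The task is then to identify $\spec(R_{f_u})^K$ with a principal open subscheme of $\spec R^K$.

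First I would unwind the hypothesis. By definition $u\in\cV^*(\ba)$ means there is some $m\in N^*$ with $\sign(m(\ba_e)) = u_e$ for every $e$; that is, $m(\ba_e)>0$ when $u_e=+$, $m(\ba_e)<0$ when $u_e=-$, and $m(\ba_e)=0$ when $u_e=0$. Using this $m$ I would set
$$h := \prod_{u_e=+} z_e^{\,m(\ba_e)}\;\prod_{u_e=-} w_e^{\,-m(\ba_e)}\in k[S(E)],$$
which is an honest polynomial since all the exponents are positive integers. Next I would verify two facts. First, $h$ is $K$-invariant: a direct weight computation shows that $\prod_e z_e^{p_e}w_e^{q_e}$ carries the $\gm^E$-weight $(q_e-p_e)_e\in\Z^E$, and for our $h$ this weight equals $(-m(\ba_e))_e$, which is the image of $-m$ under the inclusion $N^*\hookrightarrow\Z^E$ dual to $\ba$; the characters of $\gm^E$ pulled back from $N^*=X^*(T)$ are exactly those trivial on $K$, so $h$ descends to $Y(\ba)$. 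Second, the nonvanishing loci coincide: $h\neq 0$ forces $z_e\neq 0$ for all $u_e=+$ and $w_e\neq 0$ for all $u_e=-$, and conversely, so $\{h\neq 0\}=\{f_u\neq 0\}=S(E)_u$. Consequently $S(E)_u\cap\mu_\ba^{-1}(0)$ is the principal open subscheme $(\mu_\ba^{-1}(0))_h$ cut out by the invariant $h$, and in particular $R_{f_u}=R_h$.

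Finally I would conclude. Because $K$ is a diagonalizable group it is linearly reductive over any $k$, so the functor of $K$-invariants is exact and commutes with localization at the invariant element $h$; thus $(R_{f_u})^K=(R_h)^K=(R^K)_h$. Hence $Y(\ba,u)=\spec(R^K)_h$ is the principal open subscheme of $Y(\ba)=\spec R^K$ defined by $h$, and $Y(\ba,u)\to Y(\ba)$ is an open immersion. That it is an open inclusion of \emph{Poisson} varieties is then automatic: the map is induced by the open Poisson inclusion $S(E)_u\hookrightarrow S(E)$ and is $T\times\gm$-equivariant by construction, so the Poisson structure on $Y(\ba,u)$ obtained by reduction is simply the restriction of the one on $Y(\ba)$.

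The one genuinely substantive step, and the place where the covector hypothesis is indispensable, is the construction of $h$ in the display above. Without an $m\in N^*$ realizing the sign pattern $u$, the semi-invariant $f_u$ need not share its nonvanishing locus with any $K$-invariant function, and the affine quotient of $\{f_u\neq 0\}$ would in general fail to embed as an open subset of $Y(\ba)$; the identity $\{h\neq 0\}=\{f_u\neq 0\}$ is exactly the mechanism that converts the (a priori non-saturated) semi-invariant localization into an honest principal localization of the invariant ring.
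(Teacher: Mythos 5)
Your proof is correct, but it takes a genuinely different route from the paper's. The paper argues dynamically: it asserts that the map fails to be an open inclusion exactly when some point of $S(E)_u\cap\mu_\ba^{-1}(0)$ has a limit under a one-parameter subgroup $\lambda\in\Lambda$ that exists but leaves $S(E)_u$, writes out the resulting sign conditions on $\lambda$, $p$, $q$, and $u$, and uses the covector $m\in N^*$ realizing $u$ to rule them out via the identity $\sum\lambda_e m(\ba_e)=0$ (a Hilbert--Mumford-style saturation argument, whose underlying criterion is itself left unproved). You instead use the same $m$ to manufacture an explicit $K$-invariant monomial $h=\prod_{u_e=+}z_e^{m(\ba_e)}\prod_{u_e=-}w_e^{-m(\ba_e)}$ with $\{h\neq 0\}=S(E)_u$, so that $R_{f_u}=R_h$ and $(R_h)^K=(R^K)_h$, exhibiting $Y(\ba,u)$ as a distinguished affine open of $Y(\ba)$. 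Your weight computation and the identification of $K$-invariant characters with the image of $N^*$ in $\Z^E$ are correct, and the commutation of invariants with localization at an invariant element holds for any group scheme (it is a filtered colimit along equivariant maps), so linear reductivity is more than you need. What your approach buys is a stronger and more self-contained conclusion -- $Y(\ba,u)$ is a \emph{principal} open subset of $Y(\ba)$, with no appeal to an unproved limit criterion; in effect you have constructed the explicit invariant whose existence the paper's semistability argument only implicitly certifies. What the paper's approach buys is a template that signals clearly how the argument would fail for $u\notin\cV^*(\ba)$, and which generalizes directly to the semistable-locus computations used later (e.g.\ in Remark \ref{GIT}).
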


\begin{proof}
Given a point $(p,q)\in S(E)$, let $p_e = z_e(p,q)$ and $q_e = w_e(p,q)$.
The morphism $Y(\ba,u)\to Y(\ba)$ fails to be an open inclusion if and only if there exists
a point $(p,q)\in S(E)_u\cap \mu_{\ba}^{-1}(0)$ and an element
$$\lambda \in \Lambda =\ker(\Z^E\to N) \cong \Hom(K,\gm)$$
such that $\lim_{t\to 0}\lambda(t)\cdot (p,q)$ exists and lies outside of $S(E)_u$.
Explicitly, this is equivalent to the following set of conditions:
\begin{itemize}
\item $\sum \lambda_e \ba_e = 0$
\item $\lambda_e <0 \;\;\Rightarrow\;\; p_e = 0 \;\;\Rightarrow\;\; u_e \neq +$
\item $\lambda_e >0 \;\;\Rightarrow\;\; q_e = 0 \;\;\Rightarrow\;\; u_e \neq -$
\item there exists an $e\in E$ such that $u_e\neq 0$ and $\lambda_e\neq 0$.
\end{itemize}
The hypothesis that $u\in\cV^*(\ba)$ means that there exists an element $m\in N^*$ such that $u_e$
is equal to the sign of $m(\ba_e)$ for all $e\in E$.  The first condition above tells us that
$\sum\lambda_e m(\ba_e) = 0$, and the last condition tells us that the individual terms of this sum are not all zero.
That means that there must be at least one positive term and at least one negative term.
However, the second and third conditions tell us that there can be no negative terms.
\end{proof}

\begin{remark}\label{lawrence}
We defined $Y(\ba, u)$ by first passing to the zero level of $\mu_\ba:S(E)_u\to\Lie(K)^*$ and then taking an affine
GIT quotient by $K$.  We could equally well have done this in the opposite order.  The affine
GIT quotient $X(\ba, u) := \spec k[S(E)_u]^K$ is called the {\bf affine Lawrence toric variety} associated
with the pair $(\ba, u)$.  The map $\mu_\ba$ descends to a map on $X(\ba, u)$,
and $Y(\ba, u)$ is isomorphic to the subvariety of $X(\ba)$ defined by the vanishing of this map.

Let us be a little bit more explicit about the combinatorics of the affine Lawrence toric variety, as we will
need to come back to it later in the paper.  
Consider the antidiagonal embedding of $\Lambda$ into $\Z^E\oplus\Z^E$, and let $\tilde N$
denote the quotient of $\Z^E\oplus\Z^E$ by $\Lambda$.  
For each $e\in E$, let $\rho_e^{\pm}\in \tilde N$
be the images of the two basis vectors of $\Z^E\oplus\Z^E$ indexed by $e$.
Then $X(\ba)$ is the $(\gm^E\times\gm^E)/K$-toric variety associated with the {\bf Lawrence cone} $\sigma(\ba,u)\subset\tilde N_\R$
spanned by the vectors $$\{-\rho_e^\epsilon\mid \epsilon\neq u_e\} 
\;\;=\;\; \{-\rho_e^{\pm}\mid u_e = 0\}\cup\{-\rho_e^+\mid u_e=-\}\cup\{-\rho_e^-\mid u_e=+\}.$$
\end{remark}

Given a $T\times\gm$-variety $Y$ and a cocharacter $\eta\in N$ of $T$, let $Y[\eta]$
be the $T\times\gm$-variety whose underlying variety is $Y$ on which the element
$(t,s)\in T\times\gm$ acts in the same way that $(\eta(s)^{-1}t,s)$ acts on $Y$.  Thus $Y[\eta]$ is
$T$-equivariantly isomorphic to $Y$, but this isomorphism cannot in general be made $T\times\gm$-equivariant.
We refer to $Y[\eta]$ as the {\bf twist} of $Y$ by $\eta$.

\begin{lemma}\label{append}
Let $E' = E \sqcup \{0\}$, let $\ba'\in N^{E'}$ be a primitive spanning configuration obtained by appending
$\eta$ to $\ba$, and let $u'\in\{+,-,0\}^{E'}$ be the sign vector obtained by appending $+$ to $u$.
Then $Y(\ba', u')$ is isomorphic to $Y(\ba,u)[\eta]$ as a Poisson $T\times\gm$-variety.
\end{lemma}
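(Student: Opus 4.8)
The plan is to realize the passage from $Y(\ba,u)$ to $Y(\ba',u')$ as a symplectic (GIT) reduction in stages, peeling off the new coordinate circle last. Write $E'=E\sqcup\{0\}$, so that $S(E')=S(E)\times\spec k[z_0,w_0]$ and
$$\gm^{E'}=\gm^E\times\gm^{\{0\}},\qquad S(E')_{u'}=S(E)_u\times\{z_0\neq0\},\qquad f_{u'}=f_u\,z_0,$$
the middle identity because $u'$ is obtained from $u$ by appending a $+$. Since $\ba$ is spanning, the map $\gm^E\to T$ is surjective, and appending the primitive vector $\eta=\ba'_0$ yields a short exact sequence $1\to K\to K'\to\gm^{\{0\}}\to1$, where $K\cong K\times\{1\}\subset K'$ and the quotient map is projection to the $0$th coordinate. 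The first thing I would record is that the image of $K'$ in the residual torus $\gm^{E'}/K\cong T\times\gm^{\{0\}}$ is the antidiagonal one-parameter subgroup $\{(\eta(\tau)^{-1},\tau)\mid\tau\in\gm\}$; this antidiagonal is the source of the twist.

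Next I would reduce by $K$. As $K$ acts only on the $S(E)$ factor, its moment map is $\mu_\ba$, so the zero level of the $K$-moment map meets $S(E')_{u'}$ in $\bigl(S(E)_u\cap\mu_\ba^{-1}(0)\bigr)\times\{z_0\neq0\}$, whose quotient by $K$ is $Y(\ba,u)\times\{z_0\neq0\}$. The residual group acting is $T\times\gm^{\{0\}}\times\gm$ (torus, $0$-circle, conical), where $T$ acts on $Y(\ba,u)$ as usual and trivially on $z_0,w_0$; the $0$-circle acts by $z_0\mapsto\tau^{-1}z_0$, $w_0\mapsto\tau w_0$ and trivially on $Y(\ba,u)$; and the conical $\gm$ acts as usual on $Y(\ba,u)$ and with weight $1$ on both $z_0$ and $w_0$.

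It remains to reduce by $K'/K\cong\gm^{\{0\}}$ at level $0$. The key point is that the $0$-circle acts freely on the locus $z_0\neq0$, so the slice $\{z_0=1\}$ identifies the quotient with a closed subvariety; the moment-map equation at index $0$ then reads $z_0w_0=\langle\mu,\eta\rangle$ and simply solves for $w_0$, so the reduced variety is isomorphic to $Y(\ba,u)$. To finish I would transport the residual $T\times\gm$-action to this slice. An element of $T$ already fixes $z_0=1$, so the $T$-action is carried over unchanged. The conical $s$, however, sends $z_0=1$ to $z_0=s$ and must be corrected by the unique element of $K'/K$ returning to the slice: its $0$-circle parameter is forced to be $\tau=s$, and the corresponding antidiagonal element acts on $Y(\ba,u)$ through its $T$-component $\eta(s)^{-1}$. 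Hence $s$ acts on the slice as $\eta(s)^{-1}$ composed with the old conical action, which is exactly the conical action defining $Y(\ba,u)[\eta]$; combined with the untwisted $T$-action, this matches the twist prescription $(t,s)\mapsto(\eta(s)^{-1}t,s)$. Since reduction in stages is a Poisson operation and twisting by the cocharacter $\eta$ of $T$ alters the conical action only by a Poisson (Hamiltonian, weight-zero) automorphism, the degree $-2$ bracket is preserved and the identification is one of Poisson $T\times\gm$-varieties.

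The main obstacle is the bookkeeping in this last stage: one must verify that the compensating $K'/K$-element restoring $\{z_0=1\}$ contributes precisely $\eta(s)^{-1}$—with the correct sign and nothing on the $T$-factor—and that the reduced bracket is carried to the bracket on $Y(\ba,u)$ untouched. This is a matter of pinning down the weight and sign conventions rather than any structural difficulty; one could alternatively phrase the entire argument ring-theoretically, exhibiting $k[Y(\ba',u')]\cong k[Y(\ba,u)]$ directly from the factorization $k[S(E')_{u'}]^{K'}=\bigl(k[S(E)_u]^{K}\otimes k[z_0^{\pm1},w_0]\bigr)^{K'/K}$ together with the relation $z_0w_0=\langle\mu,\eta\rangle$, which makes the Poisson comparison transparent.
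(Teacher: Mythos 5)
Your argument is correct and arrives at the same isomorphism as the paper, but by a genuinely different route. The paper's proof is a direct algebraic computation: it introduces the smallest positive integer $b_0$ with $b_0\eta$ in the integer span of $\ba$, writes $b_0\eta+\sum_e b_e\ba_e=0$, observes that every $K$-invariant monomial $f_{(r,s)}$ has $b$-weight $\ell_{(r,s)}$ divisible by $b_0$, and defines the isomorphism on a monomial basis by $f_{(r,s)}\mapsto z_0^{\ell_{(r,s)}/b_0}f_{(r,s)}$, reading off the twist from the resulting shift of conical weights. Your reduction-in-stages argument packages the same map more conceptually: ``restrict to the slice $z_0=1$'' is precisely the inverse of ``multiply by the unique power of $z_0$ achieving $K'$-invariance,'' and your identification of $K'/K$ with the antidiagonal $\{(\eta(\tau)^{-1},\tau)\}\subset T\times\gm^{\{0\}}$ explains \emph{why} the twist by $\eta$ appears, which in the paper is only visible after the fact. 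What your version buys is transparency of the twist and no bookkeeping with $b_0$; what the paper's version buys is that it never has to invoke Poisson reduction in stages or the triviality of a $\gm$-torsor for a categorical quotient --- everything is an explicit formula on monomials. The one checkpoint you should make explicit is your second-stage moment map equation: the kernel of $\Z^{E'}\to N$ is generated over $\Lambda$ by $(b,b_0)$ with $b_0$ possibly greater than $1$, so the extra defining equation of $\mu_{\ba'}^{-1}(0)$ is $b_0z_0w_0+\sum_eb_ez_ew_e$, which agrees with your $z_0w_0-\langle\mu,\eta\rangle$ on $\mu_\ba^{-1}(0)$ only after noting that $\sum_eb_ez_ew_e=-b_0\langle\mu,\eta\rangle$ there and cancelling the factor of $b_0$; this is exactly where the paper's integer $b_0$ does its work, and it is also where your slice argument quietly uses that the antidiagonal cocharacter $(-\eta,1)$ is primitive. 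None of this is a gap --- it all goes through --- but it is the ``bookkeeping in the last stage'' you flagged, and it deserves a line.
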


\begin{proof}
Let $b_0$ be the smallest positive integer such that $b_0\eta$ is in the integer span of $\ba$.
Then there exists a primitive element $b\in \Z^E$ such that
$$b_0\eta + \sum_{e\in E}b_e\ba_e = 0.$$
The vanishing ideal of $\mu_{\ba'}^{-1}(0)\subset S(E')_{u'}$ is generated by the vanishing
ideal of $\mu_{\ba}^{-1}(0)\subset S(E)_u$ along with the additional element $$b_0z_0w_0 + \sum_{e\in E}b_ez_ew_e.$$
Likewise, the subgroup $K'\subset \gm^{E'}$ is generated by the subgroup $K\times\{1\}\subset \gm^E \times\{1\}\subset \gm^{E'}$
along with the image of the element $b'\in\Z^{E'}\cong\Hom(\gm,\gm^{E'})$ obtained by appending $b_0$ to $b$.

Consider a pair of elements $(r,s)\in\Z^E$ such that $r_e<0$ only if $u_e = +$ and $s_e<0$ only if $u_e = -$.
Then $$f_{(r,s)} := \prod_{e\in E}z_e^{r_e}w_e^{s_e}$$ is an element of $k[S(E)_u]$, and such monomials form
a basis for the ring.  The $\gm^E$-weight of $f_{(r,s)}$ is equal to $s-r\in\Z^E\cong\Hom(\gm^E,\gm)$, and so
the weight of $f_{(r,s)}$ under the action of $\gm$ via the map $c:\gm\to\gm^E$ is equal to the dot product $\ell_{(r,s)} := b\cdot(s-r)$.
If $f_{(r,s)}$ is $K$-invariant, then $\ell_{(r,s)}$ is a multiple of $b_0$.
We may now define an $T\times\gm$-equivariant Poisson isomorphism from $k[Y(\ba,u)[\eta]]$ to $k[Y(\ba', u')]$ by sending 
a $K$-invariant monomial $f_{(r,s)}$ to $z_0^{\ell_{(r,s)}/b_0}f_{(r,s)}$.
\end{proof}


\begin{corollary}\label{yau}
If $Z(\ba, u) = Z(\ba', u')$, then
$Y(\ba, u)\cong Y(\ba', u')$ as Poisson $T\times\gm$-varieties.
\end{corollary}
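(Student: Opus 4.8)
The plan is to reduce the statement to the two elementary moves described in Remark \ref{modify}. That remark guarantees that whenever $Z(\ba,u) = Z(\ba',u')$, the pair $(\ba',u')$ can be obtained from $(\ba,u)$ by a finite sequence of moves of two kinds: (i) a signed permutation of $E$, and (ii) appending to $\ba$ a collection of primitive vectors that sum to zero, while appending a $+$ to $u$ in each new coordinate (together with the inverses of these moves). Since ``isomorphic as Poisson $T\times\gm$-varieties'' is an equivalence relation, it suffices to exhibit, for each single move, a Poisson $T\times\gm$-isomorphism between the varieties attached to the two pairs; composing these gives the corollary.

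For move (ii) I would apply Lemma \ref{append} one vector at a time. If the appended collection is $\eta_1,\dots,\eta_k$ with $\sum_i\eta_i = 0$, then appending $\eta_1$ produces $Y(\ba,u)[\eta_1]$, appending $\eta_2$ produces $Y(\ba,u)[\eta_1][\eta_2]$, and so on, yielding $Y(\ba,u)[\eta_1]\cdots[\eta_k]$ after all $k$ steps. Directly from the definition of the twist, $(t,s)$ acts on $Y[\eta][\eta']$ exactly as $\bigl((\eta+\eta')(s)^{-1}t,s\bigr)$ acts on $Y$, so twisting is additive, $Y[\eta][\eta'] = Y[\eta+\eta']$, with $Y[0] = Y$. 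Hence the total twist is by $\eta_1+\cdots+\eta_k = 0$ and we recover $Y(\ba,u)$ on the nose; the inverse move is handled by reading the isomorphism backwards. At each intermediate step the configuration remains primitive spanning (appending vectors cannot destroy spanning, and the appended vectors are primitive), so Lemma \ref{append} does apply throughout.

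For move (i) I would reuse the construction in the proof of Proposition \ref{ya}: a signed permutation of $E$ induces a $\gm$-equivariant Poisson automorphism of $S(E)$ that permutes the coordinate pairs and, in each coordinate carrying a sign, sends $z_e\mapsto w_e$ and $w_e\mapsto -z_e$. The only thing to add beyond that proof is compatibility with the sign vector, namely that this automorphism carries $S(E)_u$ to the localization attached to $u'$. This is immediate, since a sign flip at $e$ converts $u_e=+$ (a factor $z_e$ in $f_u$) to $u'_e=-$ (a factor $w_e$) and vice versa, so the automorphism sends $f_u$ to $\pm f_{u'}$ and hence the localization to the correct one. Exactly as in Proposition \ref{ya}, the automorphism intertwines the $K$- and $K'$-actions and preserves the zero level of $\mu_\ba$, so it descends to the desired $T\times\gm$-equivariant Poisson isomorphism.

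Because this is a corollary, there is no single hard obstacle: the genuinely substantive input is Remark \ref{modify}, which certifies that these two moves generate all coincidences $Z(\ba,u)=Z(\ba',u')$. Granting that, the remainder is bookkeeping, and the two points deserving a moment's care are the additivity of the twist in move (ii) (which is what makes the telescoping $\sum_i\eta_i=0$ collapse the composite to the identity twist) and the $f_u$-compatibility in move (i).
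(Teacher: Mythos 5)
Your proposal is correct and follows essentially the same route as the paper: reduce to the two elementary moves of Remark \ref{modify}, handle signed permutations via the automorphism from Proposition \ref{ya}, and handle appended zero-sum collections by iterating Lemma \ref{append} and noting that the twists telescope to the trivial twist. The paper's own proof is just a terser version of this; your added checks (additivity of the twist and the compatibility of the signed-permutation automorphism with the localization $S(E)_u$) are exactly the details it leaves implicit.
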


\begin{proof}
If $Z(\ba, u) = Z(\ba', u')$, then the pair $(\ba, u)$ is related to the pair $(\ba', u')$ by a sequence of modifications
consisting of signed permutations and appending
and deleting signed sets of vectors that add to zero.  The fact that this does not change the $T\times\gm$-equivariant
Poisson isomorphism type of the variety then follows from Proposition \ref{ya} and Lemma \ref{append}.
\end{proof}

Given an integral zonotope $Z\subset N_\R$, we define $Y(Z)$ to be the Poisson $T\times\gm$-variety
$Y(\ba,u)$ for any primitive vector configuration $\ba\in N^E$ and sign vector $u\in\{+,-,0\}^E$ such that
$Z(\ba,u) = Z$; Corollary \ref{yau} tells us that this is well defined.  
The following corollaries of Proposition \ref{include} and Lemma \ref{append} are immediate.

\begin{corollary}\label{open}
If $Z\subset N_\R$ is an integral zonotope and $F$ is a face of $Z$, then there is a natural open inclusion
of $Y(F)$ into $Y(Z)$.
\end{corollary}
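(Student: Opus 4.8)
The plan is to compute both $Y(Z)$ and $Y(F)$ from a single primitive spanning configuration and then apply Proposition \ref{include} twice. First I would use Remark \ref{psa} to choose a primitive spanning configuration $\ba\in N^E$ together with a sign vector $u_Z\in\cV^*(\ba)$ realizing $Z$ as a face of the centered zonotope $Z(\ba)$, i.e. $Z=Z(\ba,u_Z)$; by Corollary \ref{yau} (which rests on Lemma \ref{append}) the variety $Y(Z)$ is independent of this realization, so $Y(Z)\cong Y(\ba,u_Z)$. Since $F$ is a face of $Z$ and $Z$ is a face of $Z(\ba)$, the zonotope $F$ is itself a face of $Z(\ba)$; hence $F=Z(\ba,u_F)$ for some $u_F\in\cV^*(\ba)$, and $Y(F)\cong Y(\ba,u_F)$. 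The face relation $F\subseteq Z$ forces $u_F$ to agree with $u_Z$ on the support of $u_Z$: writing $\bar\ba$ for the restriction of $\ba$ to $\{e\mid u_{Z,e}=0\}$, the zonotope $Z$ is a translate of $Z(\bar\ba)$, and each of its faces is obtained from a covector of $\bar\ba$ by reinstating the nonzero entries of $u_Z$. Thus $u_F$ refines $u_Z$.

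The key observation is then that $f_{u_Z}$ divides $f_{u_F}$ in $k[S(E)]$, since wherever $u_Z$ is nonzero $u_F$ carries the same sign. Consequently $S(E)_{u_F}\subseteq S(E)_{u_Z}\subseteq S(E)$ as open subsets, and after intersecting with $\mu_\ba^{-1}(0)$ and taking the affine quotient by $K$ we obtain natural $T\times\gm$-equivariant Poisson morphisms $Y(\ba,u_F)\to Y(\ba,u_Z)\to Y(\ba)$ whose composite is exactly the morphism of Proposition \ref{include}. By that proposition both $Y(\ba,u_F)\to Y(\ba)$ and $Y(\ba,u_Z)\to Y(\ba)$ are open inclusions of Poisson varieties, with images open subsets $U_F$ and $U_Z$. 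Moreover $U_F\subseteq U_Z$: a $K$-orbit meeting $S(E)_{u_F}\cap\mu_\ba^{-1}(0)$ a fortiori meets $S(E)_{u_Z}\cap\mu_\ba^{-1}(0)$. Identifying $Y(\ba,u_Z)$ with $U_Z$ via the second open inclusion, the morphism $Y(\ba,u_F)\to Y(\ba,u_Z)$ becomes the inclusion of the open subscheme $U_F\subseteq U_Z$, hence is itself an open inclusion. Because all of these identifications are the canonical ones supplied by Corollary \ref{yau}, the resulting inclusion $Y(F)\hookrightarrow Y(Z)$ is independent of $\ba$ and is therefore natural. (When $u_Z=0$, so that $Z=Z(\ba)$ is centered and full-dimensional, this is precisely Proposition \ref{include}.)

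The step I expect to require the most care is the passage from ``two open inclusions into $Y(\ba)$ with nested images'' to ``an open inclusion between the two sources.'' The genuine content there is supplied by Proposition \ref{include}: affine quotients of nested open subsets need not be related by an open immersion, because the locus of closed $K$-orbits can change, and it is exactly Proposition \ref{include} that rules this out by identifying each $Y(\ba,u)$ with an honest open subscheme of $Y(\ba)$. Granting that input, the remainder is the formal fact that if $j_F=j_Z\circ g$ with $j_F$ and $j_Z$ open immersions into a common target, then $g$ is an open immersion. The only other point needing care is the combinatorial claim that $u_F$ refines $u_Z$, for which I would invoke the description of the faces of $Z(\ba,u_Z)$ as a translate of the faces of $Z(\bar\ba)$ together with the fact that a face of a face is a face.
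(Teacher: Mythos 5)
Your argument is sound from the second paragraph onward, but it has a genuine gap at the very first step: you cannot always ``choose a primitive spanning configuration $\ba\in N^E$ together with a sign vector $u_Z\in\cV^*(\ba)$ realizing $Z$ as a face of the centered zonotope $Z(\ba)$.'' Remark \ref{psa} only guarantees a presentation $Z=Z(\ba,u)$ for \emph{some} sign vector $u$; it does not say $u$ can be taken to be a covector, and in general it cannot. Concretely, take $N=\Z$ and $Z=[0,2]=[-1,1]+1$: every centered integral zonotope in $\R$ is an interval $[-r,r]$, whose faces are itself and the two points $\{\pm r\}$, so $[0,2]$ is not a face of any $Z(\ba)$. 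More generally, whenever the translation vector of $Z$ lies in the linear span of its edge directions (and is nonzero), no presentation with $u_Z\in\cV^*(\ba)$ exists, because for $u=\sign(m(\ba))$ the translation part $\sum_{u_e=+}\ba_e-\sum_{u_e=-}\ba_e$ pairs strictly positively with $m$ while the edge directions of $Z(\ba,u)$ all lie in $\ker m$. So your proof covers only those $Z$ that happen to be faces of centered zonotopes, not arbitrary integral zonotopes.

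The missing ingredient is exactly the one the paper cites alongside Proposition \ref{include}, namely Lemma \ref{append} (equivalently Corollary \ref{twist}). Write $Z=Z_0+\eta$ with $Z_0=Z(\ba)$ centered, so that $F=F_0+\eta$ with $F_0=Z(\ba,v)$ a face of $Z_0$ for some $v\in\cV^*(\ba)$. Proposition \ref{include} gives the open inclusion $Y(F_0)\hookrightarrow Y(Z_0)$ directly (no nesting argument is needed once $Z$ is centered), and since $Y[\eta]$ has the same underlying variety as $Y$, twisting both sides by $\eta$ and invoking $Y(Z_0+\eta)\cong Y(Z_0)[\eta]$, $Y(F_0+\eta)\cong Y(F_0)[\eta]$ yields the open inclusion $Y(F)\hookrightarrow Y(Z)$. (Alternatively, one can keep your presentation $Z=Z(\ba,u)$ with $u$ arbitrary and rerun the proof of Proposition \ref{include} relative to $Y(\ba,u)$, using that a face of $Z(\ba,u)$ has the form $Z(\ba,\hat v)$ with $v\in\cV^*(\bar\ba)$; the same covector argument applies with $\bar\ba$ in place of $\ba$.) Your remaining steps --- the divisibility $f_{u_Z}\mid f_{u_F}$, the nesting of images, and the formal deduction that $g$ is an open immersion when $j_F=j_Z\circ g$ with $j_F,j_Z$ open immersions --- are all correct, but they only become relevant after the presentation problem is fixed.
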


\begin{corollary}\label{twist}
If $Z\subset N_\R$ is an integral zonotope and $\eta\in N$, then $Y(Z+\eta) \cong Y(Z)[\eta]$.
\end{corollary}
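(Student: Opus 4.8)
The plan is to realize the translation $Z \mapsto Z + \eta$ at the level of configurations as the operation of appending a single vector, so that the statement becomes a direct consequence of Lemma \ref{append}. First I would invoke Remark \ref{psa} to write $Z = Z(\ba, u)$ for some primitive spanning configuration $\ba \in N^E$ and sign vector $u \in \{+,-,0\}^E$, so that $Y(Z) = Y(\ba, u)$ by definition. Assuming for the moment that $\eta$ is a nonzero primitive vector, I would set $E' = E \sqcup \{0\}$, let $\ba' \in N^{E'}$ be obtained by appending $\eta$ to $\ba$, and let $u' \in \{+,-,0\}^{E'}$ be obtained by appending $+$ to $u$; since $\ba$ already spans $N_\R$ and $\eta$ is primitive, $\ba'$ is again a primitive spanning configuration.

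The core observation is the zonotope identity $Z(\ba', u') = Z + \eta$. This is immediate from the defining formula for $Z(\ba', u')$: the new index $0$ carries sign $u'_0 = +$, so it contributes exactly the summand $+\ba'_0 = \eta$, while the remaining summands reproduce $Z(\ba, u) = Z$. With this identity in hand, the left-hand side $Y(Z + \eta)$ equals $Y(\ba', u')$ by the definition of $Y(-)$ on zonotopes (well-definedness being guaranteed by Corollary \ref{yau}), and Lemma \ref{append} identifies $Y(\ba', u')$ with $Y(\ba, u)[\eta] = Y(Z)[\eta]$ as Poisson $T \times \gm$-varieties. This settles the case of primitive $\eta$.

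The one point requiring care --- and what I would flag as the main obstacle --- is that Lemma \ref{append} applies only when appending $\eta$ leaves the configuration primitive spanning, that is, only for primitive $\eta$ (the case $\eta = 0$ being trivial, as both sides are literally $Y(Z)$). To reach an arbitrary $\eta \in N$, I would use the additivity of the twist: unwinding the definition of $Y[\cdot]$ shows that $(Y[\eta_1])[\eta_2] = Y[\eta_1 + \eta_2]$, since on the former $(t,s)$ acts through $(\eta_1(s)^{-1}\eta_2(s)^{-1} t, s) = ((\eta_1 + \eta_2)(s)^{-1} t, s)$. Writing any nonzero $\eta$ as a sum $\eta_1 + \cdots + \eta_k$ of primitive vectors and applying the primitive case along the chain of translations $Z,\ Z + \eta_1,\ \dots,\ Z + \eta$ then yields the general isomorphism $Y(Z + \eta) \cong Y(Z)[\eta]$. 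This reduction is routine, so the whole argument is as immediate as the surrounding text advertises.
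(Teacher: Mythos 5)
Your proof is correct and takes essentially the same route as the paper, which presents the corollary as an immediate consequence of Lemma \ref{append} via precisely the identity $Z(\ba',u') = Z(\ba,u)+\eta$. Your additional care about non-primitive $\eta$ --- reducing to the primitive case by writing $\eta$ as a sum of primitive vectors and using additivity of the twist --- addresses a point the paper leaves implicit, and you handle it correctly.
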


\begin{example}\label{kleinian}
Suppose that $N = \Z$ and $Z = [-r,r]$ for some positive integer $r$.  
Then we may take $E = [r]$ and write $Z = Z(\ba)$, where $\ba = (1,\ldots,1)\in N^E$, and $K\subset \gm^E$
is the kernel of the determinant map to $\gm$.  We then compute
$$k[Y(\ba)] = k[z_1w_1,\ldots,z_rw_r,z_1\cdots z_r, w_1\cdots w_r]/\langle z_iw_i - z_{i+1}w_{i+1}\mid 1\leq i\leq r-1\rangle.$$
Setting $a = z_iw_i$, $b=z_1\cdots z_r$, and $c=w_1\cdots w_r$, we may express this as
$$k[Y(\ba)] = k[a,b,c]/\langle a^r - bc\rangle,$$
where the $T\times\gm$-weights of $a$, $b$, and $c$ are $(0,2)$, $(-1,r)$, and $(1,r)$, respectively.
The Poisson bracket is given by $\{a,b\} = -b$, $\{a,c\} = c$, and $\{b,c\} = ra^{r-1}$.  This is precisely
the Kleinian singularity of type $A_{r-1}$.
\end{example}

The following proposition is straightforward from the definitions.

\begin{proposition}\label{direct sum}
Suppose that $N = N'\oplus N''$ and $Z = Z'\times Z''$ for a pair of integral zonotopes $Z'\subset N_\R'$
and $Z''\subset N_\R''$.  Then $Y(Z)\cong Y(Z')\times Y(Z'')$ as Poisson $T'\times T''\times\gm$-varieties,
where $\gm$ acts diagonally on $Y(Z')\times Y(Z'')$.
\end{proposition}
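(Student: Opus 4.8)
The plan is to choose the combinatorial data defining $Y(Z)$ so that it visibly respects the product decomposition, and then to observe that every ingredient of the construction factors across the two tensor factors. First I would write $Z'=Z(\ba',u')$ and $Z''=Z(\ba'',u'')$ for primitive spanning configurations $\ba'\in(N')^{E'}$, $\ba''\in(N'')^{E''}$ and sign vectors $u'$, $u''$, and then set $E:=E'\sqcup E''$, $u:=(u',u'')$, and let $\ba\in N^E$ be the configuration obtained by viewing each $\ba'_e$ and $\ba''_e$ as an element of $N=N'\oplus N''$. Since $N'$ and $N''$ are direct summands, a primitive vector of $N'$ remains primitive in $N$, so $\ba$ is again a primitive spanning configuration, and a direct computation of $Z(\ba,u)$ (splitting the defining sum into its $E'$ and $E''$ parts) gives $Z(\ba,u)=Z'\times Z''=Z$. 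By Corollary \ref{yau} we may use this data to compute $Y(Z)$.

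With this choice, I would record the factorizations $S(E)=S(E')\times S(E'')$, $\gm^E=\gm^{E'}\times\gm^{E''}$, and $T=T'\times T''$ (the latter because $N=N'\oplus N''$). The homomorphism $\gm^E\to T$ induced by $\ba$ is the product of the two homomorphisms $\gm^{E'}\to T'$ and $\gm^{E''}\to T''$, whence $K=K'\times K''$. The moment map $\mu_E$ is a product, and since $\Lie(K)=\Lie(K')\oplus\Lie(K'')$ the map $\mu_\ba$ is identified with $(\mu_{\ba'},\mu_{\ba''})$, giving $\mu_\ba^{-1}(0)=\mu_{\ba'}^{-1}(0)\times\mu_{\ba''}^{-1}(0)$. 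Likewise $f_u=f_{u'}\cdot f_{u''}$ as functions on the product, so $S(E)_u=S(E')_{u'}\times S(E'')_{u''}$. Therefore the locus $S(E)_u\cap\mu_\ba^{-1}(0)$ whose invariant ring defines $Y(\ba,u)$ is the product of the corresponding loci for the two factors.

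The crux is then that taking $K=K'\times K''$-invariants of a product commutes with the tensor product of coordinate rings: a monomial in the $z_e,w_e$ is $K$-invariant if and only if its $E'$-part is $K'$-invariant and its $E''$-part is $K''$-invariant, and such monomials span, so $k[Y(\ba,u)]\cong k[Y(\ba',u')]\otimes k[Y(\ba'',u'')]$. This identifies $Y(Z)$ with $Y(Z')\times Y(Z'')$ as a variety. Finally I would check the three structures are compatible. The $T=T'\times T''$-action is the product of the two factor actions, by the decomposition $K=K'\times K''$; the single $\gm$ acts by scaling all $z_e,w_e$ by the same $s$ on both factors, which is precisely the diagonal action; and the symplectic form $\sum_{e\in E}z_e\wedge w_e$ is the direct sum of the forms on $S(E')$ and $S(E'')$, so the induced Poisson bracket is the product bracket (functions pulled back from different factors Poisson-commute). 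I do not anticipate a serious obstacle here---the proposition is essentially bookkeeping---but the one point requiring a line of justification is the commutation of invariants with the tensor product, which is immediate for a torus acting diagonally on a polynomial ring since the invariants are spanned by invariant monomials.
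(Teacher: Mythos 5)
Your argument is correct and is precisely the verification the paper leaves implicit: the paper offers no proof beyond the remark that the proposition is ``straightforward from the definitions,'' and your bookkeeping (choosing $\ba=\ba'\sqcup\ba''$, $u=(u',u'')$ so that $K=K'\times K''$, $\mu_\ba^{-1}(0)$ and $S(E)_u$ factor, and invariants commute with the tensor product for the diagonalizable group $K'\times K''$) is exactly what that remark is pointing to. No discrepancies with the paper's intended route.
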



\begin{example}\label{zero}
Consider the integral zonotope $\{0\}\subset \R$.
We may take $E = [2]$ and write $\{0\} = Z(\ba,u)$, where $\ba = (1,-1)\in \Z^E$ and $u = (+,+)\in\{+,-,0\}^E$.
Then $K\cong\gm$ and $$k[S(E)_u] = k[z_1,z_2,w_1,w_2,z_1^{-1},z_2^{-1}],$$
where $z_i$ has $K$-weight 1 and $w_i$ has $K$-weight -1.  We have $\mu_\ba = z_1w_1 + z_2w_2$,
so $$k[Y(\ba)] = k[z_iw_j, z_1z_2^{-1}, z_1^{-1}z_2]/\langle z_1w_1+z_2w_2\rangle.$$
Setting $a = z_1w_1$ and $b = z_1z_2^{-1}$, we see that this ring is simply isomorphic to $k[a,b,b^{-1}]$,
where the $T\times\gm$ weights of $a$ and $b$ are $(0,2)$ and $(-1,0)$, respectively.
Thus $Y(\{0\})$ is isomorphic to $T^*T\cong T\times\Lie(T)^*$, where $b$ is a coordinate on $T$ and $a$ is a coordinate on
$\Lie(T)^*$.  Here $T$ acts on itself by translation and $\gm$ acts on $\Lie(T)^*$ with weight -2.
More generally, 
$Y(\{0\})\cong T^*T$ for any lattice $N$.
\end{example}

\begin{example}\label{sub}
Let $Z\subset N_\R$ be an integral zonotope centered at the origin.  Let $N'\subset N$
be the intersection of $N$ with the linear span of $Z$, so that $Z$ may also be interpreted
as an integral zonotope in $N'_\R$.
We wish to compare the corresponding varieties $Y_N(Z)$ and $Y_{N'}(Z)$.
Choose a complement $N''$ to $N'$, so that $N \cong N'\oplus N''$.
Then $Z = Z\times\{0\}$, so Proposition \ref{direct sum} and Example \ref{zero} combine to tell us
that $Y_N(Z) \cong Y_{N'}(Z)\times T^*T''$.
\end{example}

\begin{proposition}\label{effective}
For any integral zonotope $Z\subset N_\R$, the action of $T$ on $Y(Z)$ is effective.
\end{proposition}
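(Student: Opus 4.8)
The plan is to check effectiveness not on all of $Y(Z)$ at once, but on a conveniently chosen nonempty $T$-stable open subvariety. An element $t\in T$ lies in the kernel of the action on $Y(Z)$ precisely when it acts as the identity on every point of $Y(Z)$; if $U\subset Y(Z)$ is a $T$-stable open subvariety, then any such $t$ also acts as the identity on $U$, so it suffices to produce a single nonempty $T$-stable open $U\subset Y(Z)$ on which $T$ acts effectively. The open subvarieties supplied by Corollary \ref{open} are exactly of this kind: for every face $F$ of $Z$ we have a $T\times\gm$-equivariant open inclusion $Y(F)\hookrightarrow Y(Z)$. I would choose $F$ to be a vertex, where the variety $Y(F)$ is completely transparent.

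Concretely, since $Z$ is a nonempty compact convex integral polytope it has at least one vertex, and every vertex $v$ of an integral zonotope is a lattice point of $N$. Regarding the $0$-dimensional face $F=\{v\}$ as an integral zonotope in $N_\R$, Example \ref{zero} (in the form valid for an arbitrary lattice $N$) identifies $Y(\{0\})$ with $T^*T\cong T\times\Lie(T)^*$, on which $T$ acts by translation on the first factor; Corollary \ref{twist} then gives a $T$-equivariant isomorphism $Y(\{v\})\cong Y(\{0\})[v]\cong T^*T$. Since the translation action of a torus on itself is effective, $T$ acts effectively on $Y(F)$.

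It remains only to assemble these observations. By Corollary \ref{open} the inclusion $Y(F)\hookrightarrow Y(Z)$ is a $T$-equivariant open immersion, and $Y(F)$ is nonempty. Hence if $t\in T$ acts as the identity on $Y(Z)$, then it acts as the identity on the open subvariety $Y(F)$; as the $T$-action on $Y(F)\cong T^*T$ is effective, this forces $t$ to be the identity. Therefore $T$ acts effectively on $Y(Z)$.

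I do not anticipate a serious obstacle: the only points requiring care are the elementary facts that every integral zonotope has a lattice-point vertex and that the open inclusion of Corollary \ref{open}, being a restriction of the $\gm^E\times\gm$-action, is genuinely $T$-equivariant, together with the bookkeeping that the torus $T$ appearing in Example \ref{zero} is the \emph{same} torus (with cocharacter lattice the full $N$) as the one acting on $Y(Z)$. One could alternatively argue directly that the $T$-weights occurring in $k[Y(\ba)]$ generate $N^*$, but verifying that the obvious weight-$m$ monomials do not vanish on $\mu_\ba^{-1}(0)$ is more delicate than the geometric argument above, which is why I would favor passing to a vertex.
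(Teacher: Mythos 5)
Your proof is correct and is essentially identical to the paper's: both restrict to the open subvariety $Y(\{v\})$ for a vertex $v$ of $Z$ via Corollary \ref{open}, identify it with $T^*T$ using Corollary \ref{twist} and Example \ref{zero}, and conclude from effectiveness of the translation action of $T$ on itself. No changes needed.
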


\begin{proof}
Let $\eta$ be a vertex of $Z$.  By Corollary \ref{open}, $Y(\{v\})$ sits inside of $Y(Z)$ as an open
subvariety.  By Corollary \ref{twist}, $Y(\{v\})$ is $T$-equivariantly isomorphic to $Y(\{0\})$, which is
isomorphic to $T^*T$ by Example \ref{sub}.  Since $T$ acts effectively on $T^*T$, it acts effectively on $Y(Z)$.
\end{proof}

\begin{lemma}\label{cube}
The variety $Y(Z)$ is smooth if and only if $Z$ is a cube.  
\end{lemma}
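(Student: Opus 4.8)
The plan is to reduce to a central, full-dimensional zonotope and then detect the singularity of $Y(Z)$ at its cone point by means of the ambient Lawrence toric variety of Remark~\ref{lawrence}.

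First the reductions. By Corollary~\ref{twist} a translation changes $Y(Z)$ only by a twist, which preserves smoothness, so I may assume $Z$ is centered at the origin; and if $Z$ fails to span $N_\R$, Example~\ref{sub} gives $Y(Z)\cong Y_{N'}(Z)\times T^*T''$ with $T^*T''$ smooth, so $Y(Z)$ is smooth if and only if $Y_{N'}(Z)$ is. Thus I may assume $Z=Z(\ba)$ with $\ba$ primitive spanning and $n:=\operatorname{rk}N$, so that $\dim Y(\ba)=2n$ and $Z$ is a cube precisely when $\ba$ is a $\Z$-basis of $N$. If $\ba$ is a basis then the kernel $K$ of $\gm^E\to T$ is trivial, $Y(\ba)=S(E)\cong\bA^{2n}$ is smooth, and $Z$ is a cube; this is the ``if'' direction. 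It remains to show that if $\ba$ is not a basis then $Y(\ba)$ is singular.

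For the converse I work at the distinguished point $y_0\in Y(\ba)$, the image of $0\in S(E)$, which is the torus-fixed point of the affine Lawrence toric variety $X(\ba)=\spec k[S(E)]^K$. By Remark~\ref{lawrence}, $Y(\ba)$ is the subvariety of $X(\ba)$ cut out by the descended moment map, whose components are the linear forms $\sum_e\lambda_e x_e$ in $x_e:=z_ew_e$, one for each $\lambda$ in $\Lambda=\ker(\Z^E\to N)$, a lattice of rank $|E|-n$. Every such $\lambda$ is supported on the non-coloops of $\ba$, and for a non-coloop $e$ the monomial $x_e$ is irreducible in the semigroup of $K$-invariant monomials (it cannot factor, since $z_e$ and $w_e$ are themselves non-invariant), hence a distinct minimal generator of $k[X(\ba)]$. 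Therefore the differentials $dx_e|_{y_0}$ over non-coloops $e$ are linearly independent in the cotangent space $\mathfrak m_{y_0}/\mathfrak m_{y_0}^2$ of $X(\ba)$, the $|E|-n$ moment-map components impose independent linear conditions there, and
\[
\dim T_{y_0}Y(\ba)=\dim T_{y_0}X(\ba)-(|E|-n).
\]
So it suffices to prove that $X(\ba)$ is singular at $y_0$, i.e. $\dim T_{y_0}X(\ba)>|E|+n$, whenever $\ba$ is not a basis. This splits into two regimes. If $|E|>n$, the Lawrence cone $\sigma(\ba)\subset\tilde N_\R$ is pointed, full-dimensional of dimension $|E|+n$, and generated by the $2|E|$ extremal rays $-\rho_e^{\pm}$; since $2|E|>|E|+n$ it is not simplicial, so $X(\ba)$ is singular at $y_0$ and $\dim T_{y_0}Y(\ba)>2n$. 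If $|E|=n$ but $\ba$ is not unimodular, then $\Lambda=0$, $K$ is finite and nontrivial, and $Y(\ba)=X(\ba)=\bA^{2n}/K$; each nontrivial element of $K\subset\gm^E$ acts on every pair $(z_e,w_e)$ with reciprocal weights, hence has at least two non-unit eigenvalues, so $K$ contains no pseudoreflections and the quotient is singular at the origin. In either case $y_0$ is a singular point of $Y(\ba)$, completing the ``only if'' direction.

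The main obstacle is the middle step: checking that the non-coloop $x_e$ are genuinely distinct minimal generators (so the moment-map conditions are independent at $y_0$) and that the Lawrence cone really is non-simplicial, i.e. that all $2|E|$ rays $-\rho_e^\pm$ are extremal when $|E|>n$. A cleaner but less elementary route would avoid this toric bookkeeping entirely: once $Y(\ba)$ is known to be symplectic (Proposition~\ref{symp}), a smooth $Y(\ba)$ would be a smooth affine symplectic cone with contracting $\gm$-action, hence a linear symplectic $T$-representation of dimension $2n$, and effectiveness forces its weights to be $n$ opposite pairs spanning $N^*$ over $\Z$, returning $\ba$ to a basis. I would nonetheless favor the toric argument above, so as to keep the proof self-contained and independent of Proposition~\ref{symp}; the finite-quotient sub-case also deserves a remark in positive characteristic, where $K$ may fail to be reduced.
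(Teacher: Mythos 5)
Your proof is correct and follows essentially the same route as the paper: the same reductions via Corollary~\ref{twist} and Example~\ref{sub}, followed by detecting the singularity at the cone point through the Lawrence toric variety $X(\ba)$ and the fact that $T_0Y(\ba)$ sits inside $T_0X(\ba)$ with codimension $\dim K = |E|-\operatorname{rk}N$. You merely make explicit the two facts the paper delegates to ``the theory of toric varieties'' (non-simpliciality of the Lawrence cone when $|E|>\operatorname{rk}N$, and the nontrivial finite quotient when $|E|=\operatorname{rk}N$), and those justifications check out.
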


\begin{proof}
By Corollary \ref{twist}, we may assume that $Z$ is centered at the origin.
By Example \ref{sub}, we may reduce to the case where $\dim Z = \dim N_\R$, and therefore that $Z = Z(\ba)$
and $Y(Z) = Y(\ba)$ for some primitive spanning configuration $\ba\in N^E$.  
If $Z$ is a cube, then $\ba$ is a basis for $N$, and $Y(\ba) = S(E)$.
If $Z$ is not a cube, we will show that origin $0\in Y(\ba)$,
defined as the image of the origin in $S(E)$, is singular.  Indeed, we note that $Y(\ba)$ is a subvariety
of the Lawrence toric variety $X(\ba)$ (Remark \ref{lawrence}) of codimension equal to the dimension of $K$,
and the Zariski tangent space $T_0Y(\ba)$ is a subspace of $T_0X(\ba)$ of the same codimension.  
Since $\ba$ is not a basis for $N$, the theory of toric varieties tells us that $X(\ba)$ is singular at the origin,
thus so is $Y(\ba)$.
\end{proof}

The following lemma will be necessary for the proof of Proposition \ref{reg}.

\begin{lemma}\label{complement}
Suppose that $Z\subset N_\R$ is an integral zonotope with $\dim Z = \dim N_\R$.
Then $$Y(Z)\;\;\smallsetminus \bigcup_{\text{$F\subsetneq Z$ a face}} Y(F) \;\;=\;\; \{0\}.$$
\end{lemma}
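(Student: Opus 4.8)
The plan is to reduce to the centered case and then prove a sharper statement about the chart decomposition of $Y(\ba)$. By Corollary \ref{twist} we may translate $Z$ so that it is centered at the origin; since the translation isomorphism carries the face decomposition of $Z$ to itself, it suffices to treat $Z=Z(\ba)$ for a primitive spanning configuration $\ba\in N^E$. Then $Y(Z)=Y(\ba)$, the proper faces $F\subsetneq Z$ correspond to the nonzero covectors $u\in\cV^*(\ba)$ via $F=Z(\ba,u)$, and $0\in Y(\ba)$ is the image of the origin of $S(E)$. Throughout I use that a point of $Y(\ba)$ lies in the open subvariety $Y(\ba,u)$ (Proposition \ref{include}) exactly when its fiber under the quotient map $\mu_\ba^{-1}(0)\to Y(\ba)$ meets $S(E)_u$. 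I prove the two inclusions separately: that $0$ lies in none of the charts $Y(\ba,u)$ with $u\neq0$, and that every other point of $Y(\ba)$ lies in at least one such chart.

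For the first inclusion I argue exactly as in Proposition \ref{include}. The fiber over $0$ consists of those $(p,q)$ whose $K$-orbit closure contains the origin. If such a $(p,q)$ lay in $S(E)_u$ with $u\neq0$, write $u=\sign(m(\ba))$ for $m\in N^*$ and take $\lambda\in\Lambda$ with $\lim_{t\to0}\lambda(t)\cdot(p,q)=0$. Then the relation $\sum_e\lambda_e m(\ba_e)=0$ contains a strictly positive term, coming from a coordinate $e$ with $u_e\neq0$ (where $f_u$, and hence the corresponding coordinate, does not vanish), and therefore also a strictly negative term; but the sign conditions forcing the existence of the limit forbid such a term. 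This is the same bookkeeping as in Proposition \ref{include}, and it yields $0\notin Y(\ba,u)$.

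The substance of the lemma is the second inclusion. Given $y\neq0$, represent it by a point $(p,q)$ in the unique closed $K$-orbit over $y$, set $S^+=\{e\mid p_e\neq0\}$ and $S^-=\{e\mid q_e\neq0\}$, and look for a nonzero lattice point $m$ in the rational polyhedral cone
$$C:=\{m\in N^*_\R\mid m(\ba_e)\le0\ (e\notin S^+),\ m(\ba_e)\ge0\ (e\notin S^-)\}.$$
Any nonzero $m\in C\cap N^*$ gives $u=\sign(m(\ba))\in\cV^*(\ba)$ with $u\neq0$ and $(p,q)\in S(E)_u$, whence $y\in Y(\ba,u)$; so everything comes down to showing $C\neq\{0\}$. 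This is where I expect the main obstacle, and it is where the moment-map condition $\mu_\ba(p,q)=0$ must enter: the analogous purely toric statement on the Lawrence variety $X(\ba)$ is \emph{false}, so one cannot simply intersect a chart decomposition of $X(\ba)$ with $Y(\ba)$. I split on $B:=S^+\cap S^-$. If $B\neq\emptyset$, then $\mu_\ba(p,q)=0$ says that $(p_eq_e)_e$, which is supported exactly on $B$ and nonzero, lies in $\Lambda^\perp$; since $\Lambda^\perp$ is defined over $\Q$, a rank count independent of $k$ produces a nonzero \emph{rational} vector in $\Lambda^\perp$ supported on $B$, which obstructs surjectivity of the projection $\Lambda_\R\to\R^B$ and hence, by polar duality, forces $C\neq\{0\}$. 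If $B=\emptyset$, I use instead that the orbit is closed: closedness says that any $\lambda\in\Lambda_\R$ with $\lambda|_{S^+}\ge0$ and $\lambda|_{S^-}\le0$ vanishes on $S^+\cup S^-$, whereas $C=\{0\}$ would force $\{\ba_e\mid e\in S^+\}\cup\{-\ba_e\mid e\in S^-\}\cup\{\pm\ba_e\mid e\notin S^+\cup S^-\}$ to positively span $N_\R$, and a strictly positive relation among these vectors produces exactly such a forbidden $\lambda$. Either way $C\neq\{0\}$.

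Combining the two inclusions yields the claimed equality. The routine points I am deferring are the polar-duality translation of the condition $C\neq\{0\}$ into the surjectivity statement for $\Lambda_\R\to\R^B$, and the standard Gordan/Stiemke alternative used when $B=\emptyset$. The one genuinely delicate ingredient, and the heart of the argument, is the passage from the field $k$ to $\R$ in the case $B\neq\emptyset$: this is precisely what makes the moment-map hypothesis do real work, and it is the reason the statement holds for $Y(\ba)$ even though its toric shadow does not.
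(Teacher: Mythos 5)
Your argument is correct (up to a harmless sign-convention flip in which direction of $\lambda$ makes the limit exist), but it takes a genuinely different route from the paper's. The paper never touches individual orbits: it reduces the claim to showing that every $z_ew_e$ vanishes on the residual set $R(\ba)=\mu_\ba^{-1}(0)\cap V(f_u\mid u\in\cV^*(\ba)\smallsetminus\{0\})$, and proves this by pure commutative algebra --- the monomials $\prod_{u_e\neq 0}x_e$ generate the Stanley--Reisner ideal of the Gale dual $\cV(\ba)$, the linear forms $\sum\lambda_ex_e$ coming from $\mu_\ba=0$ are a linear system of parameters for that ring, and the resulting quotient is zero-dimensional, forcing each $z_ew_e$ to be nilpotent on $R(\ba)$. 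You instead work orbit by orbit: pick the closed $K$-orbit over $y$, and show via a convex-geometric alternative that the cone $C$ of covectors compatible with its support is nonzero, using the moment-map equation when $S^+\cap S^-\neq\emptyset$ (where the descent from $k$ to $\Q$ of the relation $(p_eq_e)_e\in\Lambda^\perp\otimes k$ is legitimate because both subspaces are defined over $\Q$) and using closedness of the orbit via Hilbert--Mumford plus a positive-spanning relation when $S^+\cap S^-=\emptyset$. The deferred steps (Hilbert--Mumford for torus actions, the Gordan/Stiemke alternative, the duality between $C\neq\{0\}$ and $\operatorname{cone}(G)\neq N_\R$) are all standard and do close up. What each approach buys: the paper's argument is shorter once one accepts the l.s.o.p.\ assertion, and it identifies the scheme-theoretic fiber over $0$ in one stroke; yours is longer but more transparent about \emph{why} the statement is true --- it pinpoints exactly where $\mu_\ba=0$ and closedness of orbits each enter, explains why the analogous statement fails for the Lawrence toric variety $X(\ba)$, and as a byproduct gives a clean combinatorial criterion (in terms of the support of a closed orbit) for which chart $Y(\ba,u)$ a given point lies in.
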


\begin{proof}
Write $Z = Z(\ba)$ for some primitive spanning configuration $\ba\in N^E$; then the faces of $Z$
are the zonotopes $Z(\ba,u)$ for sign vectors $u\in\cV^*(\ba)\subset \{+,-,0\}^E$.  
Such a face is proper if and only if $u\neq 0$.
The complement of $Y(\ba,u)$ in $Y(\ba)$ is equal to the image in $Y(\ba)$ of the complement of $S(E)_u$
in $S(E)$ intersected with the zero level of $\mu_\ba$.
We have
$$R(\ba) \;\;:=\;\; \mu_\ba^{-1}(0)\;\;\;\cap \bigcap_{u\in\cV^*(\ba)\smallsetminus\{0\}} \Big(S(E)\smallsetminus S(E)_u\Big)
\;=\; \mu_\ba^{-1}(0)\;\cap\; V(f_u\mid u\in\cV^*(\ba)\smallsetminus\{0\}),$$
and we need to show that $R(\ba)$ is equal to the preimage in $\mu_\ba^{-1}(0)$ of $0\in Y(\ba)$.
In other words, we need to show that $R(\ba)$ is equal to the vanishing locus in $\mu_{\ba}^{-1}(0)$ of the set
of all nonconstant $K$-invariant monomials in $k[S(E)]$.

For any element $r\in\Z^E$, let $$f_r := \prod_{r_e>0}z_e^{r_e}\cdot\prod_{r_e<0}w_e^{r_e}.$$
Then $f_r$ is $K$-invariant if and only if $r\in\Lambda^\perp$, and the ring of $K$-invariant polynomials
is generated by $\{z_ew_e\mid e\in E\}\cup\{f_r\mid r\in \Lambda^\perp\}$.
We recall that 
$$\cV^*(\ba) = \{\sign(m(\ba))\mid m\in N^*\} = \{\sign(r)\mid r\in \Lambda^\perp\},$$ 
so the vanishing locus of the set $\{f_u\mid u\in\cV^*(\ba)\smallsetminus\{0\}\}$
is equal to the vanishing locus of the set $\{f_r\mid r\in \Lambda^\perp\smallsetminus\{0\}\}$.
Hence we only need to show that each $z_ew_e$ vanishes on $R(\ba)$.

Consider the map from $k[x_e\mid e\in E]$ to $k[R(\ba)]$
taking $x_e$ to $z_ew_e$.  The kernel of this map is generated by two families of elements, namely
$$\prod_{u_e\neq 0} x_e\;\;\;\text{for all $u\in\cV^*(\ba)\smallsetminus\{0\}$}
\and
\sum_{e\in E}\lambda_e x_e\;\;\;\text{for all $\lambda\in \Lambda$}.$$
The first family of elements generate the Stanley-Reisner ideal of $\cV(\ba)$, the Gale dual of $\cV^*(\ba)$,
and the second family of elements form a linear system of parameters for the Stanley-Reisner ring.  
This means that the spectrum of the image of this
map has dimension zero, which in turn means that $z_ew_e=0$ vanishes on $R(\ba)$ for all $e\in E$.
\end{proof}

\begin{proposition}\label{reg} The regular locus of $Y(Z)$ is equal to 
$\displaystyle\bigcup_{\substack{\text{$F$ a face of $Z$}\\\text{$F$ a cube}}} Y(F)$. 
\end{proposition}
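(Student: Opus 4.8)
The plan is to prove the two inclusions separately, the first being immediate and the second requiring a reduction of an arbitrary point to the cone point of a centered, full-dimensional subzonotope.

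For the inclusion $\supseteq$, let $F$ be a face of $Z$ that is a cube. By Lemma \ref{cube} the variety $Y(F)$ is smooth, and by Corollary \ref{open} it is an open subvariety of $Y(Z)$. Since regularity is local, a smooth open subvariety consists of regular points of the ambient variety, so $Y(F)\subseteq Y(Z)^{\reg}$; taking the union over all cube faces gives this inclusion with no further work.

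For the reverse inclusion I would fix a regular point $y\in Y(Z)^{\reg}$ and let $F_0$ be minimal among the faces $F$ of $Z$ with $y\in Y(F)$ (this set is nonempty, as it contains $Z$ itself). Because every proper face of $F_0$ is also a face of $Z$, minimality places $y$ in the open stratum $Y(F_0)\smallsetminus\bigcup_{F'\subsetneq F_0}Y(F')$, and since $Y(F_0)$ is open in $Y(Z)$ (Corollary \ref{open}) the point $y$ is regular in $Y(F_0)$ as well. I would then normalize $F_0$: after translating it to be centered at the origin (Corollary \ref{twist}) I may write the translate as $Z(\bar\ba)$ for a primitive configuration $\bar\ba$ spanning a subspace $N'_\R$, and Example \ref{sub} (via Proposition \ref{direct sum} and Example \ref{zero}) supplies an identification $Y(F_0)\cong Y_{N'}(\bar\ba)\times T^*T''$ under which the faces of $F_0$ correspond to products of faces of $Z(\bar\ba)$ with the unique face of $\{0\}$. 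As $Z(\bar\ba)$ is full-dimensional in $N'_\R$, Lemma \ref{complement} identifies the open stratum with $\{0\}\times T^*T''$, so $y$ corresponds to a point $(0,t)$ and is regular if and only if the cone point $0\in Y_{N'}(\bar\ba)$ is regular.

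It then remains to show that $0\in Y_{N'}(\bar\ba)$ is regular precisely when $\bar\ba$ is a lattice basis of $N'$, i.e. exactly when $F_0$ is a cube. If $\bar\ba$ is a basis then $Y_{N'}(\bar\ba)=S(E')$ is smooth everywhere, so $0$ is regular; in this case $F_0$ is a cube face and $y\in Y(F_0)$ lies in the desired union. The case where $\bar\ba$ is not a basis is where I expect the only genuine obstacle to lie, and I would dispose of it as follows. Because the sign vector here is $0$ (no coordinate has been inverted), every generator $z_e,w_e$ of $k[S(E')]$ has $\gm$-weight $1$, so $k[Y_{N'}(\bar\ba)]$ is non-negatively graded with only constants in degree $0$; hence $Y_{N'}(\bar\ba)$ is an affine cone with vertex $0$, and $0$ lies in the closure of every $\gm$-orbit. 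By Lemma \ref{cube} the singular locus of $Y_{N'}(\bar\ba)$ is nonempty, and it is closed and $\gm$-invariant since $\gm$ acts by automorphisms; being a nonempty closed invariant set, it must contain the attracting point $0$, so $0$ is singular. This contradicts the regularity of $y$, so this case cannot occur, and therefore $F_0$ is a cube and $y\in\bigcup_{F\text{ cube}}Y(F)$. The sole nontrivial input is this last cone argument, which pins the singularity of a non-cube full-dimensional zonotope to its cone point; alternatively one could simply cite the tangent-space computation already carried out in the proof of Lemma \ref{cube}.
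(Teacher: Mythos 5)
Your proof is correct and follows essentially the same route as the paper's: the easy inclusion via Lemma \ref{cube} and Corollary \ref{open}, then reduction of a regular point to the cone point of a full-dimensional centered face via Corollary \ref{twist}, Example \ref{sub}, and Lemma \ref{complement}, and finally the singularity of the origin for a non-cube. Your closing $\gm$-invariance argument (nonempty closed invariant locus in an affine cone must contain the vertex) is a valid, slightly more self-contained way to pin the singularity to $0$, but as you note the paper simply cites the tangent-space computation already done in the proof of Lemma \ref{cube}.
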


\begin{proof} If $F$ is a cube, then the open subscheme $Y(F)\subset Y(Z)$ is smooth by Example \ref{sub}
and Lemma \ref{cube}, therefore it is contained in the regular locus of $Y(Z)$.  Conversely, let $y\in Y(Z)$ be given
and let $F$ be the smallest face of $Z$ such that $y\in Y(F)$.
We need to show that, if $F$ is not a cube, then $y$ is a singular point of $Y(F)$.  By Proposition \ref{sub},
we may reduce to the case where $F=Z$ and $\dim Z = \dim N_\R$.  By Lemma \ref{complement}, this implies
that $y=0$.  But we showed in the proof of Lemma \ref{cube} that, when $Z$ is not a cube, the origin is a singular
point of $Y(Z)$.
\end{proof}

We will eventually prove that the Poisson variety $Y(Z)$ is symplectic for any integral zonotope $Z\in N_\R$.
We do not yet have the tools to prove this, but we can prove it for parallelotopes, which we will later use to prove the
general case.

\begin{proposition}\label{paranormal}
If $Z\subset N_\R$ is a parallelotope, then $Y(Z)$ is symplectic.
\end{proposition}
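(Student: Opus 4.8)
The plan is to recognize $Y(Z)$, for a parallelotope $Z$, as the quotient of a symplectic vector space by a finite abelian group acting through symplectomorphisms, and then to invoke Beauville's foundational example \cite{Beau} that such quotients are symplectic. First I would put $Z$ into a convenient normal form. By Corollary \ref{twist} I may translate $Z$ so that it is centered at the origin, and by Example \ref{sub} I may write $Y_N(Z)\cong Y_{N'}(Z)\times T^*T''$, where $N'$ is the saturation of the linear span of $Z$ and $N''$ is a complement. The factor $T^*T''$ is smooth, hence symplectic, and a product of a symplectic variety with a smooth symplectic variety is again symplectic (its regular locus is the product of the regular locus with the second factor, the form is the direct sum of the two forms, and a resolution is obtained by resolving the first factor). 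So it suffices to treat $Y_{N'}(Z)$, and replacing $N$ by $N'$ I am reduced to the case in which $Z=Z(\ba)$ for a configuration $\ba\in N^E$ that is a basis of $N_\R$ with primitive entries and $|E|=\operatorname{rk}N$.

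Next I would identify $Y(Z)$ as a finite quotient. Since $\ba$ is a basis of $N_\R$, the induced map $\gm^E\to T$ is an isogeny, so its kernel $K=\Hom(N/\Z\{\ba_e\mid e\in E\},\gm)$ is a finite group. In particular $\Lie(K)=0$, the moment map $\mu_\ba$ is the zero map, $\mu_\ba^{-1}(0)=S(E)$, and therefore $Y(Z)=\spec k[S(E)]^K=S(E)/K$. Moreover $K$ sits inside the diagonal torus $\gm^E$, which preserves the symplectic form $\sum_e z_e\wedge w_e$ (this form has $\gm^E$-weight zero), so $K$ acts on $S(E)$ through a finite subgroup of the symplectic group. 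This presentation is consistent with Lemma \ref{cube}: $K$ is trivial exactly when $\ba$ is a $\Z$-basis of $N$, i.e.\ when $Z$ is a cube and $Y(Z)=S(E)$ is smooth.

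Finally I would verify the three conditions in the definition of a symplectic variety for the finite symplectic quotient $S(E)/K$. Normality is immediate, since $k[S(E)]^K$ is the invariant ring of the normal ring $k[S(E)]$ under a finite group. The form $\omega=\sum_e z_e\wedge w_e$ is $K$-invariant and descends to a symplectic form on the image of the free locus of the $K$-action; because $K$ acts through the diagonal torus, any nontrivial element of $K$ moves both $z_e$ and $w_e$ for at least one index $e$, so each fixed locus—and hence the non-free locus—has codimension at least two, and by normality the descended form extends to a symplectic form on all of $\Yreg$ inducing the given Poisson bracket. The substantive point, and the step I expect to be the main obstacle, is the extension of $\omega$ across the exceptional locus of a resolution $\nu\colon\tilde Y\to Y(Z)$. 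Here I would form a resolution $\hat V$ of the normalization of $S(E)\times_{Y(Z)}\tilde Y$, obtaining a proper birational map $p\colon\hat V\to S(E)$ and a proper generically finite map $q\colon\hat V\to\tilde Y$ with $p^*\omega=q^*\nu^*\omega$ as rational forms; since $p^*\omega$ is regular (being pulled back from the smooth variety $S(E)$) and $q$ contracts no divisor, any pole of $\nu^*\omega$ along a divisor of $\tilde Y$ would force a pole of $p^*\omega$, a contradiction. Hence $\nu^*\omega$ extends to a regular $2$-form on $\tilde Y$, completing the proof. One subtlety to monitor is positive characteristic: if $\operatorname{char}k$ divides $|K|$, then $K$ is no longer a reduced finite group and the quotient argument must be treated with more care.
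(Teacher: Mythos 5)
Your proof is correct and follows essentially the same route as the paper: reduce via Corollary \ref{twist} and Example \ref{sub} to the case $Z=Z(\ba)$ with $\ba$ a primitive basis of $N_\R$, observe that $K$ is finite so $Y(Z)\cong S(E)/K$, and conclude by Beauville. The paper simply cites \cite[2.4]{Beau} for the finite symplectic quotient step, whereas you re-derive it in detail (and rightly flag the characteristic-dividing-$|K|$ caveat, which the paper leaves implicit); this is elaboration, not a different argument.
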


\begin{proof}
Corollary \ref{twist} allows us to reduce to the case where $Z$ is centered at the origin and
Example \ref{sub} allows us to reduce to the case where $\dim Z = \dim N_\R$,
so we may assume that $Z = Z(\ba)$ for some primitive spanning arrangement $\ba$ that is a basis
for $N_\R$.  Then $K$ is finite, and $Y(Z)$ is symplectic by a result of Beauville \cite[2.4]{Beau}.
\end{proof}

\section{Conical symplectic varieties from zonotopal tilings}\label{chp:generalcase}
Once again, let $N$ be a lattice and let $T := \spec k[N^*]$.
Let $Z\subset N_\R$ be an integral zonotope 
and let $\cT$ be a tiling of $Z$.  The purpose of this section will be to define a Poisson 
$T\times\gm$-variety $Y(\cT)$ and show that is a conical symplectic variety with $Y(\cT)_0\cong Y(Z)$.
We will begin by defining a variety $Y(\ba, u, \cM)$ over $Y(\ba,u)$ for any primitive spanning configuration $\ba\in N^E$, sign vector
$u\in\{+,-,0\}^E$, and affine oriented matroid $\cM$ over $\cV^*(\bar\ba)$, where $\bar\ba$ is the restriction of $\ba$
to the set $E_u = \{e\in E\mid u_e = 0\}$.  We will then show that the variety $Y(\ba, u, \cM)$, along with its map to $Y(\ba, u)$,
depends only on the tiling $\cT(\ba, u, \cM)$ of the zonotope $Z(\ba, u)$.  Since the Bohne-Dress theorem \cite[7.32]{Ziegler}
tells us that all tilings arise in this manner, this will allow us to define $Y(\cT)$ for any tiling.

Let $\ba$, $u$, and $\cM$ be given.
For any sign vector $v\in\{+,-,0\}^{E_u}$, let $\tv\in\{+,-,0\}^E$ be the sign vector obtained by appending the nonzero
entries of $u$ to $v$, and let 
$$S(E)_{u,\cM} := \bigcup_{v\in \cMp} S(E)_{\tv}.$$
We define $Y(\ba,u,\cM)$ to be the categorical quotient of $S(E)_{u,\cM}\cap\mu_{\ba}^{-1}(0)$ by $K$.
If $u=0$, we will simply write $S(E)_\cM$ and $Y(\ba,\cM)$.
As in Section \ref{chp:affinecase}, the variety $Y(\ba,u,\cM)$ comes with a Poisson structure, an action of $T\times\gm$,
and a moment map $\mu:Y(\ba,u,\cM)\to\Lie(T)^*$ for the action of $T$.

\begin{remark}\label{Lawrence fan}
As in Remark \ref{lawrence}, we define the {\bf Lawrence toric variety} $X(\ba, u, \cM)$ to be the categorical quotient of $S(E)_{u,\cM}$ by $K$.
If $u=0$, we will simply write $X(\ba,\cM)$.
The moment map $\mu_\ba:S(E)_{u,\cM}\to\Lie(K)^*$ induces a map on $X(\ba,u,\cM)$, and $Y(\ba,u,\cM)$ is isomorphic to the subvariety
of $X(\ba,u,\cM)$ defined by the vanishing of this map.

We now give a combinatorial description of the associated fan.
Consider the lattice $\tilde N$ and the vectors $\rho_e^\pm\in\tilde N$ defined in Remark \ref{lawrence}.
For each $v\in\cMp$, let $\sigma_v\subset \tilde N_\R$ be the cone generated by the vectors 
$\{-\rho_e^\epsilon\mid \epsilon\neq \tv_e\}.$
Then $X(\ba,u,\cM)$ is the toric variety associated with the {\bf Lawrence fan} $\Sigma(\ba,u,\cM)$ consisting of the cones 
$\{\sigma_v\mid v\in\cMp\}$ and all of their faces.  
It is clear that $\Sigma(\ba,u,\cM)$
is a refinement of the Lawrence cone $\sigma(\ba,u)$, all of whose rays are extremal rays of $\sigma(\ba,u)$.  Geometrically,
this means that $X(\ba,u,\cM)$
is proper over the affine Lawrence toric variety $X(\ba,u)$, and that the map from $X(\ba,u,\cM)$ 
induces a bijection between torus orbits of codimension 1 in $X(\ba,u,\cM)$ and $X(\ba,u)$.
Furthermore, every such toric variety arises via this construction.  This is proved when $u = 0$ in \cite[4.16]{Santos}, 
and the general case is similar.
\end{remark}

\begin{remark}\label{GIT}
If $\cM = \cM(\ba,r)$ for an element $r\in \Z^E$, then $S(E)_{\cM}$ is equal to the semistable locus inside of $S(E)$
for the character of $K$ induced by $$r\in\Z^E\cong\Hom(\gm^E,\gm)\twoheadrightarrow\Hom(K,\gm).$$
Thus $X(\ba,\cM)$ is a GIT quotient of $S(E)$ by $K$ 
and $Y(\ba,\cM)$ is a GIT quotient of $\mu_\ba^{-1}(0)\cap S(E)$
by $K$.  This construction coincides with those in \cite[\S 5]{BD} and \cite[\S 6]{HS}.
\end{remark}

\begin{proposition}\label{YbacM}
If $\cM$ and $\cM'$ are affine oriented matroids over $\cV^*(\bar\ba)$ that induce the same tiling $\cT(\ba,u,\cM) = \cT(\ba,u,\cM')$,
then $Y(\ba,u,\cM)\cong Y(\ba,u,\cM')$ as Poisson $T\times\gm$-varieties over $Y(\ba,u)$.
\end{proposition}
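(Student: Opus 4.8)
The plan is to show that the two constructions are not merely isomorphic but literally identical, by proving that the tiling determines the set of positive covectors. Recall that $Y(\ba,u,\cM)$ is built entirely from the open set $S(E)_{u,\cM} = \bigcup_{v\in\cMp} S(E)_{\tv}$, since it is the categorical quotient of $S(E)_{u,\cM}\cap\mu_\ba^{-1}(0)$ by $K$, and its Poisson structure, its $T\times\gm$-action, and its map to $Y(\ba,u)$ are all induced canonically from $S(E)$, $\mu_\ba$, and the inclusion $S(E)_{u,\cM}\hookrightarrow S(E)_u$. Consequently it suffices to prove the equality of subsets
$$\cMp = \cM'_+\subset\{+,-,0\}^{E_u};$$
then $S(E)_{u,\cM}=S(E)_{u,\cM'}$ and every piece of structure coincides on the nose, which is stronger than the claimed isomorphism.

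By hypothesis $\{Z(\ba,\tv)\mid v\in\cMp\} = \{Z(\ba,\tv)\mid v\in\cM'_+\}$ as sets of zonotopes. The equality $\cMp=\cM'_+$ will therefore follow at once, turning an equality of images into an equality of domains, provided I show that the assignment $v\mapsto Z(\ba,\tv)$ is \emph{injective} on the covectors of an oriented matroid over $\cV^*(\bar\ba)$. This is the combinatorial heart of the proposition, and it is the tiled analogue of the statement (Corollary \ref{yau}) that $Y(\ba,u)$ depends only on the single zonotope $Z(\ba,u)$, now upgraded from one zonotope to an entire tiling.

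To prove injectivity I reconstruct $\tv$ from the zonotope $F=Z(\ba,\tv)$. First I reduce to parallel classes: partition the indices of $E$ into maximal classes on which the vectors $\ba_e$ agree up to sign (by primitivity, two proportional primitive vectors are equal up to sign). The covector axioms force any covector of an oriented matroid over $\cV^*(\bar\ba)$, and in particular any $v\in\cMp$, to be constant up to these fixed signs on each class, so it is enough to recover, for each class, whether its common value is $0$ or a definite sign. The classes with value $0$ are exactly those contributing a segment $[-1,1]\ba_e$ to $F$, and these are detected by the directions of the edges of $F$; once the zero set is known, the center $\sum_{\tv_e=+}\ba_e - \sum_{\tv_e=-}\ba_e$ of $F$ determines the remaining signs. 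Hence $\tv$ is recovered from $F$, injectivity holds, and $\cMp=\cM'_+$.

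The main obstacle is precisely this recoverability step, and within it the careful handling of parallel (equal-up-to-sign) vectors: it is the coherence of oriented-matroid covectors on parallel classes that rules out the spurious ambiguity one would otherwise have, namely splitting a single segment's contribution across two equal vectors in inequivalent ways (which would break injectivity and could make $S(E)_{u,\cM}$ depend on more than $\cT$). Once $\cMp=\cM'_+$ is established, the two varieties together with their maps to $Y(\ba,u)$ and their Poisson $T\times\gm$-structures are identical. As an alternative route that avoids asserting literal equality, one could instead cover $Y(\ba,u,\cM)$ by the canonical affine pieces $Y(Z(\ba,\tv))$ and glue them along the open inclusions of Corollary \ref{open} dictated by the face incidences of $\cT$; but the covector-recovery argument makes the identification manifest and requires essentially the same combinatorial input.
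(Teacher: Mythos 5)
Your argument has a genuine gap, and in fact its central claim is false. You assert that the covector axioms force every positive covector to be coherent on parallel classes, so that $v\mapsto Z(\ba,\tv)$ is injective, hence $\cMp=\cM'_+$ and $S(E)_{u,\cM}=S(E)_{u,\cM'}$. Coherence on parallel classes does hold for the covectors of the underlying central oriented matroid $\cV^*(\bar\ba)$, but it fails for the positive covectors of an affine oriented matroid over it: the affine extension is precisely what allows parallel hyperplanes to be translated to distinct positions, producing positive covectors that separate parallel elements. Example \ref{tpo} of the paper is a direct counterexample. There $\ba=(1,-1)$, so $Z(\ba,(+,+))=Z(\ba,(-,-))=\{0\}$ and $Z(\ba,(0,-))=Z(\ba,(+,0))=[0,2]$; the two affine oriented matroids inducing the subdivision of $[-2,2]$ into two intervals have maximal positive covectors $\{(+,-),(-,+),(+,+)\}$ and $\{(+,-),(-,+),(-,-)\}$ respectively, and the associated open sets are $S(E)_{\cM}=\{z_1,z_2\text{ not both }0\}$ and $S(E)_{\cM'}=\{w_1,w_2\text{ not both }0\}$ --- genuinely different subsets of $S(E)$. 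The resulting quotients are two copies of $T^*\P^1$ related by a flop over $Y(\ba)$: isomorphic, but emphatically not equal as subquotients of $S(E)$. So no argument can establish the literal equality you aim for, and the proposition really does require constructing a nontrivial isomorphism.

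The paper's proof does exactly that: the hypothesis $\cT(\ba,u,\cM)=\cT(\ba,u,\cM')$ implies that $\cM$ and $\cM'$ differ by a signed permutation of $E_u$ fixing $\bar\ba$, which reduces one to a single transposition of indices $e,f$ with $\ba_e=\ba_f$ or $\ba_e=-\ba_f$. One then writes down an explicit $\gm$-equivariant Poisson automorphism of $S(E)_u$ (swapping $z_e$ with $z_f$ and $w_e$ with $w_f$ in the first case; sending $z_e\mapsto w_f$, $z_f\mapsto -w_e$, etc., in the second), checks that it carries $S(E)_{u,\cM}$ to $S(E)_{u,\cM'}$ and normalizes $K$, and verifies that it covers the identity on $Y(\ba,u)$ because $z_ew_e-z_fw_f$ vanishes on $\mu_\ba^{-1}(0)$. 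Your parenthetical alternative --- gluing the affine pieces $Y(F)$ along the face incidences of $\cT$ --- is closer to a workable strategy (compare Proposition \ref{limit}), but it is not ``the same combinatorial input'': one must show the gluing data depends only on the tiling, which is a separate verification and not a consequence of the injectivity you rely on.
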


\begin{proof}
If $\cT(\ba,u,\cM) = \cT(\ba,u,\cM')$, then $\cM$ and $\cM'$ differ by a signed permutation of $E_u$ that fixes $\bar\ba$.
We will reduce to the case of a single transposition.  Either we have indices $e,f\in E$ with $\ba_e = \ba_f$
and $\cM$ and $\cM$ differ by swapping the indices $e$ and $f$, or we have $\ba_e = -\ba_f$
and $\cM$ and $\cM'$ differ by swapping $e$ and $f$ and reversing the signs.
In the first case, consider the $\gm$-equivariant Poisson automorphism of $S(E)_u$ in which the $e$ and $f$
coordinates are swapped.  This automorphism sends $S(E)_{u,\cM}$ to $S(E)_{u,\cM'}$.
It is not $\gm^E$-equivariant unless we modify the action of $\gm^E$ on 
$S(E)_{u,\cM'}$ by the corresponding automorphism of $\gm^E$.  This automorphism of $\gm^E$ 
takes $K$ to itself, and our isomorphism from $S(E)_{u,\cM}$ to $S(E)_{u,\cM'}$
descends to a $T\times\gm$-equivariant Poisson isomorphism from $Y(\ba,u,\cM)$ to $Y(\ba,u,\cM')$.
The fact that it covers the identity automorphism of $Y(\ba,u)$ follows from the fact that $z_ew_e - z_fw_f$
vanishes on $\mu_\ba^{-1}(0)$.  In the second case, we instead swap $z_e$ with $w_f$ and $z_f$ with $-w_e$
(see Example \ref{tpo}).
\end{proof}

For any tiling $\cT$ of an integral zonotope $Z\subset N_\R$, we choose a finite set $E$,
a primitive spanning configuration $\ba\in N^E$, and a sign vector $u\in\{+,-,0\}^E$ such that $Z=Z(\ba,u)$,
along with an affine oriented matroid $\cM$ over $\cV^*(\bar\ba)$ such that $\cT = \cT(\ba,u,\cM)$,
and we define $Y(\cT)$ to be the Poisson $T\times\gm$-variety $Y(\ba,u,\cM)$ over $Y(\ba,u)=Y(Z)$.
Propositions \ref{yau} and \ref{YbacM} combine to tell us that $Y(\cT)$ is well defined.  

\begin{example}\label{dumb tiling}
If $\cT$ consists only of $Z$ and all of its faces, then $Y(\cT)\cong Y(Z)$.
\end{example}

\begin{example}\label{tpo}
Suppose that $N=\Z$, $Z = [-2,2]$, and $\cT$ is the subdivision of $Z$ into two intervals of length 2.
Let $E = \{1,2\}$ and $\ba = (1,-1)\in N^E$, so that $Z = Z(\ba)$.  Then $K$ 
is equal to the diagonal subtorus of $\gm^2$, which acts on $z_1$ and $z_2$ with weight -1 and on $w_1$ and $w_2$
with weight 1.  We have two choices of oriented matroid that induce this tiling.  The first, which we will call $\cM$,
has maximal positive covectors $\{(+,-),(-,+),(+,+)\}$, so $S(E)_{\cM}$ is the locus of points where $z_1$ and $z_2$
are not both zero.  
The second, which we will call $\cM'$, has maximal covectors $\{(+,-),(-,+),(-,-)\}$, so $S(E)_{\cM'}$ 
is the locus of points where $w_1$ and $w_2$ are not both zero.  
The Lawrence toric variety $X(\ba,\cM)$ is a rank 2 vector bundle over $\mathbb{P}^1$,
and the subvariety $Y(\ba,\cM)$ is a rank 1 sub-bundle which is isomorphic to $T^*\mathbb{P}^1$.
The variety $Y(\ba,\cM')$ is also isomorphic to $T^*\mathbb{P}^1$, and is related to $Y(\ba,\cM)$
by a flop.  The equivariant Poisson isomorphism between $Y(\ba,\cM)$ and $Y(\ba,\cM')$ over $Y(\ba)$ is given by swapping $z_1$ with $w_2$ and $z_2$ with $-w_1$.  Note that the coordinate ring of $Y(\ba)$
is generated by the functions $z_1w_2$, $z_2w_1$, and $z_1w_1 = -z_2w_2$, each of which is preserved by this 
automorphism.  The symplectic form $dz_1\wedge dw_1 + dz_2\wedge dw_2$ is also preserved.
The torus $T$ acts with weight -1 on $z_1$ and $w_2$ and with weight 1 on $z_2$ and $w_1$, so this action
is preserved, as well.
\end{example}

\begin{proposition}\label{limit}
The Poisson $T\times\gm$-variety $Y(\cT)$ is the colimit of the directed system $\{Y(F)\mid F\in \cT\}$
with morphisms given by Corollary \ref{open}.
\end{proposition}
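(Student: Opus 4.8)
The plan is to situate $Y(\cT)$ and all the $Y(F)$ inside a single Lawrence toric variety and then verify the universal property of the colimit one toric chart at a time. Choose $\ba,u,\cM$ with $\cT=\cT(\ba,u,\cM)$, and write $X:=X(\ba,u,\cM)$ and $\Sigma:=\Sigma(\ba,u,\cM)$. By Remark \ref{Lawrence fan}, $Y(\cT)$ is the closed subvariety of $X$ cut out by $\mu_\ba$; the Lawrence cones $\sigma_F$ ($F\in\cT$) are cones of $\Sigma$, with $F\mapsto\sigma_F$ an isomorphism of the face poset of $\cT$ onto the poset of these ``tile-cones''; and the maximal cones of $\Sigma$ are the $\sigma_F$ with $F$ maximal. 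For each $F$ one has $Y(F)=Y(\cT)\cap U_{\sigma_F}$, which realizes the open immersion $\iota_F\colon Y(F)\hookrightarrow Y(\cT)$; when $F$ is a face of $F'$ the inclusion $U_{\sigma_F}\subset U_{\sigma_{F'}}$ recovers the map of Corollary \ref{open}, so the $\iota_F$ assemble into a cocone under the system $\{Y(F)\}$. Since every cone of $\Sigma$ is a face of a maximal one, the charts $U_{\sigma_F}$ cover $X$, and hence the opens $\iota_F(Y(F))$ cover $Y(\cT)$.

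Next I would check the universal property. Given a Poisson $T\times\gm$-variety $W$ and morphisms $g_F\colon Y(F)\to W$ compatible with the face maps, uniqueness of a map $g\colon Y(\cT)\to W$ with $g\circ\iota_F=g_F$ is automatic because the $\iota_F(Y(F))$ cover $Y(\cT)$. For existence it suffices that the set maps $g_F\circ\iota_F^{-1}$ agree on overlaps: they then glue to a morphism of schemes, which is Poisson and $T\times\gm$-equivariant because it is so on each chart. Fix $x\in Y(\cT)$ and set $\cT_x:=\{F\in\cT\mid x\in\iota_F(Y(F))\}$. Compatibility of the $g_F$ along a single face relation shows that $F\mapsto(g_F\circ\iota_F^{-1})(x)$ is constant on comparable pairs in $\cT_x$, so all of these values agree provided $\cT_x$ is connected as a subposet of $\cT$.

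The whole proof therefore reduces to connectedness of $\cT_x$, which I expect to be the main obstacle. The key point is that the naive guess --- that $\iota_F(Y(F))\cap\iota_{F'}(Y(F'))$ is governed by the common faces of $F$ and $F'$ --- is simply false: two disjoint maximal tiles can have overlapping charts (meeting, for instance, in the big torus orbit), so the gluing must proceed through zigzags in $\cT_x$ rather than through pairwise intersections. To prove connectedness I would use the orbit stratification of $X$: the point $x$ lies in a unique torus orbit $O_\gamma$ with $\gamma\in\Sigma$, and $x\in U_{\sigma_F}$ exactly when $\gamma$ is a face of $\sigma_F$, so $\cT_x=\{F\mid\gamma\preceq\sigma_F\}$ is precisely the set of tile-cones in the star of $\gamma$. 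When $\gamma=\sigma_{F_0}$ is itself a tile-cone this star has least element $F_0$ and is trivially connected; in general one argues that $\operatorname{Star}_\Sigma(\gamma)$, being the fan of the irreducible normal orbit closure $V(\gamma)$, is connected through codimension one, and that the codimension-one cones linking its maximal cones are again tile-cones (the shared facets of adjacent maximal tiles), so that the connectivity descends to the subposet $\cT_x\subset\cT$. Granting this, $\cT_x$ is connected, the $g_F$ glue, and $Y(\cT)$ satisfies the universal property of the colimit.
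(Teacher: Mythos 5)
Your argument is correct in outline but takes a genuinely different route from the paper's. The paper stays at the level of the open cover: it observes that $Y(F)\cap Y(F')=Y(F\cap F')$ whenever $F\cap F'\neq\emptyset$, and then handles the remaining identifications by producing, for any two maximal tiles $F,F'$, a minimal path $F=F_0,\dots,F_m=F'$ of adjacent maximal tiles with $Y(F)\cap Y(F')\subset Y(F_i)$ for all $i$; injectivity of the natural map from the colimit follows. You instead verify the universal property directly and reduce everything to poset-connectedness of $\cT_x=\{F\mid x\in Y(F)\}$, computed via the orbit--cone correspondence in the Lawrence toric variety as the set of tile-cones in the star of the cone $\gamma$ with $x\in O_\gamma$. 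Your diagnosis that disjoint maximal tiles can have overlapping charts (e.g.\ through the big torus orbit) is accurate and is exactly why both proofs need a zigzag; note that the paper is not contradicted by this, since its intersection formula is asserted only for tiles with $F\cap F'\neq\emptyset$. What your approach buys is a cleaner bookkeeping device (the star in the Lawrence fan) at the cost of importing toric machinery; the paper's version is more elementary but rests on its own unproved assertions (the intersection formula and the claim that a minimal path of adjacent tiles works). The two are comparably terse at the decisive combinatorial step.

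One step needs repair: your stated reason for strong connectivity of the star of $\gamma$ --- that it is the fan of the irreducible normal orbit closure $V(\gamma)$ --- does not suffice. The fan of an irreducible normal toric variety need not be connected through codimension one (two full-dimensional cones in $\R^2$ meeting only at the origin already give a counterexample). The correct reason here is that $|\Sigma|=\sigma(\ba,u)$ is a full-dimensional convex cone, so the image of the star of $\gamma$ in $\tilde N_\R/\operatorname{span}(\gamma)$ is a polyhedral subdivision of a full-dimensional convex cone, and such subdivisions are strongly connected through codimension one. You also still owe the argument that the interior walls of $\Sigma$ (the codimension-one cones separating two maximal cones) are precisely the tile-cones $\sigma_{F''}$ of shared facets of adjacent maximal tiles; this is true, and is the Lawrence-fan incarnation of the adjacency fact the paper invokes, but it is the real content of the proposition and should not be left at the level of ``one argues.''
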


\begin{proof}
It is clear from the definitions that $\{Y(F)\mid F\in \cT\}$ is an open cover of $Y(\cT)$, and therefore
that there is a surjective map from the colimit to $Y(\cT)$ which is locally an isomorphism. 
We need to prove that this map is injective, as well.
We observe that, if $F,F'\in\cT$ and $F\cap F'\neq \emptyset$, then $Y(F)\cap Y(F') = Y(F\cap F')$.
Thus, it is sufficient to show that, for any maximal $F,F'\in\cT$, there exists a sequence
$$F = F_0, F_1, \ldots , F_{m-1}, F_m = F'$$ of maximal elements of $\cT$
such that $F_{i-1}\cap F_i\neq\emptyset$ and $Y(F)\cap Y(F') \subset Y(F_i)$ for all $1\leq 1\leq m$.
Any minimal path of adjacent maximal tiles from $F$ to $F'$ will satisfy this condition.
\end{proof}

The following three results are straightforward from the definition of $Y(\cT)$ and the
results in the previous section (Corollary \ref{twist} and Propositions \ref{effective} and \ref{reg}).

\begin{proposition}\label{tiled twist}
If $\cT'$ is a translation of $\cT$ by $\eta\in N$, then $Y(\cT') \cong Y(\cT)[\eta]$.
\end{proposition}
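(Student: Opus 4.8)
The plan is to reduce the tiled statement to the already-established single-zonotope statement (Corollary \ref{twist}) by exploiting the colimit presentation of $Y(\cT)$ from Proposition \ref{limit}. First I would record the purely combinatorial fact that translation by $\eta$ is an isomorphism of tilings: the map $F\mapsto F+\eta$ is a bijection from $\cT$ to $\cT'$ that preserves the face relation and commutes with intersections, since all three conditions defining a tiling are manifestly translation-invariant. Thus the directed system $\{Y(F')\mid F'\in\cT'\}$ is indexed by the same poset as $\{Y(F)\mid F\in\cT\}$, with $F'$ corresponding to $F+\eta$.

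Next I would observe that the twist is functorial and preserves colimits. Concretely, the assignment $(t,s)\mapsto(\eta(s)^{-1}t,s)$ is a group automorphism $\phi_\eta$ of $T\times\gm$ (using that $T$ is abelian), and $Y[\eta]$ is nothing but $Y$ with its action precomposed by $\phi_\eta$, the Poisson structure and underlying scheme being untouched. Hence $Y\mapsto Y[\eta]$ is an autoequivalence of the category of Poisson $T\times\gm$-varieties, with inverse $Y\mapsto Y[-\eta]$, and in particular it carries the colimit of a directed system to the colimit of the twisted system.

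Then I would invoke Corollary \ref{twist}, which for each $F\in\cT$ furnishes a Poisson $T\times\gm$-isomorphism $Y(F+\eta)\cong Y(F)[\eta]$. The crucial point is that these isomorphisms are compatible with the open inclusions of Corollary \ref{open}, so that they assemble into an isomorphism of directed systems $\{Y(F')\mid F'\in\cT'\}\cong\{Y(F)[\eta]\mid F\in\cT\}$. Granting this, Proposition \ref{limit} together with the previous paragraph yields
\[
Y(\cT')\;=\;\varinjlim_{F'\in\cT'} Y(F')\;\cong\;\varinjlim_{F\in\cT} Y(F)[\eta]\;\cong\;\Big(\varinjlim_{F\in\cT}Y(F)\Big)[\eta]\;=\;Y(\cT)[\eta],
\]
which is the assertion.

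The one genuinely non-formal step, which I expect to be the main obstacle, is the compatibility of the twist isomorphisms with the open inclusions. I anticipate this being routine rather than hard: the isomorphism underlying Corollary \ref{twist} comes, via Lemma \ref{append}, from an explicit monomial formula on coordinate rings (multiplication by a power of the appended coordinate $z_0$) that is defined uniformly across all the localizations $k[S(E)_{\tv}]$ at once. Since the open inclusions $Y(F)\hookrightarrow Y(F')$ are induced by these localization maps, the relevant squares commute on the nose. I would spell this out just far enough to see that none of the choices made in Corollary \ref{twist} depend on the tile $F$, so that the face-by-face isomorphisms are genuinely natural in the poset variable.
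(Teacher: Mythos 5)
Your proof is correct and is essentially the argument the paper has in mind: the paper states this proposition without proof, asserting it is ``straightforward from the definition of $Y(\cT)$'' together with Corollary \ref{twist}, and your argument is precisely that reduction, gluing the face-wise twist isomorphisms coming from Lemma \ref{append} over the open cover of Proposition \ref{limit}. Your closing observation --- that the monomial isomorphism of Lemma \ref{append} is defined uniformly on the whole coordinate ring and hence restricts compatibly to every localization $k[S(E)_{\tv}]$, making the face-wise isomorphisms natural in $F$ --- is exactly the point needed to make the unstated details go through.
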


\begin{proposition}\label{tiling effective}
The $T$-action on $Y(\cT)$ is effective.
\end{proposition}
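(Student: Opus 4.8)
The plan is to reduce effectiveness of the $T$-action on $Y(\cT)$ to effectiveness on a single open piece, where it is already established. First I would invoke Proposition \ref{limit}, which exhibits $\{Y(F)\mid F\in\cT\}$ as an open cover of $Y(\cT)$, with the inclusions $Y(F)\hookrightarrow Y(\cT)$ given by Corollary \ref{open}. The crucial feature of these inclusions is that they are morphisms of Poisson $T\times\gm$-varieties, hence in particular $T$-equivariant; this is built into their construction, since they arise from the $T\times\gm$-equivariant maps of Proposition \ref{include} and Lemma \ref{append}. Fixing any single tile $F\in\cT$ — for concreteness one may take a vertex — thus produces a nonempty open $T$-invariant subvariety $Y(F)\subset Y(\cT)$.

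The second step uses that every tile $F\in\cT$ is itself an integral zonotope, so Proposition \ref{effective} applies directly to tell us that $T$ acts effectively on $Y(F)$. The third and final step is the standard observation that an effective action on a nonempty open invariant subvariety forces effectiveness on the ambient variety: if $t\in T$ acts as the identity on all of $Y(\cT)$, then its restriction acts as the identity on $Y(F)$, and effectiveness of the $T$-action on $Y(F)$ forces $t=e$. Hence the $T$-action on $Y(\cT)$ is effective.

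I do not expect any genuine obstacle here; the argument is essentially a formal transport of Proposition \ref{effective} along an open inclusion, which is exactly why the proposition is flagged as straightforward. The only point meriting explicit mention is the $T$-equivariance of the inclusion $Y(F)\hookrightarrow Y(\cT)$, ensuring that an element of $T$ acting trivially on $Y(\cT)$ genuinely restricts to an element acting trivially on $Y(F)$. Since this equivariance is already part of the definition of the transition maps, the proof amounts to little more than citing Proposition \ref{limit}, Corollary \ref{open}, and Proposition \ref{effective} in sequence.
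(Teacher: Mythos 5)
Your argument is correct and is exactly the route the paper intends: the paper simply declares Proposition \ref{tiling effective} ``straightforward from the definition of $Y(\cT)$'' and Proposition \ref{effective}, and your write-up fills in precisely that reduction --- restrict to a $T$-invariant open tile $Y(F)$, invoke effectiveness there, and conclude. Nothing further is needed.
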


\begin{proposition}\label{tiling reg}
The regular locus of $Y(\cT)$ is equal to the union of $\{Y(F)\mid \text{$F\in \cT$ a cube}\}.$
\end{proposition}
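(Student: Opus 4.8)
The plan is to leverage Proposition \ref{reg}, which identifies the regular locus of an affine variety $Y(Z)$, together with the open cover of $Y(\cT)$ furnished by Proposition \ref{limit}. The essential point is that regularity is a local property, so it suffices to check it on each affine chart $Y(F)$, where Proposition \ref{reg} is available because each tile $F$ is itself an integral zonotope.

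First I would establish the containment $\bigcup_{\text{$F\in\cT$ a cube}} Y(F)\subseteq Y(\cT)^{\reg}$. For each cube $F\in\cT$, Lemma \ref{cube} (combined with the reduction in Example \ref{sub}) shows that $Y(F)$ is smooth, and Corollary \ref{open} realizes $Y(F)$ as an open subvariety of $Y(\cT)$; hence $Y(F)$ lies in the regular locus.

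For the reverse containment, let $y\in Y(\cT)^{\reg}$. By Proposition \ref{limit}, the collection $\{Y(F)\mid F\in\cT\}$ is an open cover of $Y(\cT)$, so $y$ lies in $Y(F)$ for some $F\in\cT$. Since $Y(F)$ is open in $Y(\cT)$, the point $y$ is regular as a point of $Y(F)$ as well. Now $F$ is an integral zonotope, so Proposition \ref{reg} applies to the affine variety $Y(F)$ and tells me that $y\in Y(G)$ for some face $G$ of $F$ that is a cube. By the second tiling axiom, faces of tiles are again tiles, so $G\in\cT$; thus $y\in Y(G)$ with $G\in\cT$ a cube, which places $y$ in the union on the right-hand side and completes the argument.

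The only point requiring care---the main, and rather minor, obstacle---is the bookkeeping that identifies the intrinsic open chart $Y(F)\subset Y(\cT)$ with the affine variety associated to the zonotope $F$ (so that Proposition \ref{reg} may legitimately be invoked there), and that the resulting cube charts $Y(G)$ embed into $Y(\cT)$ via the same maps used in the colimit. Both compatibilities follow directly from the construction of $Y(\cT)$ as the colimit of Proposition \ref{limit}, whose structure morphisms are precisely the open inclusions of Corollary \ref{open}.
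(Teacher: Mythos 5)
Your argument is correct and matches the paper's intent exactly: the paper dismisses this proposition as ``straightforward from the definition of $Y(\cT)$'' together with Proposition \ref{reg}, and your write-up simply fills in that outline --- covering $Y(\cT)$ by the charts $Y(F)$ from Proposition \ref{limit}, applying Proposition \ref{reg} on each chart, and using the tiling axiom that faces of tiles are tiles. No gaps; the compatibility point you flag at the end is indeed the only bookkeeping required, and it is handled correctly.
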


We are now equipped
to prove that $Y(\cT)$ is symplectic when all of the elements of $\cT$ are parallelotopes.

\begin{proposition}\label{parasymp}
If $F$ is a parallelotope for all $F\in \cT$, then $Y(\cT)$ is symplectic.
\end{proposition}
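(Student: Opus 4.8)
The plan is to show that the property of being a symplectic variety in Beauville's sense is Zariski-local on a normal Poisson variety, and then to apply Proposition~\ref{paranormal} to an open cover of $Y(\cT)$. By the proof of Proposition~\ref{limit}, the open subvarieties $\{Y(F)\mid F\in\cT\}$ form an open cover of $Y(\cT)$; it suffices to use the maximal tiles $F$, each of which is a parallelotope by hypothesis, so that $Y(F)$ is symplectic by Proposition~\ref{paranormal}. Everything then reduces to checking that the three ingredients of symplecticity---normality, the symplectic form on the regular locus, and extension to a resolution---can each be verified locally on this cover.

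First I would dispose of normality and the global symplectic form. Since symplectic varieties are normal and normality is local, $Y(\cT)$ is normal, being covered by the normal open sets $Y(F)$; as the maximal tiles overlap along faces (so the corresponding $Y(F)$ overlap in the nonempty opens $Y(F\cap F')$, by the proof of Proposition~\ref{limit}), $Y(\cT)$ is connected and hence integral. For the form, observe that for an open subvariety $U\subset Y(\cT)$ one has $U^{\reg}=U\cap Y(\cT)^{\reg}$, so the sets $Y(F)^{\reg}$ cover $Y(\cT)^{\reg}$. On each $Y(F)^{\reg}$ the Poisson bivector is nondegenerate, with inverse the symplectic form $\omega_F$ supplied by Proposition~\ref{paranormal}. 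Since $\omega_F$ is determined by the restriction of the ambient Poisson structure, these forms agree on overlaps and glue to a $2$-form $\omega$ on $Y(\cT)^{\reg}$ that is closed and nondegenerate (both conditions being local) and that induces the given Poisson structure.

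The remaining, and genuinely delicate, step is the resolution condition. Fix a resolution $\nu\colon\tilde Y\to Y(\cT)$ and set $\tilde U_F:=\nu^{-1}(Y(F))$; since $\nu$ is proper and birational and $Y(F)$ is normal, each $\tilde U_F\to Y(F)$ is itself a resolution of $Y(F)$. The rational $2$-form $\nu^*\omega$ is regular on $\nu^{-1}(Y(\cT)^{\reg})$, and I must show it extends to a regular section of $\Omega^2_{\tilde Y}$. Here is where I invoke the ``equivalently any resolution'' clause built into Beauville's definition: because $Y(F)$ is symplectic, $\nu^*\omega$ extends over the resolution $\tilde U_F\to Y(F)$, i.e.\ $\nu^*\omega$ is regular on $\tilde U_F$. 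Since the $\tilde U_F$ cover $\tilde Y$ and regularity of a section of $\Omega^2_{\tilde Y}$ is local, $\nu^*\omega$ extends over all of $\tilde Y$. I expect this localization of the extension property---rather than the nondegeneracy or gluing, which are routine---to be the main point requiring care, as it relies essentially on the resolution-independence of the extension condition. Combining the three steps shows $Y(\cT)$ is symplectic.
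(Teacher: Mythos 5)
Your proof is correct and takes essentially the same approach as the paper: cover $Y(\cT)$ by the open sets $Y(F)$ from Proposition~\ref{limit}, apply Proposition~\ref{paranormal} to each, and conclude by locality of the symplectic condition. The only difference is that the paper simply asserts ``being symplectic is a local property,'' whereas you carefully unpack why normality, the gluing of the form on the regular locus, and the extension over a resolution can each be checked locally --- a worthwhile elaboration, but not a different argument.
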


\begin{proof}
By Proposition \ref{paranormal}, $Y(F)$ is symplectic for all $F\in \cT$.
Then Proposition \ref{limit} tells us that $Y(\cT)$ has an open cover by symplectic varieties.
Since being symplectic is a local property, this implies that $Y(\cT)$ is symplectic.
\end{proof}

We next investigate the relationship between $Y(\cT)$ and $Y(\cT')$ for $\cT'$ a refinement of $\cT$.

\begin{proposition}\label{refinement}
If $\cT$ is a tiling of $Z$ and $\cT'$ is a refinement of $\cT$, then the map from 
$Y(\cT')$ to $Y(Z)$ factors through $Y(\cT)$ via a $T\times\gm$-equivariant Poisson map $Y(\cT')\to Y(\cT)$ which is
proper and an isomorphism over the regular locus.
\end{proposition}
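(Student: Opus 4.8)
The plan is to realize the desired map as the restriction to the hypertoric loci of a proper toric morphism between Lawrence toric varieties, deducing existence and properness from toric geometry, the Poisson and equivariance properties from an open inclusion upstairs in $S(E)$, and the isomorphism over the regular locus from the fact that a cube admits no nontrivial tiling. First I would fix a single primitive spanning configuration $\ba\in N^E$ and sign vector $u$ with $Z=Z(\ba,u)$; by the Bohne--Dress theorem both tilings can then be written as $\cT=\cT(\ba,u,\cM)$ and $\cT'=\cT(\ba,u,\cM')$ for affine oriented matroids $\cM,\cM'$ over $\cV^*(\bar\ba)$, so that $Y(\cT)=Y(\ba,u,\cM)$ and $Y(\cT')=Y(\ba,u,\cM')$.

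The combinatorial heart of the argument is to translate the refinement of tilings into a refinement of Lawrence fans. Using the cones $\sigma_v=\langle -\rho_e^\epsilon\mid \epsilon\neq\hat v_e\rangle$ from Remark \ref{Lawrence fan}, I would verify the elementary chain of equivalences
\[
Z(\ba,\hat v')\subseteq Z(\ba,\hat v)\iff\big(\hat v_e\neq 0\Rightarrow\hat v'_e=\hat v_e\text{ for all }e\big)\iff\sigma_{v'}\subseteq\sigma_v,
\]
the first equivalence being immediate from the defining formula for $Z(\ba,\hat v)$, and the second from the fact that $\sigma_v$ is spanned by $\{-\rho_e^{\pm}\mid\hat v_e=0\}\cup\{-\rho_e^{\mp}\mid\hat v_e=\pm\}$. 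Since $\cT'$ refines $\cT$, every maximal tile $Z(\ba,\hat v')$ of $\cT'$ lies in some maximal tile $Z(\ba,\hat v)$ of $\cT$, hence $\sigma_{v'}\subseteq\sigma_v$; as every cone of $\Sigma(\ba,u,\cM')$ is a face of such a $\sigma_{v'}$ and both fans refine the Lawrence cone $\sigma(\ba,u)$, the fan $\Sigma(\ba,u,\cM')$ refines $\Sigma(\ba,u,\cM)$ with the same support. Standard toric geometry then yields a proper, equivariant toric morphism $p\colon X(\ba,u,\cM')\to X(\ba,u,\cM)$.

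Next I would restrict $p$ to the hypertoric subvarieties. Writing $\bar\mu$ and $\bar\mu'$ for the maps to $\Lie(K)^*$ induced by $\mu_\ba$ on $X(\ba,u,\cM)$ and $X(\ba,u,\cM')$, Remark \ref{Lawrence fan} gives $Y(\cT)=V(\bar\mu)$ and $Y(\cT')=V(\bar\mu')$; since both are induced by the same $\mu_\ba$ we have $\bar\mu'=\bar\mu\circ p$, whence $Y(\cT')=p^{-1}(Y(\cT))$ and $p$ restricts to a proper morphism $Y(\cT')\to Y(\cT)$ by base change. For the Poisson and $T\times\gm$-equivariant structure, and for the factorization through $Y(Z)$, I would observe that whenever $Z(\ba,\hat v')\subseteq Z(\ba,\hat v)$ the monomial $f_{\hat v}$ divides $f_{\hat v'}$, so $S(E)_{\hat v'}\subseteq S(E)_{\hat v}$; taking the union over the maximal tiles (which already give the largest of these opens) produces a $K$- and $\gm^E\times\gm$-equivariant open inclusion $S(E)_{u,\cM'}\subseteq S(E)_{u,\cM}\subseteq S(E)_u$ of symplectic varieties. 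Passing to $\mu_\ba^{-1}(0)$ and taking categorical quotients by $K$ exhibits $p|_{Y(\cT')}$ as induced by this inclusion, so it is Poisson and $T\times\gm$-equivariant, and the composite $Y(\cT')\to Y(\cT)\to Y(Z)$ agrees with the natural map to $Y(Z)$.

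Finally, for the isomorphism over the regular locus, recall from Proposition \ref{tiling reg} that $Y(\cT)^{\reg}=\bigcup_{F\text{ a cube}}Y(F)$. A cube is a parallelotope, which by Example \ref{boring} admits only the trivial tiling, so each cube $F\in\cT$ also belongs to $\cT'$; its cone $\sigma_{v_F}$ is therefore a cone of both $\Sigma(\ba,u,\cM)$ and $\Sigma(\ba,u,\cM')$ which is not subdivided in the refinement, and $p$ is consequently an isomorphism over the affine open $U_{\sigma_{v_F}}$, hence over $Y(F)=Y(\cT)\cap U_{\sigma_{v_F}}$. Thus $p|_{Y(\cT')}$ is an isomorphism over $Y(\cT)^{\reg}$. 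The main obstacle I anticipate is the combinatorial translation in the second paragraph: making precise that a refinement of zonotopal tilings corresponds exactly to a refinement of the associated Lawrence fans, after which properness and the isomorphism over the regular locus follow formally from the toric dictionary.
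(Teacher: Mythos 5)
Your proposal is correct and follows essentially the same route as the paper: choose $\ba$, $u$, $\cM$, $\cM'$ so that $S(E)_{u,\cM'}\subset S(E)_{u,\cM}$ (giving the factorization and the Poisson/equivariance properties), deduce properness from the refinement of Lawrence fans, and get the isomorphism over the regular locus from Proposition \ref{tiling reg} together with the fact that a cube cannot be integrally subdivided. You simply spell out in more detail the combinatorial dictionary between tile containment, sign vectors, and cone containment that the paper leaves implicit.
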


\begin{proof}
We can choose $\ba$, $u$, $\cM$, and $\cM'$ such that $\cT = \cT(\ba,u,\cM)$, $\cT' = \cT(\ba,u,\cM')$,
and $S(E)_{u,\cM'}\subset S(E)_{u,\cM}$; the factorization is induced by this inclusion.
The fact that $Y(\cT')\to Y(\cT)$ is proper follows from the fact that $\Sigma(\ba,u,\cM')$ is a refinement of $\Sigma(\ba,u,\cM)$,
so the map $\Sigma(\ba,u,\cM')\to \Sigma(\ba,u,\cM)$ of Lawrence toric varieties is proper.  The fact that it is an isomorphism
over the regular locus follows from Proposition \ref{tiling reg} and the fact that a cube cannot be integrally subdivided.
\end{proof}

Example \ref{dumb tiling} combines with Proposition \ref{refinement} to show that the affinization of $Y(\cT)$ is isomorphic to $Y(Z)$.

\begin{corollary}\label{affinization}
If $\cT$ is a tiling of $Z$, then $Y(\cT)$ is convex, with $Y(Z)\cong Y(\cT)_0$.
\end{corollary}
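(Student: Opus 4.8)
The plan is to exhibit $Y(Z)$ as the affinization of $Y(\cT)$ by producing a proper birational morphism $Y(\cT)\to Y(Z)$ and then invoking normality of $Y(Z)$. Every tiling $\cT$ of $Z$ refines the trivial tiling $\cT_{\mathrm{triv}}$ consisting of $Z$ together with its faces, and $Y(\cT_{\mathrm{triv}})\cong Y(Z)$ by Example \ref{dumb tiling}. Applying Proposition \ref{refinement} with $\cT_{\mathrm{triv}}$ in the role of the coarse tiling and $\cT$ in the role of its refinement, the canonical map $\pi\colon Y(\cT)\to Y(Z)$ is a $T\times\gm$-equivariant Poisson morphism that is proper and an isomorphism over the regular locus $Y(Z)^{\reg}$. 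Since $Y(Z)$ is affine, the whole statement reduces to two claims: that $\pi$ induces an isomorphism $Y(\cT)_0\cong Y(Z)$, and that $k[Y(Z)]$ is a finitely generated integrally closed domain. The remaining requirements in the definition of \emph{convex} — properness of $Y(\cT)\to Y(\cT)_0$ and the isomorphism over the regular locus of $Y(\cT)_0$ — are then supplied verbatim by Proposition \ref{refinement}.

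To identify the affinization I would show that $\pi^\sharp\colon k[Y(Z)]\to k[Y(\cT)]$ is an isomorphism. It is injective because $\pi$ is dominant (it is an isomorphism over the dense open $Y(Z)^{\reg}$) and $Y(\cT)$ is integral. For the converse, set $U:=\pi^{-1}(Y(Z)^{\reg})$, a dense open of $Y(\cT)$ carried isomorphically onto $Y(Z)^{\reg}$; restriction then gives an injection $k[Y(\cT)]\hookrightarrow k[U]\cong k[Y(Z)^{\reg}]$. Because $Y(Z)$ is normal, its singular locus has codimension at least two, so algebraic Hartogs yields $k[Y(Z)^{\reg}]=k[Y(Z)]$. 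The composite $k[Y(Z)]\xrightarrow{\pi^\sharp}k[Y(\cT)]\hookrightarrow k[Y(Z)^{\reg}]=k[Y(Z)]$ is the identity, so $\pi^\sharp$ is an isomorphism and $Y(\cT)_0\cong Y(Z)$. The ring $k[Y(Z)]=k[\mu_\ba^{-1}(0)]^K$ is finitely generated because $K$ is reductive, and it is an integrally closed domain as soon as $Y(Z)$ is known to be a normal variety; with that, all the conditions defining convexity hold.

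The one ingredient not handed to us by the stated results is the normality of $Y(Z)$ for a general zonotope: Proposition \ref{paranormal} supplies it only when $Z$ is a parallelotope, and it cannot simply be bootstrapped from that case, since running the argument above with a parallelotope tiling in place of $\cT$ (where the total space is normal by Proposition \ref{parasymp}) identifies $Y(\cT)_0$ only with the \emph{normalization} of $Y(Z)$. I therefore expect normality of $Y(Z)$ to be the real content. The natural route is to prove that the moment-map fiber $\mu_\ba^{-1}(0)\subset S(E)$ is a normal complete intersection: it has the expected codimension $\dim K$, and one uses the explicit description of the regular locus (Proposition \ref{reg}) to see that it is regular in codimension one, whence it is normal by Serre's criterion; the invariant ring $k[Y(Z)]=k[\mu_\ba^{-1}(0)]^K$ is then normal because $K$ is reductive. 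The technical heart — showing that $\mu_\ba^{-1}(0)$ is reduced, irreducible, and regular in codimension one — is where I expect the work to lie; everything else in the proof is formal.
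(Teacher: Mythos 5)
Your proposal follows the same route the paper takes: the paper's entire justification for this corollary is the single sentence preceding it, namely that Example \ref{dumb tiling} combines with Proposition \ref{refinement} --- i.e., view $\cT$ as a refinement of the trivial tiling and read off the proper map $Y(\cT)\to Y(Z)$ that is an isomorphism over the regular locus. Everything you write beyond that is material the paper leaves implicit, and your instinct about where the real content hides is sound. The identification $k[Y(\cT)]\cong k[Y(Z)]$ does require knowing that global functions do not grow along the proper birational map, and the paper nowhere establishes normality of $Y(Z)$ for a general zonotope before this point; indeed the proof of Proposition \ref{symp} derives normality of $Y(F)$ \emph{from} the isomorphism $Y(F)\cong Y(\cT')_0$ of this corollary, so your remark that the parallelotope bootstrap a priori identifies $Y(\cT')_0$ only with the normalization of $Y(F)$ pinpoints a genuine circularity in the paper's exposition rather than a defect in your own argument. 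Your proposed repair --- normality of the complete intersection $\mu_\ba^{-1}(0)$ via Serre's criterion, then passage to $K$-invariants --- is the standard one and works. A variant worth noting sidesteps $R_1$ and the circularity entirely: since $\mu_\ba^{-1}(0)$ is a complete intersection it is Cohen--Macaulay, hence $S_2$, and one checks that the complement of $S(E)_{u,\cM}$ meets $\mu_\ba^{-1}(0)$ in codimension at least two, so regular functions on $\mu_\ba^{-1}(0)\cap S(E)_{u,\cM}$ extend to $\mu_\ba^{-1}(0)$; taking $K$-invariants gives $k[Y(\cT)]=k[Y(Z)]$ directly, after which the integrally-closed clause in the definition of convexity can be supplied by the parallelotope bootstrap without any vicious circle.
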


\begin{example}\label{armo}
Suppose that $N=\Z$, $Z = [-r,r]$ for some positive integer $r$, and $\cT$ is the tiling of $Z$ by $r$ intervals 
of length 2.  As in Example \ref{kleinian}, we may take $E = [r]$ and
write $Z = Z(\ba)$, where $\ba = (1,\ldots,1)\in N^E$, and $K\subset \gm^E$
is the kernel of the determinant map to $\gm$.  We can then choose $\cM$ with $\cT = \cT(\ba,\cM)$
so that $\cM_+$ is equal to the set of
all sign vectors $v\in \{+,-,0\}^E$ with at most one coordinate of $v$ equal to 0 and with no 
pair of coordinates $i<j$ such that $v_i = -$ and $v_j = +$.  In this case, $S(E)_{\cM}$ is the locus
of points for which we never have $z_i=0=w_j$ for $i<j$.  The variety $Y(\cT)$ is the unique crepant resolution of
the Kleinian singularity $Y(Z)$ of type $A_{r-1}$.  Picking a coarser tiling by deleting a vertex from $\cT$ corresponds to collapsing one of the projective lines in the exceptional fiber of the map $Y(\cT)\to Y(Z)$.
\end{example}

\begin{proposition}\label{symp}
For any integral zonotopal tiling $\cT$, the variety $Y(\cT)$ is symplectic.
\end{proposition}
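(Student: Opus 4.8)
The plan is to reduce the general case to the parallelotope case, which is already handled by Proposition \ref{parasymp}, and then transport the symplectic structure across the proper birational morphism supplied by Proposition \ref{refinement}. First I would choose a refinement $\cT'$ of $\cT$ all of whose tiles are parallelotopes; such a refinement exists because any tiling may be refined until it can no longer be refined, and (as recalled in Section \ref{sec:combinatorics}) the non-refinable tilings are exactly those by parallelotopes. By Proposition \ref{parasymp} the variety $Y(\cT')$ is symplectic, and by Proposition \ref{refinement} the structure map to $Y(Z)$ factors through a proper $T\times\gm$-equivariant Poisson morphism $f\colon Y(\cT')\to Y(\cT)$ that is an isomorphism over the regular locus of $Y(\cT)$; in particular $f$ is birational.

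Before applying Beauville's criterion I would record that $Y(\cT)$ is normal: normality is local, and $Y(\cT)$ is covered by the affine opens $Y(F)$, $F\in\cT$, each of which is one of the affine varieties $Y(\ba,u)$ of Section \ref{chp:affinecase}, whose normality is part of the standard theory of affine hypertoric varieties (Remark \ref{affineBD}). Granting this, the isomorphism $f^{-1}(Y(\cT)^\reg)\xrightarrow{\sim}Y(\cT)^\reg$ identifies the smooth locus of $Y(\cT)$ with an open subset of $Y(\cT')^\reg$, on which the symplectic form $\omega'$ of $Y(\cT')$ restricts. Transporting $\omega'$ through this isomorphism produces a symplectic form $\omega\in\Omega^2(Y(\cT)^\reg)$, and because $f$ is a Poisson map this $\omega$ induces the given Poisson bracket on $Y(\cT)$. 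This verifies the first half of the definition of a symplectic variety.

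For the second half I would build a resolution of $Y(\cT)$ out of one for $Y(\cT')$. Since $Y(\cT')$ is symplectic, by definition there is a resolution $\rho\colon W\to Y(\cT')$ for which $\rho^*\omega'$ extends to a regular $2$-form $\tilde\omega$ on $W$. The composition $\nu := f\circ\rho\colon W\to Y(\cT)$ is then proper, birational, and has smooth source, hence is a resolution of $Y(\cT)$. Over the dense open set $\nu^{-1}(Y(\cT)^\reg)$ we have $\nu^*\omega = \rho^*f^*\omega = \rho^*\omega'$ by the construction of $\omega$, and $\rho^*\omega'$ agrees there with the global form $\tilde\omega$; therefore $\tilde\omega$ is the desired extension of $\nu^*\omega$ to all of $W$, and $Y(\cT)$ is symplectic.

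The genuine mathematical content is packaged into the two cited propositions, so the argument is largely bookkeeping, but two points require care. The first is the normality of $Y(\cT)$, which must be established (or cited) to even speak of $Y(\cT)$ as a symplectic variety in Beauville's sense. The second, and the main obstacle, is to ensure that the ``for some resolution'' clause is fed by the already-known symplecticity of $Y(\cT')$: one should obtain the resolution $W$ and the extending form $\tilde\omega$ from $Y(\cT')$ itself, rather than from an independent appeal to resolution of singularities, which need not be available over an arbitrary field $k$. With the resolution routed through $f$, the identity $\nu^*\omega=\rho^*\omega'$ on a dense open set makes the extension automatic.
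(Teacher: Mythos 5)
Your overall strategy is exactly the paper's: refine $\cT$ to a parallelotope tiling $\cT'$, invoke Proposition \ref{parasymp} for $Y(\cT')$, and use Proposition \ref{refinement} to transport both the symplectic form on the regular locus and the resolution (a resolution of $Y(\cT')$ composed with $Y(\cT')\to Y(\cT)$ is a resolution of $Y(\cT)$ on which the pulled-back form extends). That part of your argument is correct and matches the paper's proof step for step.

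The one genuine gap is the normality step. You correctly identify that normality must be established, but your proposed justification --- that the affine charts $Y(\ba,u)$ are normal as ``part of the standard theory of affine hypertoric varieties (Remark \ref{affineBD})'' --- does not hold up. Remark \ref{affineBD} only says the construction agrees with Bielawski--Dancer's; it asserts nothing about normality, and the introduction of the paper explicitly states that even the symplecticity (hence in particular normality) of the Bielawski--Dancer/Hausel--Sturmfels varieties was not previously known in the singular case and that Proposition \ref{symp} is the only proof the authors know. Concretely, $\mu_\ba^{-1}(0)$ is a complete intersection in $S(E)$, and neither its irreducibility nor the condition $R_1$ is obvious, so normality of $k[\mu_\ba^{-1}(0)]^K$ cannot simply be cited. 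The paper closes this gap by running your parallelotope trick \emph{locally}: for each $F\in\cT$, choose a tiling $\cT''$ of $F$ by parallelotopes; then $Y(\cT'')$ is symplectic by Proposition \ref{parasymp}, hence normal, hence its ring of global functions is integrally closed, and $Y(F)\cong Y(\cT'')_0=\spec k[Y(\cT'')]$ (Corollary \ref{affinization}) is normal; since normality is local and the $Y(F)$ cover $Y(\cT)$, normality of $Y(\cT)$ follows. With that substitution your argument is complete.
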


\begin{proof}
We first prove that $Y(\cT)$ is normal.  Fix an element $F\in\cT$, and
let $\cT'$ be a tiling of $F$ by parallelotopes.  Proposition \ref{parasymp}
tells us that $Y(\cT')$ is symplectic and therefore normal, thus $Y(F) \cong Y(\cT')_0$
is normal, as well.  Since normality is a local property, $Y(\cT)$ is normal.

Now choose a refinement $\cT'$ of $\cT$ such that every $F\in\cT'$ is a parallelotope.
Proposition \ref{parasymp} again tells us that $Y(\cT')$ is symplectic, which means that its
Poisson structure is induced by a symplectic form on the regular locus
which extends to a 2-form on some resolution $\tilde Y$ of $Y(\cT')$.
By Proposition \ref{refinement}, $\tilde Y$ is also a resolution of $Y(\cT)$, and the 
Poisson structure on $Y(\cT)$ is also induced by the restriction of $\tilde\omega$ to the regular locus of $Y(\cT)$.
Thus $Y(\cT)$ is symplectic, as well.
\end{proof}

As a corollary to Proposition \ref{symp}, we may deduce that when every element of $\cT$ is a parallelotope, 
the variety $Y(\cT)$ has $\Q$-factorial terminal singularities.
When $Y(\cT)$ is projective over $Y(Z)$, this has various consequences regarding the birational geometry
of $Y(\cT)$.  For example, it implies that $Y(\cT)$ is ``as smooth as possible" among symplectic partial resolutions of $Y(Z)$.
In particular, if $Y(\cT)$ itself is not smooth (that is, if there exists an element of $\cT$ that is not a cube), then
$Y(Z)$ does not admit any symplectic resolution \cite[25]{Namiflop} (see also \cite[10]{NamikawaNote}).

\begin{corollary}\label{terminal}
If every element of $\cT$ is a parallelotope,
then $Y(\cT)$ has $\Q$-factorial terminal singularities.
\end{corollary}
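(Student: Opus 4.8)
The plan is to treat the two halves of the statement—$\Q$-factoriality and terminality—separately, in both cases reducing to the local model furnished by the maximal tiles. Since every tile of $\cT$ is a parallelotope, the open sets $Y(F)$ with $F$ a maximal tile cover $Y(\cT)$: any $y$ lies in $Y(F_0)$ for the smallest tile $F_0$ containing it, and $F_0$ is a face of some maximal $F$, so $y\in Y(F_0)\subset Y(F)$ by Corollary \ref{open}. Thus it suffices to control a single $Y(F)$ for $F$ a parallelotope. Writing $F=Z(\ba,u)$ and passing to the support $E_u$, the entries $\bar\ba$ are primitive and form a basis of $\operatorname{span}(F)$; after translating $F$ to be centered at the origin (Corollary \ref{twist}) and combining Example \ref{sub} with the computation in the proof of Proposition \ref{paranormal}, I would record that
$$Y(F)\;\cong\;(\bA^{2d}/K_F)\times T^*T'',$$
where $d=\dim F$, the group $K_F:=\ker\!\big(\gm^{E_u}\to\spec k[(N')^*]\big)$ with $N':=N\cap\operatorname{span}(F)$ is finite and acts symplectically on $\bA^{2d}=S(E_u)$ by $t\cdot z_e=t_e^{-1}z_e$ and $t\cdot w_e=t_ew_e$, and $T^*T''$ is smooth. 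In particular each such $Y(F)$ is globally a finite quotient of a smooth variety.

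For $\Q$-factoriality I would invoke the standard fact that a finite quotient of a smooth variety (equivalently, a variety with only quotient singularities) is $\Q$-factorial, so every $Y(F)$ above is $\Q$-factorial. Since $\Q$-factoriality is Zariski-local—a Weil divisor is $\Q$-Cartier once it is $\Q$-Cartier on each member of a finite open cover, by clearing denominators with a common multiple—the cover by the $Y(F)$ forces $Y(\cT)$ to be $\Q$-factorial.

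For terminality, recall first that $Y(\cT)$ is symplectic by Proposition \ref{symp}, hence Gorenstein with canonical singularities. By \cite{NamikawaNote}, a symplectic variety has terminal singularities if and only if its singular locus has codimension at least $4$, so the whole problem reduces to a codimension estimate. By Proposition \ref{tiling reg} the singular locus of $Y(\cT)$ meets $Y(F)$ in the image of the non-free locus of the $K_F$-action (times the smooth factor $T^*T''$). A nonidentity $t\in K_F$ fixes the $(z_e,w_e)$-plane exactly when $t_e=1$ and otherwise fixes only its origin, so its fixed locus has codimension $2\cdot\#\{e\in E_u\mid t_e\neq 1\}$. The decisive point is that this cardinality is never $1$: if $t$ had a single coordinate $t_{e_0}\neq 1$, then membership in $K_F$ would force $t_{e_0}^{m(\bar\ba_{e_0})}=1$ for all $m\in(N')^*$, and primitivity of $\bar\ba_{e_0}$ (there is an $m$ with $m(\bar\ba_{e_0})=1$) would give $t_{e_0}=1$, a contradiction. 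Hence every nonidentity $t$ moves at least two coordinate planes, its fixed locus has codimension $\geq 4$, and so $\operatorname{codim}\operatorname{Sing}Y(\cT)\geq 4$; terminality follows.

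I expect the main obstacle to be precisely the codimension computation of the last paragraph: everything hinges on ruling out \emph{symplectic reflections} in $K_F$, i.e.\ nonidentity elements whose fixed locus has codimension $2$. This is exactly where the hypothesis that every tile is a parallelotope enters—namely that each generator $\bar\ba_{e}$ is primitive—so the combinatorial assumption translates directly into the absence of transverse $A_1$-type singularities in codimension $2$. The remaining ingredients (the product decomposition of $Y(F)$, the local quotient description, and the local-to-global passage for $\Q$-factoriality and for the codimension bound) are routine given the results already established in Sections \ref{chp:affinecase} and \ref{chp:generalcase}.
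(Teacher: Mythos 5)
Your proposal is correct and follows the same skeleton as the paper's proof — reduce to a single maximal tile, use the presentation of $Y(F)$ as a finite quotient of a smooth variety (after splitting off the $T^*T''$ factor via Corollary \ref{twist} and Example \ref{sub}) to get $\Q$-factoriality, and then combine Proposition \ref{symp} with Namikawa's criterion that a symplectic variety is terminal iff its singular locus has codimension at least $4$. Where you diverge is in the codimension estimate itself. The paper never looks at stabilizers: it invokes Proposition \ref{reg} to identify the regular locus of $Y(Z)$ as the union of the opens $Y(F)$ over cube faces $F$, observes that every \emph{edge} of a parallelotope is a cube (this is where primitivity of the $\ba_e$ enters for them), and then computes upstairs in $S(E)$ that the complement of $\bigcup_{|u^{-1}(0)|\leq 1}S(E)_u$ is $\bigcup_{e\neq e'}V(z_e,w_e,z_{e'},w_{e'})$, visibly of codimension $4$. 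You instead analyze the finite group $K_F$ directly and rule out symplectic reflections, using primitivity of $\bar\ba_{e_0}$ to kill any element supported on a single coordinate. Both arguments hinge on the same combinatorial input, and yours has the virtue of making the geometric meaning transparent (parallelotope tiles $\Leftrightarrow$ no codimension-$2$ transverse $A_1$ singularities); the paper's has the virtue of reusing an already-proved global description of the regular locus and of not relying on the quotient map being \'etale over the free locus, which is the one place your argument silently assumes that $|K_F|$ is invertible in $k$ (the paper works over an arbitrary algebraically closed field, though its own appeal to ``orbifold'' $\Q$-factoriality carries a similar implicit caveat). Your attribution of the containment $\operatorname{Sing}\subseteq\pi(\text{non-free locus})$ to Proposition \ref{tiling reg} is misplaced — that proposition describes the regular locus combinatorially, while the containment you actually use is the standard fact about finite quotients — but this is cosmetic and does not affect the argument.
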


\begin{proof}
Having $\Q$-factorial terminal singularities is a Zariski local condition, hence it suffices to prove the statement for
$Y(Z)$, where $Z\subset N_\R$ is a parallelotope.  
By Corollary \ref{twist} and Example \ref{sub}, we may reduce
to the case where $Z = Z(\ba)$ for some primitive basis $\ba\in N^E$ for $N_\R$.  As noted in the proof of Proposition
\ref{paranormal}, the group $K$ is finite, and $Y(Z) \cong S(E)/K$ is an orbifold, which has $\Q$-factorial singularities.

By Proposition \ref{symp}, $Y(Z)$ is symplectic, which implies that having terminal singularities is equivalent to 
the singular locus having codimension at least 4
\cite{Nami-note}.  By Proposition \ref{reg}, the regular locus of $Y(Z)$ is equal to 
$$\bigcup_{\substack{\text{$F$ a face of $Z$}\\\text{$F$ a cube}}} Y(F).$$
Since every edge of a parallelotope is a cube, this includes 
$$\bigcup_{\substack{\text{$F$ a face of $Z$}\\ \dim F=1}} Y(F).$$
This in turn is equal to the image in $Y(Z)$ of the union of $S(E)_u$, where $u$ ranges over
all sign vectors $u\in \{+,-,0\}^E$ with at most one zero entry.
The complement of this locus in $S(E)$ is equal to
$$\bigcup_{e\neq e'\in E} V(z_e, w_e, z_{e'}, w_{e'}) \subset S(E),$$
which clearly has codimension 4.
\end{proof}

\section{The extended core}
Let $\cT$ be a tiling of an integral zonotope $Z\subset N_\R$, and let $Y(\cT)$ be the associated
symplectic $T\times\gm$-variety with $\gm$-homogeneous $T$-moment map $\mu:Y(\cT)\to\Lie(T)^*$. 
In this section we will
study the {\bf extended core} $$\Cext(\cT) := \mu^{-1}(0)\subset Y(\cT)$$
and the {\bf core} $$C(\cT)\subset\Cext(\cT),$$ which is defined as the locus of points whose $T$-orbit closures
are proper over a point.
This terminology agrees with the terminology introduced in \cite[\S 2]{HP04} in the case where $\cT = \cT(\ba,r)$ 
for some primitive spanning configuration $\ba\in N^E$ and some element $r\in\Z^E$.
We will use these varieties to prove that $Y(\cT)$ is a hypertoric variety
if and only if $0\in N$ is contained in the interior of $Z$.  We will also explain how to use the extended core to
recover $\cT$ from $Y(\cT)$.

As usual, we write $\cT = \cT(\ba,u,\cM)$ for a primitive spanning configuration $\ba\in N^E$,
a sign vector $u\in\{+,-,0\}^E$, and an affine oriented matroid $\cM$ over $\cV^*(\bar\ba)$.
Then $\Cext(\cT)$ is isomorphic to the categorical quotient of $\mu_E^{-1}(0)\cap S(E)_{u,\cM}$ by $K$.
For each sign vector $v\in\{+,-,0\}^{E_u}$, let $L_v\subset \mu_E^{-1}(0)$ be the linear subspace defined by the
ideal generated by $\{z_e,w_{e'}\mid \tv_e \neq +, \tv_{e'} \neq -\}$, 
and let $C_v$ be the image of $L_v\cap S(E)_{u,\cM}$ in $\Cext(\cT)$.
For every element $F\in\cT$, we define $C_F := C_v$
for the unique $v\in\cMp$ such that $F = Z(\ba,\tv).$


\begin{proposition}\label{cext}
Let $\cT$ be a tiling of a zonotope $Z\subset N_\R$.
\begin{itemize}
\item[i.] The assignment $F\mapsto C_F$ is an inclusion-reversing bijection between $\cT$ and 
closures of $T$-orbits in $\Cext(\cT)$.
In particular, the irreducible components of $\Cext(\cT)$ are $$\{C_F\mid \dim F = 0\}.$$
\item[ii.] For all $F\in\cT$, $C_F$ is isomorphic as a $T$-variety to the toric variety $X(\Sigma_F)$,
where $\Sigma_F$ is the local fan defined in Remark \ref{local fan}.
\item[iii.]  The core $C(\cT)$ is nonempty if and only if $\dim Z = \dim N_\R$, in which case it is 
equal to the union of those $C_F$ for which $F$ is not contained in the boundary of $Z$.
\item[iv.]  For each 0-dimensional zonotope $\{\eta\}\in\cT$, $\gm$ acts on 
the irreducible component $C_{\{\eta\}}$ via the element
$\eta\in N\cong\Hom(\gm,T)$.
\end{itemize}
\end{proposition}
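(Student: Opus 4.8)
The plan is to work entirely on the level of the presentation $\Cext(\cT)\cong\bigl(\mu_E^{-1}(0)\cap S(E)_{u,\cM}\bigr)/K$ recorded above, reading off the orbit structure and the toric geometry from the combinatorics of $\cMp$. First I would stratify $\mu_E^{-1}(0)=\{z_ew_e=0\ \forall e\}$ by $\gm^E$-orbits: these are the open strata $L_v^\circ\subset L_v$ on which every free coordinate ($z_e$ for $\tv_e=+$, $w_e$ for $\tv_e=-$) is nonzero, indexed by $v\in\{+,-,0\}^{E_u}$. A point of $L_v^\circ$ lies in $S(E)_{\widehat{v'}}$ exactly when $v'\le v$, so $L_v^\circ$ survives in $S(E)_{u,\cM}$ iff some $v'\in\cMp$ has $v'\le v$; since $v'\le v$ means $Z(\ba,\tv)$ is a face of the tile $Z(\ba,\widehat{v'})$ and faces of tiles are tiles, this forces $v\in\cMp$, and conversely $v=v'$ works. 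Hence the $T$-orbits of $\Cext(\cT)$ are precisely the images of $\{L_v^\circ\mid v\in\cMp\}$, each a single orbit because $\gm^E$ acts transitively on $L_v^\circ$. This yields the bijection $F=Z(\ba,\tv)\mapsto C_F$ of (i); it reverses inclusions because $\tv\ge\widehat{v'}\iff L_v\supseteq L_{v'}\iff Z(\ba,\tv)\subseteq Z(\ba,\widehat{v'})$, and the irreducible components are the maximal orbit closures, hence the $C_F$ with minimal $F$, i.e. with $\dim F=0$, since every tile contains a vertex.

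For (ii) I would identify the orbit closure $C_F=C_v$ with $X(\Sigma_F)$ by covering it with the affine charts $\bigl(L_v\cap S(E)_{\widehat{v'}}\bigr)/K$ for $v'\in\cMp$ with $v'\le v$, equivalently for tiles $F'\supseteq F$. Writing $J_w:=\{e\mid\widehat w_e\neq0\}$, such a chart is $L_v\cap S(E)_{\widehat{v'}}\cong\gm^{J_{v'}}\times\bA^{J_v\smallsetminus J_{v'}}$, and its categorical quotient by $K$ is the affine toric variety whose cone in $N_\R$ is generated by the images under $\ba$ of the free directions $e\in J_v\smallsetminus J_{v'}$, namely $-\ba_e$ when $\tv_e=+$ and $+\ba_e$ when $\tv_e=-$, modulo the lineality space $V_F:=\operatorname{span}\{\ba_e\mid\tv_e=0\}$ coming from the subtorus $\gm^{\{e\,:\,\tv_e=0\}}$ stabilizing $L_v^\circ$. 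Passing from $F$ to $F'$ replaces each constraint $\tv_e=\pm$ (for $e\in J_v\smallsetminus J_{v'}$) by the segment $[-1,1]\ba_e$, which is exactly the statement that this cone equals $\sigma_{F,F'}=\R_{\geq0}(F'-F)$; so the fan assembled from the charts is $\Sigma_F$, and a stabilizer count gives the matching dimension $\dim C_F=\dim N_\R-\dim F$. This fan identification is the technical heart of the argument: the quotient lattice, the lineality, and the sign of each ray must be matched precisely with the local fan $\Sigma_F$, and I expect it to be the main obstacle.

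Granting (ii), part (iii) is essentially formal. The toric variety $C_F\cong X(\Sigma_F)$ is proper over a point iff $\Sigma_F$ is complete in $N_\R/V_F$, and $\Sigma_F$ is complete iff $Z$ locally surrounds $F$, i.e. iff a relative-interior point of $F$ lies in the interior of $Z$, that is iff $F\not\subseteq\partial Z$. If $\dim Z<\dim N_\R$ the rays $\pm\ba_e$ span only a proper subspace, so no $\Sigma_F$ can be complete and $C(\cT)=\emptyset$; if $\dim Z=\dim N_\R$ the maximal tiles already give complete (indeed zero-dimensional) fans, so the core is nonempty. Since the class of tiles with $C_F$ proper is closed under passing to larger tiles, $C(\cT)$ is exactly the union of those $C_F$ with $F\not\subseteq\partial Z$.

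Finally, for (iv) I would restrict the $\gm$-action to the linear slice $L_v\cong\bA^E$ attached to a vertex $\{\eta\}$, so $\tv\in\{+,-\}^E$ and $\eta=\sum_{\tv_e=+}\ba_e-\sum_{\tv_e=-}\ba_e$; here $\gm$ scales every coordinate with weight one. Comparing this with the residual $T$-action on $C_{\{\eta\}}$ and solving for the lift $\lambda\in\Z^E$ whose induced cocharacter reproduces the uniform scaling modulo $K$ exhibits the $\gm$-action as that of a single cocharacter of $T$; a direct bookkeeping of the weights on the coordinate functions identifies this cocharacter with the vertex $\eta\in N\cong\Hom(\gm,T)$. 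This last step is a routine weight computation once the slice $L_v$ and its $T$-action have been pinned down in (ii).
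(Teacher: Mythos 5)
Your overall strategy---stratifying $\mu_E^{-1}(0)$ by the tori $L_v^\circ$, determining which strata survive in $S(E)_{u,\cM}$, identifying the charts of $C_F$ toric-geometrically, and then deducing (iii) and (iv)---is essentially the paper's, the one real divergence being that in (ii) you compute directly with the quotient charts $(L_v\cap S(E)_{\widehat{v'}})/K$ where the paper passes through the Lawrence fan and computes $\sigma_{v,v'}\cap N_\R=\sigma_{F,F'}$. Both routes reduce to the same lattice bookkeeping, which you correctly identify as the technical core but leave unexecuted.

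There is, however, a genuine error in your argument for (i). You claim that if $L_v^\circ$ survives, i.e.\ if some $v'\in\cMp$ satisfies $v'\le v$, then $Z(\ba,\tv)$ is a face of the tile $Z(\ba,\widehat{v'})$ and hence $v\in\cMp$. The first implication is false: $v'\le v$ only yields the inclusion $Z(\ba,\tv)\subseteq Z(\ba,\widehat{v'})$, and this subzonotope is a face of the tile only when $v$ restricted to the zero set of $v'$ is a covector of the restricted configuration. Concretely, take $N=\Z$, $\ba=(1,-1)$, $u=0$, and the trivial tiling of $Z=[-2,2]$, so that $\cMp=\cV^*(\ba)=\{(0,0),(+,-),(-,+)\}$ and $S(E)_{u,\cM}=S(E)$. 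The stratum $L_{(+,+)}^\circ=\{w_1=w_2=0,\ z_1z_2\neq 0\}$ survives because $(0,0)\le(+,+)$, yet $(+,+)\notin\cMp$ and $Z(\ba,(+,+))=\{0\}$ lies in the interior of $Z$, not on any face. So surviving strata with $v\notin\cMp$ genuinely occur, and your enumeration of the $T$-orbits of $\Cext(\cT)$ is incomplete as argued: you must additionally show that the image of such an $L_v^\circ$ is absorbed into $C_w$ for the maximal $w\in\cMp$ with $w\le v$ (in the example, every invariant $z_iw_j$ vanishes on $L_{(+,+)}$, so its image is the origin, which is $C_{(0,0)}$). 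The same example shows that distinct strata can have equal images, so the distinctness of the orbits $C_F$ for distinct $F\in\cT$---which you assert but never justify---also requires an argument; both points are handled most cleanly via the orbit--cone correspondence on the Lawrence toric variety, where the cones of $\Sigma(\ba,u,\cM)$ are precisely the $\sigma_w$ with $w\in\cMp$ together with their faces. The remaining parts of your proposal are sound modulo the deferred computation in (ii).
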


\begin{proof}
We have $C_v\neq\emptyset$ if and only if $L_v\subset S(E)_{u,\cM}$, which is true if and only
if there exists $w\in\cMp$ with $w\leq v$.  In this case, there is a maximal such $w$, and $C_v = C_w$.
This completes the proof of part (i).  

We prove Part (ii) using the description of the Lawrence fan $\Sigma(\ba,u,\cM)$
in Remark \ref{Lawrence fan}.  Recall that $\Sigma(\ba,u,\cM)$ is a fan in $\tilde N_\R$,
and $X(\ba,u,\cM)$ is acted on by $\tilde T = \spec k[\tilde N^*]$.  
If $F = Z(\ba,\tv)$ for some $v\in\cMp$, then $C_F$ is isomorphic to the closed $\tilde T$-subvariety of $X(\ba,u,\cM)$
associated with the cone $\sigma_v\in\Sigma(\ba,u,\cM)$; its fan consists of the cones
$$\sigma_{v,v'} := \R_{\geq 0}(\sigma_{v'}-\sigma_v)\subset \tilde N_\R$$
for all $v'\leq v\in\cMp$.  We have an inclusion of $N$ into $\tilde N$ given by sending $\ba_e$
to $\rho_e^+-\rho_e^-$, and this induces an inclusion of $T$ into $\tilde T$.  The smaller torus
$T$ acts on $C_F$ with a dense orbit, hence $C_F$ is isomorphic
to the $T$-variety associated with the intersection of this fan with $N_\R$.  We have
$$\sigma_{v,v'} \cap N_\R = \R_{\geq 0}\{(v_e' - v_e)\ba_e\mid e\in E\} + \R\{a_e\mid v_e = 0\} = \sigma_{F,F'},$$
where $F' = Z(\ba,\tv')$.  Thus $C_F$ is $T$-equivariantly isomorphic to $X(\Sigma_F)$.

Part (iii) follows from part (ii) and the observation that $\Sigma_F$ is complete if and only if $F$
does not lie in the boundary of $Z$.
For part (iv), we may write $\{\eta\} = F(\ba,\tv)$ for some maximal $v\in\cMp$.
The action of $\gm$ on $L_{\tv}$ is induced by the cocharacter $\tv\in\Z^E\cong\Hom(\gm,\gm^E)$, 
which descends to the cocharacter
$\eta = \sum \tv_e\ba_e \in N\cong\Hom(\gm,T)$.
\end{proof}

\begin{example}
Let $\cT$ be the tiling in Example \ref{armo}, consisting of $r$ adjacent intervals of length 2.
By Proposition \ref{cext}, each of the intervals corresponds to a $T$-fixed point,
each of the internal vertices corresponds to a projective line connecting two adjacent fixed points,
and the two external vertices correspond to affine lines.  Thus $C(\cT)$ is isomorphic to a chain
of $r-1$ projective lines, and $\Cext(\cT)$ is obtained from $C(\cT)$ by adding an affine line at either end of the chain.
\end{example}

\begin{example}
Let $\cT$ be either of the two tilings of a hexagon by cubes in Example \ref{ptwo}.  
The variety $Y(\cT)$ is isomorphic to $T^*\mathbb{P}^2$.  The core $C(\cT)$ is equal to the zero section, which is represented
by the single internal vertex.  The extended core consists of the conormal varieties to the toric strata of $\mathbb{P}^2$.
The three vertices which lie on only two edges represent the conormal varieties to the three fixed points, each of which is isomorphic to $\bA^2$.
Three three vertices which lie on three edges represent the conormal varieties to the three coordinate lines, each of which is isomorphic
to the blow-up of $\bA^2$ at the origin.
\end{example}

\begin{corollary}\label{reconstruct}
If $\cT$ and $\cT'$ are tilings in $N_\R$, then $Y(\cT)$ is $T\times\gm$-equivariantly isomorphic to $Y(\cT')$
if and only if $\cT = \cT'$.
\end{corollary}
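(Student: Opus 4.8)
The ``if'' direction is trivial, so I would concentrate on the converse: recovering the tiling $\cT$, as an explicit collection of subsets of $N_\R$, from the $T\times\gm$-equivariant isomorphism type of $Y(\cT)$. The essential input is Proposition \ref{cext}. The first point to establish is that the extended core $\Cext(\cT)=\mu^{-1}(0)$ is intrinsic to the $T\times\gm$-variety $Y(\cT)$: as recorded in Section \ref{chp:affinecase}, the moment map $\mu\colon Y(\cT)\to\Lie(T)^*$ is canonically determined, its components being a basis for the space of $T$-invariant regular functions of $\gm$-weight two, so that $\Cext(\cT)$ is precisely the common zero locus of those functions. Consequently any $T\times\gm$-equivariant isomorphism $\Phi\colon Y(\cT)\to Y(\cT')$ restricts to a $T\times\gm$-equivariant isomorphism $\Cext(\cT)\to\Cext(\cT')$.

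Next I would pass to combinatorics. By Proposition \ref{cext}(i), the closures of $T$-orbits in $\Cext(\cT)$, ordered by inclusion, are in inclusion-reversing bijection with $\cT$ ordered by the face relation, and likewise for $\cT'$. Since $\Phi$ carries $T$-orbit closures to $T$-orbit closures and preserves inclusions, it induces an isomorphism of face posets $\cT\cong\cT'$ under which the minimal elements — the vertices, corresponding to the irreducible components of the extended core — are matched with minimal elements.

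The crucial step is to show that this poset isomorphism respects the actual positions of the vertices in $N$. For a vertex $\{\eta\}\in\cT$, Proposition \ref{cext}(iv) says that $\gm$ acts on the irreducible component $C_{\{\eta\}}$ through the cocharacter $\eta\in N\cong\Hom(\gm,T)$, while Proposition \ref{cext}(ii) identifies $C_{\{\eta\}}$ with a toric variety $X(\Sigma_{\{\eta\}})$ on which $T$ acts faithfully. Restricting $\Phi$ to $C_{\{\eta\}}$ and comparing the $T$- and $\gm$-actions on the dense $T$-orbit, the $T\times\gm$-equivariance forces the matching component of $\Cext(\cT')$ to be $C_{\{\eta'\}}$ with $\eta'=\eta$: on the dense orbit the cocharacters $\eta$ and $\eta'$ act identically, and faithfulness of the $T$-action gives $\eta=\eta'$. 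Thus the poset isomorphism matches the vertex of $\cT$ at $\eta$ with the vertex of $\cT'$ at the same point $\eta\in N$.

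Finally, for any $F\in\cT$ the set of vertices of $F$ is recovered from the poset as the set of minimal elements $\leq F$, equivalently as the irreducible components of $\Cext(\cT)$ containing $C_F$; since each element of $\cT$ is an integral zonotope, it equals the convex hull of its vertices. Combining this with the previous two steps, the tile $F$ and its image $F'$ under the induced bijection have identical vertex sets in $N$, whence $F=F'$; therefore $\cT=\cT'$. The main obstacle is the third step: a $T$-equivariant isomorphism of cores alone does \emph{not} determine $\cT$ — for instance the two nontrivial tilings of the hexagon in Example \ref{ptwo} both yield $T^*\mathbb{P}^2$ — and it is precisely the $\gm$-action, through Proposition \ref{cext}(iv), that pins down the geometric location of each vertex. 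Once the positions are in hand, the convex-hull reconstruction is routine.
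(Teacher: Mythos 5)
Your proof is correct and follows essentially the same route as the paper's: both recover the vertices of $\cT$ from the $\gm$-action on the irreducible components of the extended core via Proposition \ref{cext}(i),(iv), and then pin down the rest of the tiling. The only cosmetic differences are that the paper finishes by recovering the local fans at the vertices using part (ii) whereas you reconstruct each tile as the convex hull of the vertices below it in the face poset, and that you make explicit (via uniqueness of the $\gm$-homogeneous moment map) the step that the extended core is intrinsic, which the paper leaves implicit.
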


\begin{proof}
By parts (i) and (iv) of Proposition \ref{cext}, we can recover the vertices of $\cT$ by looking at the action of $\gm$ on the irreducible components
of the extended core.  By part (ii), we can recover the fans at these vertices in terms of the action of $T$, and this completely determines the tiling $\cT$.\end{proof}

\begin{corollary}\label{pos wts}
Let $\cT$ be a tiling of an integral zonotope $Z\subset N_\R$.
The $\gm$-variety $Y(\cT)$ has non-negative weights if and only if $0\in Z$,
and it has positive weights if and only if $0$ is contained in the interior of $Z$.
\end{corollary}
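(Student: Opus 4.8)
The plan is to push everything down to the affine variety $Y(Z)$ and then read the $\gm$-grading off an explicit monomial basis. First, Corollary \ref{affinization} gives a $T\times\gm$-equivariant isomorphism $Y(\cT)_0\cong Y(Z)$, so $k[Y(\cT)]\cong k[Y(Z)]$ as $\gm$-graded rings, and both weight conditions depend only on $Z$; it therefore suffices to prove the two equivalences for $Y(Z)$ itself. Writing $Z=Z(\ba,u)$ for a primitive spanning configuration $\ba\in N^E$ and sign vector $u$ (Remark \ref{psa}), I would use the monomial basis of $k[S(E)_u]$ from the proof of Lemma \ref{append}: the monomials $f_{(r,s)}=\prod_e z_e^{r_e}w_e^{s_e}$ with $r_e<0$ only if $u_e=+$ and $s_e<0$ only if $u_e=-$. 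Such a monomial is $K$-invariant precisely when its $\gm^E$-weight $s-r$ is pulled back from a character of $T=\gm^E/K$, i.e. when $s-r=(\langle m,\ba_e\rangle)_e$ for some $m\in N^*$, which is then its $T$-weight; its $\gm$-weight is the total degree $\sum_e(r_e+s_e)$. Since $K$ is diagonalizable these monomials span $k[Y(Z)]$, and because $\ba$ is spanning such a monomial is nonconstant exactly when $m\neq 0$.

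The key computation is to minimize the $\gm$-weight over all $K$-invariant monomials of a fixed $T$-weight $m$. Setting $c_e:=r_e+s_e$ and $d_e:=s_e-r_e=\langle m,\ba_e\rangle$, the sign constraints read $c_e\geq|d_e|$ when $u_e=0$, $c_e\geq -d_e$ when $u_e=+$, and $c_e\geq d_e$ when $u_e=-$; each lower bound has the same parity as $d_e$, hence is realized by an honest integral monomial. Minimizing $\sum_e c_e$ then yields
$$
W(m) \;=\; \sum_{u_e=0}|\langle m,\ba_e\rangle| \;-\; \sum_{u_e=+}\langle m,\ba_e\rangle \;+\; \sum_{u_e=-}\langle m,\ba_e\rangle \;=\; -\min_{\eta\in Z}\langle m,\eta\rangle ,
$$
the last equality being immediate from the Minkowski-sum description of $Z=Z(\ba,u)$. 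Thus the minimal $\gm$-weight among $K$-invariant functions of $T$-weight $m$ is exactly $-\min_{\eta\in Z}\langle m,\eta\rangle$.

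From here the two equivalences are formal convex geometry. For non-negative weights: if $0\in Z$ then $\min_{\eta\in Z}\langle m,\eta\rangle\leq 0$ for every $m$, so $W(m)\geq 0$ and every spanning monomial has non-negative weight; conversely if $0\notin Z$ a strictly separating hyperplane gives an integral $m$ with $\min_{\eta\in Z}\langle m,\eta\rangle>0$, hence a monomial of weight $W(m)<0$. For positive weights: if $0$ lies in the interior of $Z$ then $\min_{\eta\in Z}\langle m,\eta\rangle<0$ for all $m\neq 0$, so $W(m)>0$, and a direct check shows the only $K$-invariant monomials of $\gm$-weight $0$ are those of $T$-weight $0$, which are constants, giving $k[Y(Z)]_0=k$. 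Conversely, if $0\in Z$ lies on the boundary, a supporting hyperplane at $0$ provides a nonzero integral $m$ with $\min_{\eta\in Z}\langle m,\eta\rangle=0$, i.e. $W(m)=0$, producing a nonconstant ($m\neq 0$) $K$-invariant monomial of weight $0$, so the weights are not positive.

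The main obstacle I anticipate is the purely ring-theoretic point that the monomials I exhibit really give nonzero—and, in the boundary case, nonconstant—elements of $k[Y(Z)]$, rather than functions that die upon restriction to $\mu_\ba^{-1}(0)$ or collapse to scalars. I would dispose of this by observing that the locus of $S(E)_u\cap\mu_\ba^{-1}(0)$ on which all coordinates are nonzero is dense, so no monomial restricts to $0$, and that a $K$-invariant function of nonzero $T$-weight is automatically nonconstant. Everything else, including the minimization and the separation arguments distinguishing $0\in Z$ from $0\in\becircled{Z}$, is routine once the identification $W(m)=-\min_{\eta\in Z}\langle m,\eta\rangle$ is established.
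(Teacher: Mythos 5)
Your proof is correct, but it takes a genuinely different route from the paper's. The paper never touches the coordinate ring directly: it first observes that, since the moment map $\mu$ has weight $2$, the weight conditions for $Y(\cT)$ are equivalent to the same conditions for the extended core $\Cext(\cT)=\mu^{-1}(0)$, and then invokes Proposition \ref{cext} to identify the irreducible components of $\Cext(\cT)$ with the toric varieties $X(\Sigma_{\{\eta\}})$ indexed by vertices $\eta$ of $\cT$, on which $\gm$ acts through the cocharacter determined by $\eta$; the two conditions then become ``$0\in \eta+|\Sigma_{\{\eta\}}|$ for every vertex'' and its interior analogue, which hold for all vertices exactly when $0\in Z$, respectively $0\in\becircled{Z}$. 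You instead collapse everything to the affinization via Corollary \ref{affinization} and compute the $T\times\gm$-character of $k[Y(Z)]$ on the monomial basis of $K$-invariants, identifying the minimal $\gm$-weight in $T$-weight $m$ with the support value $-\min_{\eta\in Z}\langle m,\eta\rangle$ and finishing with separating/supporting hyperplanes. Your route is more elementary and self-contained (it needs only Corollary \ref{affinization} and the definition of $Y(\ba,u)$, not the structure theory of the extended core) and yields as a by-product the full character of $k[Y(Z)]$; the paper's route reuses machinery already needed for Corollary \ref{reconstruct} and is geometrically more transparent, seeing each vertex of $\cT$ as a Lagrangian component. Two small points to tighten in yours: for the claim that no monomial dies on $\mu_\ba^{-1}(0)\cap S(E)_u$ you do not need density of the open torus locus, only that it is nonempty, which is immediate by taking $z_e=1$ and $w_e=m(\ba_e)$ for a generic $m\in N^*\otimes k$; and the assertion that a $K$-invariant monomial is nonconstant ``exactly when'' $m\neq 0$ is false in one direction (e.g.\ $z_ew_e$ has $T$-weight $0$), though you only ever use the true direction, and your separate argument that $\gm$-weight $0$ plus $T$-weight $0$ forces the constant monomial is what actually carries the positivity claim.
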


\begin{proof}
Since the moment map $\mu$ has weight 2,
$Y(\cT)$ has non-negative (respectively positive) weights if and only if $\Cext(\cT) = \mu^{-1}(0)$ has non-negative (respectively positive) weights.  
This in turn is the
case if and only if each irreducible component of $\Cext(\cT)$ has non-negative (respectively positive) weights.  
By Proposition \ref{cext},
the irreducible components of $\Cext(\cT)$ are indexed by the vertices of $\cT$.
Given a vertex $\eta$, the component $C_{\{\eta\}}$ is $T$-equivariantly isomorphic to the toric variety $X(\Sigma_{\{\eta\}})$,
and $\gm$ acts via $-\eta\in N\cong\Hom(\gm,T)$.  This action has non-negative weights if and only if $-\eta$ lies in
the support of $\Sigma_{\{\eta\}}$, which is equivalent to the statement that 0 lies in $\eta + |\Sigma_{\{\eta\}}|$.
This holds for every vertex of $\cT$ if and only if 0 lies in $Z$.
The action has positive weights if and only if $-\eta$ lies in the interior of
the support of $\Sigma_{\{\eta\}}$, which is equivalent to the statement that 0 lies in the interior of $\eta + |\Sigma_{\{\eta\}}|$.
This holds for every vertex of $\cT$ if and only if 0 lies in the interior of $Z$.
\end{proof}

\begin{example}
Suppose that $N=\Z$ and $Z = [\ell-r,\ell+r]$ for a positive integer $r$ and an arbitrary integer $\ell$.
When $\ell=0$, we showed in Example \ref{kleinian} that $k[Y(Z)] \cong k[a,b,c]/\langle a^r-bc\rangle$, 
where the $T\times\gm$-weights of $a$, $b$, and $c$ are $(0,2)$, $(-1,r)$, and $(1,r)$, respectively.
More generally, Corollary \ref{twist} tells us that the $T\times\gm$-weights of $a$, $b$, and $c$
are $(0,2)$, $(1,r+\ell)$, and $(-1,r-\ell)$.  The $\gm$-weights are all positive if and only if $|\ell|<r$,
which is equivalent to the condition that 0 lies in the interior of $Z$.
\end{example}

Corollary \ref{pos wts} provides the last ingredient in the proof of the main result of this paper.

\begin{theorem}\label{main}
Let $Z\in N_\R$ be an integral zonotope with $0$ in its interior and $\cT$ a tiling of $Z$.
Then $Y(\cT)$ is a $T$-hypertoric variety.
\end{theorem}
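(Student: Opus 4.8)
The plan is to verify, clause by clause, that $Y(\cT)$ meets every requirement of Definition \ref{def}; since almost all of the substantive work has already been carried out, the proof is essentially an assembly of the preceding results. Recall that a $T$-hypertoric variety is a conical symplectic variety of dimension $2\dim T$ carrying an effective Hamiltonian $T$-action that commutes with the $\gm$-action, and that ``conical'' unpacks as the three conditions: convex, positive weights, and Poisson bracket of degree $-2$. I would organize the argument around exactly this checklist.

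First, $Y(\cT)$ is symplectic by Proposition \ref{symp}, which in particular supplies normality together with the extension of the symplectic form to a resolution. For the conical condition I would invoke three earlier results in turn: convexity (with $Y(\cT)_0\cong Y(Z)$) is precisely Corollary \ref{affinization}; positivity of the $\gm$-weights follows from Corollary \ref{pos wts}, using the standing hypothesis that $0$ lies in the interior of $Z$; and the Poisson bracket has degree $-2$ because this property holds on each local model $Y(\ba,u,\cM)$ by the construction of Section \ref{chp:generalcase}, inherited from the weight $-2$ bracket on $k[\mu_\ba^{-1}(0)]^K$ defined in Section \ref{chp:affinecase}.

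It then remains only to record the dimension and the features of the torus action. For the dimension I would choose a vertex $\{\eta\}\in\cT$: by Corollary \ref{open} the subvariety $Y(\{\eta\})$ is open in $Y(\cT)$, and by Corollary \ref{twist} together with Example \ref{zero} its underlying variety is isomorphic to $T^*T$, of dimension $2\dim T$. Because $Y(\cT)$ is irreducible (its affinization $Y(Z)=\spec k[Y(\cT)]$ is the spectrum of a domain, and $\pi$ is proper and birational), this nonempty open set forces $\dim Y(\cT)=2\dim T$. Finally, the $T$-action is Hamiltonian with the $\gm$-homogeneous moment map $\mu$ by the construction of Section \ref{chp:affinecase}; it commutes with the $\gm$-action since both are restrictions of the single $T\times\gm$-action; and it is effective by Proposition \ref{tiling effective}.

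I do not expect a genuine obstacle here: as the sentence preceding the theorem signals, Corollary \ref{pos wts} furnishes the one missing ingredient, namely positivity of weights, and every remaining condition has already been verified over the course of Sections \ref{chp:affinecase}--\ref{chp:generalcase}. The only item not stated verbatim earlier is the dimension count, and even that is immediate from the local model $Y(\{\eta\})\cong T^*T$ at a vertex, so the entire proof reduces to citing the appropriate prior results in the right order.
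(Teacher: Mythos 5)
Your proposal is correct and follows essentially the same route as the paper: both proofs verify the checklist of Definition \ref{def} by citing Proposition \ref{symp} for symplecticity, Corollary \ref{affinization} for convexity, Corollary \ref{pos wts} for positive weights, Proposition \ref{tiling effective} for effectiveness, and the construction itself for the Hamiltonian $T$-action and the degree $-2$ bracket. The only difference is that you spell out the dimension count via the vertex local model $Y(\{\eta\})\cong T^*T$, whereas the paper simply records the dimension as holding ``by construction''; this is a harmless (and arguably welcome) elaboration, not a different argument.
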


\begin{proof}
By construction, $Y(\cT)$ is a Poisson $T\times\gm$-variety of dimension $2\dim T$, the action of $T$ is Hamiltonian,
and the Poisson bracket has weight -2 with respect to the action of $\gm$.  The action of $T$ is effective by Proposition \ref{tiling effective}.
Corollary \ref{affinization} and Proposition \ref{symp} tell us that $Y(\cT)$ is conical symplectic variety.
Finally, Corollary \ref{pos wts} tells us that the action of $\gm$ has positive weights, which completes the proof.
\end{proof}

\begin{conjecture}\label{all}
Let $Y$ be a $T$-hypertoric variety.  There exists an integral zonotopal tiling $\cT$ in $N_\R$ such that
$Y$ is isomorphic to $Y(\cT)$ as a Poisson $T\times\gm$-variety.
\end{conjecture}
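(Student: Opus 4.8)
The plan is to mirror the classification of toric varieties by fans, using the extended core as the bridge between the geometry of $Y$ and the combinatorics of a tiling, exactly as in the reconstruction of Corollary \ref{reconstruct}. Given an abstract $T$-hypertoric variety $Y$ in the sense of Definition \ref{def}, the first step is to produce a canonical $\gm$-homogeneous moment map $\mu\colon Y\to\Lie(T)^*$: since $T$ acts Hamiltonianly and the Poisson bracket has weight $-2$, the moment map is determined up to an additive constant in $\Lie(T)^*$, and the requirement that it be homogeneous of weight $2$ for $\gm$ pins it down uniquely, just as in Section \ref{chp:affinecase}. One then defines the extended core $\Cext:=\mu^{-1}(0)$ and studies it as a $T\times\gm$-variety.

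Second, I would reconstruct a candidate tiling $\cT$ from $\Cext$ by reversing Proposition \ref{cext}. The key structural input is that every irreducible component $C$ of $\Cext$ should be a (possibly non-complete) toric variety for $T$, carrying a dense $T$-orbit, with $\gm$ acting through a single cocharacter $\eta_C\in N$; the fan $\Sigma_C$ of $C$ then plays the role of a local fan, and $\eta_C$ records a candidate vertex. The collection $\{(\eta_C,\Sigma_C)\}$ should assemble into a zonotopal tiling $\cT$ with $\Sigma_{\{\eta_C\}}=\Sigma_C$, where the gluing axioms for tilings are forced by the way the components of $\Cext$ intersect; once these axioms are verified, $\cT$ is a bona fide tiling and $Y(\cT)$ is defined via the Bohne--Dress theorem \cite[7.32]{Ziegler}.

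Third, having produced $\cT$, I would construct a $T\times\gm$-equivariant Poisson isomorphism $Y\cong Y(\cT)$. The strategy is local-to-global: each vertex $\eta$ of $\cT$ gives an open $T^*T\cong Y(\{\eta\})\subset Y(\cT)$, and I would produce a matching $T\times\gm$-invariant open $T^*T\subset Y$ from a neighborhood of the corresponding zero-dimensional stratum of $\Cext$; the conical structure (positive weights, Corollary \ref{pos wts}) should let these local identifications be rigidified and glued, with Proposition \ref{limit} describing the target as the colimit of the pieces $Y(F)$. Matching the symplectic forms is then a codimension argument: both sides are symplectic (Proposition \ref{symp}) and agree on the dense open locus where $\mu$ is a smooth $T$-fibration, so they agree everywhere by normality.

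The main obstacle is the second step, and specifically the claim that $Y$ admits a local model theorem analogous to Sumihiro's theorem in the toric setting. Producing the toric structure on the components of $\Cext$ purely from the axioms---conical symplectic, dimension $2\dim T$, maximal effective abelian symmetry---requires understanding the transverse geometry of $Y$ along its codimension-two symplectic leaves, where one expects type-$A$ Kleinian singularities whose lengths and $T$-weights encode a primitive spanning configuration $\ba$. Showing that the maximal abelian symmetry forces exactly type $A$ (and not a worse ADE type or a non-toric transversal), and that the resulting transverse data is globally consistent enough to yield a single zonotope $Z$ with $0$ in its interior, is where the real content lies; in effect one must first prove the affine case ($Y$ affine $\Rightarrow Y\cong Y(Z)$) before the general statement, and it is this affine classification that I expect to be the crux.
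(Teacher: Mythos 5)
The statement you are trying to prove is Conjecture \ref{all}: the paper offers no proof of it, and explicitly says so in the remark that follows, where the authors sketch two conjectural strategies. Your proposal is essentially a more detailed version of the paper's ``second approach'': take the unique $\gm$-homogeneous moment map, form $\Cext = \mu^{-1}(0)$, read off a cocharacter $\eta_C$ from each irreducible component, hope that the components are toric varieties whose fans assemble into a zonotopal tiling with vertex set $\{\eta_C\}$, and then glue local models. So you have not found a route the authors missed; you have rediscovered their own proposed outline, and the places where your argument says ``should'' are exactly the places where the conjecture remains open.

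To be concrete about where the gaps are: (1) nothing in Definition \ref{def} yet guarantees that the irreducible components of $\Cext$ are toric varieties with a dense $T$-orbit, nor even that $\Cext$ is a Lagrangian union of finitely many such components --- in the paper this is deduced from the explicit construction via Proposition \ref{cext}, not from the axioms; (2) even granting toric components with local fans $\Sigma_C$ at vertices $\eta_C$, there is no argument that these local fans are the local fans of a single zonotopal tiling of a single integral zonotope $Z$ with $0$ in its interior --- the Bohne--Dress theorem tells you which tilings of a given zonotope exist, not that a compatible family of complete fans glues to one; and (3) your final gluing step presumes a local normal form ($T^*T$ near each vertex stratum, and more generally $Y(F)$ near each stratum) for an abstract conical symplectic variety, which is precisely the affine classification ($Y$ affine $\Rightarrow Y \cong Y(Z)$) that you correctly flag as the crux and that the paper also leaves open. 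Your closing paragraph is an accurate self-assessment: this is a plausible plan, matching the authors' own, but it is not a proof.
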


\begin{remark}
Though we do not have a proof of Conjecture \ref{all}, we briefly outline two possible approaches,
based on two different (conjectural) ways to obtain a zonotopal tiling from an abstract $T$-hypertoric variety.
\\\\
{\bf First approach.}
For any $T\times\gm$-variety $U$, let 
$$Z(U) := \operatorname{Conv}\{\eta\in N\mid \text{$U[\eta]$ has non-negative weights}\}.$$
We define a {\bf \boldmath{$T$}-quasihypertoric variety} to be a symplectic, convex $T\times\gm$-variety $U$
such that that action of $T$ is effective and Hamiltonian and $Z(U)\neq\emptyset$.
The proof of Theorem \ref{main}, along with Corollary \ref{twist}, demonstrates that
$Y(\cT)$ is $T$-quasihypertoric for any zonotopal tiling $\cT$.  
Furthermore, we conjecture that every affine $T$-quasihypertoric variety is of this form $Y(Z)$ for a single zonotope $Z$, 
and that all morphisms between affine $T$-quasihypertoric varieties are of the form $Y(\ba,u)\to Y(\ba,u')$
for some $u'\leq u$, induced by the inclusion of $S(E)_u$ into $S(E)_{u'}$.

Let $Y$ be a $T$-hypertoric variety, and let $Z := Z(Y)$.
Consider the set $\mathcal{S}$ of all affine $T$-quasihypertoric subvarieties of $Y$, and let 
$\cT := \{Z(U)\mid U\in \mathcal{S}\}$.  Then $\cT$ is a collection of zonotopes, each contained in $Z$,
any pair of which intersects in a face or not at all.  We conjecture that $\cT$ is in fact a tiling of $Z$
(that is, that every element of $Z$ is contained in some element of $\cT$), and that $Y\cong Y(\cT)$.
\\\\
{\bf Second approach.}
Given a $T$-hypertoric variety $Y$, let $\mu:Y\to\Lie(T)^*$ be the unique $\gm$-homogeneous $T$-moment map. 
We may define the extended core $\Cext := \mu^{-1}(0)\subset Y$ as before, 
and it is necessarily a finite union Lagrangian $T\times\gm$-subvarieties.
For each irreducible component $C\subset \Cext$, $\gm$ must act via a cocharacter $\eta_C\in\Hom(\gm,T)\cong N$.
We conjecture that there exists an integral zonotopal tiling $\cT$ in $N_\R$ with vertex set equal
to $\{\eta_C\mid \text{$C\subset \Cext$ a component}\}$ such that $C$ is $T$-equivariantly isomorphic to
$X(\Sigma_{\{\eta_C\}})$, and that $Y\cong Y(\cT)$.
\end{remark}

\section{Divisors and line bundles}
Let $Z\subset N_\R$ be an integral zonotope with 0 in its interior, and let $\cT$ be a tiling of $Z$.
The purpose of this section is to study $T$-equivariant divisors and line bundles on $Y := Y(\cT)$ in combinatorial terms.
Since we will not be concerned with the action of $\gm$ in this section, we will assume for simplicity that $Z$ is centered
at the origin, so that we may write $Z = Z(\ba)$ for some primitive spanning arrangement $\ba\in N^E$.
We may also write $\cT = \cT(\ba,\cM)$ for some affine oriented matroid $\cM$ over $\cV^*(\ba)$.

We begin by studying the Weil divisor class group $\Cl(Y)$,
along with the $T$-equivariant analogue $\Cl_T(Y) := A^T_{\dim Y - 1}(Y)$.  We note that the forgetful map $\Psi:\Cl_T(Y)\to\Cl(Y)$
is surjective, with kernel isomorphic to $N^*$ \cite[2.3]{Brion-Chow}.
For each element $e\in E$, let $D^+_e\subset Y$ be the divisor defined by the vanishing of $z_e$;
that is, $D^+_e$ is the image of $V(z_e)\cap S(E)_\cM \cap \mu_\ba^{-1}(0)$ in $Y$.
Similarly, let $D_e^-\subset Y$ be the divisor defined by the vanishing of $w_e$.
Since the product $z_ew_e$ is a $T$-invariant function on $Y$, we have $[D_e^+] + [D_e^-] = 0 \in \Cl_T(Y)$.
Consider the homomorphism $$\Phi:\Z^E\to \Cl_T(Y)$$ taking the
standard generators of $\Z^E$ to $\{[D_e^+]\mid e\in E\}$.

\begin{proposition}\label{generators}
The map $\Phi$ is an isomorphism, and it takes $N^*\cong\Lambda^\perp\subset\Z^E$
to $\ker(\Psi)$.
\end{proposition}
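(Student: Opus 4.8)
The plan is to organize the argument around the exact sequence of Brion already quoted, namely $0\to N^*\xrightarrow{\iota}\Cl_T(Y)\xrightarrow{\Psi}\Cl(Y)\to 0$ with $\operatorname{im}\iota=\ker\Psi$, and to compare it with the elementary sequence $0\to\Lambda^\perp\hookrightarrow\Z^E\xrightarrow{p}\Z^E/\Lambda^\perp\to 0$. Here $\Phi$ is the candidate middle vertical arrow, and the standard identification $N^*\cong\Lambda^\perp$ is $m\mapsto(m(\ba_e))_{e\in E}=:\ba^*(m)$, which lands in $\Lambda^\perp$ because $\sum_e\lambda_e m(\ba_e)=m(\sum_e\lambda_e\ba_e)=0$ for $\lambda\in\Lambda$. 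I would produce compatible vertical maps on the sub- and quotient terms and then invoke the five lemma. Thus the proof splits into two claims: first, that $\Phi$ carries $\Lambda^\perp$ isomorphically onto $\ker\Psi$ compatibly with $\iota$ and $\ba^*$; second, that $\Phi$ induces an isomorphism $\overline\Phi\colon\Z^E/\Lambda^\perp\to\Cl(Y)$.

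For the first claim, fix $m\in N^*$ and consider the Laurent monomial $g_m:=\prod_{e\in E}z_e^{m(\ba_e)}$. Since $z_e$ has $\gm^E$-weight $-\epsilon_e$, the weight of $g_m$ is $-\ba^*(m)\in\Lambda^\perp$, which lies in the kernel of $X^*(\gm^E)\to X^*(K)$; hence $g_m$ is $K$-invariant and descends to a $T$-semiinvariant rational function on $Y$ of weight $-m$. By the definition of $D_e^+$ its divisor on $Y$ is $\sum_e m(\ba_e)\,D_e^+$, no $D_e^-$ occurring since $w_e$ does not appear. In the $T$-equivariant class group this says exactly that $\sum_e m(\ba_e)[D_e^+]=\iota(\pm m)$, i.e.\ $\Phi(\ba^*(m))=\iota(\pm m)$. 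Consequently $\Phi$ restricts on $\Lambda^\perp$ to $\pm\iota\circ(\ba^*)^{-1}$, an isomorphism onto $\ker\Psi$; in particular $\Phi(\Lambda^\perp)=\ker\Psi$ and $\Psi\Phi$ annihilates $\Lambda^\perp$, so $\overline\Phi$ is well defined.

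For the second claim I would compute $\Cl(Y)$ through the ambient Lawrence toric variety $X:=X(\ba,\cM)$ of Remark \ref{Lawrence fan}, in which $Y$ sits as the common zero locus of the components of $\mu_\ba$. The $\tilde T$-invariant prime divisors of $X$ are the ray divisors $D_{\rho_e^\pm}$, and the usual toric presentation gives $\Cl(X)=\Z^{\{\rho_e^\pm\}}/\tilde N^*$. Since $\tilde N^*=\{(r,s):r-s\in\Lambda^\perp\}$ is the perpendicular of the antidiagonal copy of $\Lambda$ (Remark \ref{lawrence}), the assignment $(r,s)\mapsto r-s$ identifies $\Cl(X)$ with $\Z^E/\Lambda^\perp$, sending $[D_{\rho_e^+}]\mapsto\epsilon_e$ and $[D_{\rho_e^-}]\mapsto-\epsilon_e$ (up to an overall sign); note this already exhibits the relations $[D_{\rho_e^+}]+[D_{\rho_e^-}]=0$. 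It then remains to transport this along $Y\hookrightarrow X$: restriction of Weil divisor classes should be an isomorphism $\Cl(X)\xrightarrow{\sim}\Cl(Y)$ carrying $[D_{\rho_e^\pm}]$ to $[D_e^\pm]$, whence $\overline\Phi$ becomes the identity of $\Z^E/\Lambda^\perp$ and is in particular an isomorphism. Granting the two claims, the five lemma applied to the two short exact sequences shows that $\Phi$ is an isomorphism, and the first claim already records $\Phi(\Lambda^\perp)=\ker\Psi$.

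The main obstacle is the comparison step $\Cl(X)\cong\Cl(Y)$. One must show that $Y$ is a nondegenerate complete intersection in $X$, meeting every torus orbit of $X$ in the expected codimension $\dim K$, so that each ray divisor $D_{\rho_e^\pm}$ restricts to an honest (and distinct) prime divisor $D_e^\pm$ on $Y$ and restriction induces an isomorphism on divisor class groups. This transversality is essentially the nondegeneracy of the moment-map equations $\mu_\ba$ relative to the orbit stratification of $X$, and it is the one place where the linear-symplectic structure of $\mu_\ba$, rather than pure toric combinatorics, enters; once it is in hand, the reduction of equivariance and the final diagram chase are formal.
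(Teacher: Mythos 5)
Your first claim is fine: the monomials $g_m=\prod_e z_e^{m(\ba_e)}$ are exactly how one sees that $\Phi$ carries $\Lambda^\perp$ into $\ker(\Psi)$ compatibly with the identification $N^*\cong\Lambda^\perp$ (modulo checking that each $D_e^\pm$ is a prime divisor and that $z_e$ vanishes to order one along $D_e^+$, which you should at least remark on). The genuine gap is your second claim, which is where all the content lives: you reduce everything to the assertion that restriction induces an isomorphism $\Cl(X)\xrightarrow{\ \sim\ }\Cl(Y)$ from the Lawrence toric variety, and you propose to get this from the statement that $Y$ is a complete intersection in $X$ meeting every torus orbit in the expected codimension. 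That transversality, even once established, does not yield an isomorphism of Weil divisor class groups: it guarantees that each $D_{\rho_e^\pm}\cap Y$ has codimension one, but it gives neither surjectivity (why should every divisor class on $Y$ be supported on $\bigcup D_e^\pm$?) nor injectivity (why should a combination of the $D_e^\pm$ that is principal on $Y$ come from a principal divisor on $X$?) of the restriction map. There is no general Lefschetz-type theorem for class groups of non-proper complete intersections that you could cite here, and the paper's own comparison between $X$ and $Y$ (Lemma \ref{same pic}) concerns only Cartier divisors and is itself deduced from Proposition \ref{support}, hence ultimately from the very proposition you are proving --- so it cannot be used without circularity.

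The paper avoids the ambient variety entirely and works on $Y$ directly. Surjectivity of $\Phi$ is an excision argument: $\Cl_T(Y)$ is generated by the $[D_e^\pm]$ because the complement $Y\smallsetminus\bigcup_e D_e^\pm$ carries a free $T$-action whose quotient is a linear section of a torus (in the coordinates $z_ew_e$), hence an open subset of affine space with trivial class group. Injectivity, and the identification of $\ker(\Psi)$ with $\Lambda^\perp$, are obtained by restricting to the extended-core components $C_F$ for vertices $F\in\cT$: by Proposition \ref{cext}(ii) these are honest toric varieties $X(\Sigma_F)$, so their (equivariant) class groups are known by \cite[4.1.3]{CLS}, and the composite $\Z^E\to\Cl_T(Y)\to\bigoplus_F\Cl_T(C_F)$ is visibly injective. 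If you want to salvage your five-lemma framework, replace the comparison with $X$ by this restriction to the core components; as written, the key isomorphism is asserted rather than proved, and the mechanism you offer for it would not suffice.
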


\begin{proof}
We first prove surjectivity.
Since $[D_e^+] + [D_e^-] = 0$, this is equivalent to the statement that $\Cl_T(Y(\cT))$ 
is generated by $\{[D^\pm_e]\mid e\in E\}$, which is in turn equivalent to the statement that
$\Cl_T\!\big(Y\smallsetminus \bigcup_{e\in E} D^\pm_e\big)$ is trivial.
The group $T$ acts freely on $Y\smallsetminus \bigcup_{e\in E} D^\pm_e$, 
so we need only show that the Weil divisor class group of the quotient is trivial.
The is isomorphic to the subvariety of the torus with coordinates $\{z_ew_e\mid e\in E\}$ defined by the
vanishing of $\mu_\ba$.  In particular, it is isomorphic to an open subspace of an affine
space, and therefore has trivial Weil divisor class group.  Thus $\Phi$ is surjective.

To prove injectivity, let $F\in\cT$ be a vertex and let $C_F\subset Y$ be the corresponding
extended core component.  Then $\Cl_T(C_F)$ has a basis indexed by the rays of $\Sigma_F$ \cite[4.1.3]{CLS},
which are in bijection by Proposition \ref{cext}(ii) with the edges of $\cT$ incident to $F$.
If $F' = Z(\ba,u')$ is an edge incident to $F$, then 
the map from $\Cl_T(Y)$ to $\Cl_T(C_F)$ takes $[D_e^+]$ to the basis element indexed by $F'$
if $u'_e = 0$ and to zero otherwise.
For every $e\in E$, there exists
a positive covector $u'\in\cMp$ with $u'_e\neq 0$ and $Z(\ba,u')$ an edge incident to $F$, so
the composition
$$\Z^E\to \Cl_T(Y)\to\bigoplus_{\substack{F\in\cT\\ \dim F = 0}}\Cl_T(C_F)$$
is injective.  Since the first map in this composition is equal to $\Phi$, $\Phi$ is injective, as well.

An element of $\Z^E$ maps to zero in $\Cl(Y)$ if and only if it maps to zero in $\Cl(C_F)$ for every $F$.
Again by \cite[4.1.3]{CLS}, the set of elements with this property is equal to $\Lambda^\perp$.
\end{proof}

We next study the equivariant Picard group $\Pic_T(Y)$, which is isomorphic to the class group of $T$-equivariant
Cartier divisors.  By normality of $Y$, $\Pic_T(Y)$ is a subgroup of $\Cl_T(Y)$.  For any $r\in\Z^E$, consider the element
$\Phi(r) = \sum r_e [D_e^+]\in \Cl_T(Y)$.  We first consider the case where $\cT$ consists only of $Z$ and its faces,
so that $Y = Y(Z)$.

\begin{lemma}\label{base}
We have $\Pic_T(Y(Z))\cong N^*\cong\Lambda^\perp\subset \Z^E\cong \Cl_T(Y(Z))$.
\end{lemma}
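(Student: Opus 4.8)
The plan is to realize $\Pic_T(Y(Z))$ as a subgroup of $\Cl_T(Y(Z))\cong\Z^E$ and to pin it down using the forgetful map $\Psi\colon\Cl_T(Y(Z))\to\Cl(Y(Z))$. Proposition \ref{generators} already identifies $\Phi(\Lambda^\perp)$ with $\ker\Psi\cong N^*$, so the lemma reduces to proving the two inclusions $\Lambda^\perp\subseteq\Pic_T(Y(Z))$ and $\Pic_T(Y(Z))\subseteq\Lambda^\perp$. Throughout I use that we are in the standing situation of this section, where $Z=Z(\ba)$ is centered at the origin with $\dim Z=\dim N_\R$ and $0$ in its interior.

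For the first inclusion I would argue that $\ker\Psi$ consists precisely of those equivariant classes whose underlying line bundle is trivial: for each character $m\in N^*$, the sheaf $\cO_{Y(Z)}$ equipped with the $T$-linearization twisted by $m$ is a genuine $T$-equivariant line bundle mapping to $0$ under $\Psi$, and (since $Y(Z)$ has only constant invertible functions) these exhaust $\ker\Psi$. As these are honest equivariant line bundles they lie in $\Pic_T(Y(Z))$, giving $\Lambda^\perp=\ker\Psi\subseteq\Pic_T(Y(Z))$. For the reverse inclusion, forgetting the linearization sends $\Pic_T(Y(Z))$ into the ordinary Picard group, and this forgetful map is the restriction of $\Psi$; hence it suffices to prove that $\Pic(Y(Z))=0$, for then $\Pic_T(Y(Z))\subseteq\ker\Psi=\Lambda^\perp$ and the two inclusions combine to give $\Pic_T(Y(Z))\cong N^*$. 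To see that $\Pic(Y(Z))=0$, I would invoke Corollary \ref{pos wts}: since $0$ lies in the interior of $Z$, the variety $Y(Z)$ has positive weights, so $R:=k[Y(Z)]$ is a finitely generated, normal, $\N$-graded domain with $R_0=k$, and geometrically $Y(Z)$ is an affine cone contracted by $\gm$ to the fixed point $0$. For such a cone every line bundle is trivial: because $\gm$ is connected it acts trivially on the discrete group $\Pic(Y(Z))$, so any line bundle is $\gm$-invariant and thus admits a $\gm$-linearization (the obstruction lies in $\Pic(\gm)=0$); a linearized line bundle is a finitely generated graded rank-one projective $R$-module, and a graded Nakayama argument against the irrelevant ideal $R_+$ shows any such module is free.

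The main obstacle is exactly the vanishing $\Pic(Y(Z))=0$. This is subtle precisely because $\Cl(Y(Z))\cong\Z^E$ is typically large and $Y(Z)$ is singular at the cone point, so naive $\bA^1$-homotopy invariance of $\Pic$ is unavailable (indeed it fails for singular cones). The content is that none of the nontrivial Weil classes is locally principal at $0$, and it is the positivity of the grading supplied by Corollary \ref{pos wts}, rather than any local smoothness, that forces this. I expect the graded-freeness step to be the only point requiring care, and it is cleanest to present it as the standard fact that a connected $\N$-graded ring has trivial Picard group, with the $\gm$-linearization argument recorded as the mechanism.
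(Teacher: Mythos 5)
Your proposal is correct, but it takes a genuinely different route from the paper's. The paper transplants the affine toric argument of \cite[4.2.2]{CLS}: a $T$-equivariant Cartier divisor $D$ is locally principal at the cone point $0\in Y(Z)$, the ideal $I(D)$ is generated by monomials, and comparing $D$ near $0$ with the divisor of a single monomial generator forces $D$ to be globally principal; this is explicit and self-contained, using nothing beyond the monomial description of $k[Y(Z)]$. You instead prove the stronger, non-equivariant statement $\Pic(Y(Z))=0$ and then sandwich $\Pic_T(Y(Z))$ between $\ker\Psi$ and $\Lambda^\perp$ using Proposition \ref{generators}; the vanishing rests on Corollary \ref{pos wts} (so $R=k[Y(Z)]$ is normal, finitely generated, $\N$-graded with $R_0=k$), linearizability of line bundles for the $\gm$-action, and graded Nakayama. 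What your route buys is a cleaner conceptual statement (any normal affine cone has trivial Picard group, hence equivariant Cartier classes are exactly the linearizations of $\cO$); what it costs is the import of linearization theory for connected group actions on normal varieties (Mumford, Knop--Kraft--Luna--Vust), which the paper avoids entirely. Two caveats, neither of which is a gap: first, the obstruction to linearizing an invariant line bundle is not literally an element of $\Pic(\gm)$ --- the invariance $g^*L\cong L$ comes from normality of $Y(Z)$ (via $\Cl(\gm\times Y(Z))\cong\Cl(Y(Z))$), and the residual cocycle obstruction lives in the group of central extensions of $\gm$ by $\gm$, which vanishes; it would be cleanest to cite the standard result that $\Pic_{\gm}(X)\to\Pic(X)$ is surjective for $X$ normal with $\cO(X)^*=k^*$. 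Second, your argument genuinely uses that $0$ lies in the interior of $Z$ (the section's standing hypothesis), so when Lemma \ref{base} is later applied to the pieces $Y(\ba,u)$ in the proof of Proposition \ref{support}, one must first recenter the tile via Corollary \ref{twist} and split off the $T^*T''$ factor via Example \ref{sub}; this is harmless since twisting only alters the $\gm$-action, and the paper's own proof requires essentially the same reductions.
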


\begin{proof}
We follow the proof of the analogous statement for affine toric varieties \cite[4.2.2]{CLS},
making appropriate modifications.  By Proposition \ref{generators} and the fact that $[D_e^+] + [D_e^-] = 0$,
every $T$-equivariant Weil divisor is equivalent to an effective divisor of the form
$$D = \sum_{e\in E}r_e^+ D_e^+ + \sum_{e\in E}r_e^- D_e^-$$
for some $r_e^+,r_e^-\in\mathbb{N}^E$.
Assume that $D$ is Cartier.  This means that $D$ is locally principal; in particular, there exists an open neighborhood
$U$ of $0\in Y(Z)$ and a rational function $f$ on $U$ such that $D|_U$ is equal to the divisor associated with $f$.  Since $D$ is effective, we may assume
that $f$ extends to a regular function on $Y(Z)$, which we will also denote by $f$, and we now have $\operatorname{div}(f)\geq D$
with equality after restricting to $U$.  This implies that $f$ is contained in the ideal
$$I(D) := \Big\langle z^{s^+}w^{s^-}\;\Big{|}\; \text{$s^+\geq r^+$, $s^-\geq r^-$, and $s^+-s^-\in N^*$}\Big\rangle\subset k[Y(Z)].$$
Since $\operatorname{div}(f)|_U = D|_U$, the rational function $g/f$ is regular on $U$ for any $g\in I(D)$.
Since $f\in I(D)$, there exists a generator $z^{s^+}w^{s^-}\in I(D)$ such that $z^{s^+}w^{s^-}/f$ does is nonvanishing in some
neighborhood $V$ of $0\in U$, so that $D|_V = \div(f)|_V = \div(z^{s^+}w^{s^-})|_V$.
But $$\div(z^{s^+}w^{s^-}) = \sum_{e\in E}c_e^+ D_e^+ + \sum_{e\in E}c_e^- D_e^-,$$
and if this divisor coincides with $D$ in a neighborhood of 0, then it coincides with $D$ globally.  Hence $D$ is principal,
and we have $[D]\in\ker(\Psi) = \Lambda^\perp$.  Conversely, any element of $\Lambda^\perp \cong N^*$ 
determines a linearization of the trivial bundle on $Y(Z)$,
and therefore lies in $\Pic_T(Y(Z))$.
\end{proof}

Recall from Section \ref{sec:combinatorics} sublattice $P_\cT\subset\Z^E$, defined in Equation \eqref{pct}, which is isomorphic 
via the map $r\mapsto\varphi(\ba,r)$ to the group $\SF(\cT)$ of support functions on $\cT$.

\begin{proposition}\label{support}
For any $r\in\Z^E$, the equivariant Weil divisor $\Phi(r)$ is Cartier if and only if $r\in P_\cT$.
Thus we have an isomorphism $$\Pic_T(Y)\cong P_\cT\cong \SF(\cT).$$
\end{proposition}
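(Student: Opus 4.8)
The plan is to prove the statement by reducing the global Cartier condition to a collection of local conditions, one at each vertex of $\cT$, and then to identify those local conditions with the defining conditions of the lattice $P_\cT$. Recall from Proposition \ref{cext}(ii) that for each vertex $F \in \cT$ the extended core component $C_F$ is $T$-equivariantly isomorphic to the toric variety $X(\Sigma_F)$ associated with the local fan $\Sigma_F$, whose rays correspond to the edges of $\cT$ incident to $F$. Since $Y$ is covered by the open sets $Y(F)$ and Cartierness is a local property, I expect $\Phi(r)$ to be Cartier on $Y$ if and only if it is Cartier in a neighborhood of each of these distinguished points, and the geometry near a vertex is governed by the toric variety $X(\Sigma_F)$.

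First I would establish the local-to-global principle: $\Phi(r)$ is Cartier if and only if its restriction to each $Y(F')$, with $F'$ a maximal tile, is principal near the torus-fixed points, and that by the argument already used in the proof of Lemma \ref{base} it suffices to check Cartierness in a neighborhood of the $T$-fixed point corresponding to each vertex $F$. The essential reduction is that Lemma \ref{base} handles the case $\cT = \{Z \text{ and its faces}\}$, where $Y = Y(Z)$ has a single relevant cone; here the base case gives $\Pic_T(Y(Z)) \cong N^* \cong \Lambda^\perp$. For a general tiling, zooming in near the fixed point indexed by a vertex $F$ replaces $Y$ by a variety whose relevant combinatorics is the local fan $\Sigma_F$, so the condition becomes that $\Phi(r)$ restricts to a $T$-Cartier divisor on each $C_F \cong X(\Sigma_F)$.

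Next I would translate the toric Cartier condition into the language of support functions. By the standard theory of $T$-equivariant Cartier divisors on toric varieties \cite[4.2.2]{CLS}, $\Phi(r)$ is Cartier on $X(\Sigma_F)$ if and only if there is a single linear functional on $N_\R$ agreeing with $r$ (read off via the rays, i.e.\ the incident edges) on the support of $\Sigma_F$; equivalently, the piecewise-linear function $\varphi(\ba,r)$ defined in Section \ref{sec:combinatorics} is linear on each maximal tile incident to $F$. Since $\varphi(\ba,r)$ is by construction already linear on each element of $\cT$, the content is that Cartierness forces $r$ to lie in the sublattice $P_\cT = \Lambda_\cT^\perp$: the relation $\Lambda_\cT = \sum_{u \in \cMp} \Lambda_u$ encodes exactly the linear dependences among the $\ba_e$ within each tile, and requiring $\varphi(\ba,r)$ to be well-defined and locally linear across all tiles is precisely the condition $r \cdot \lambda = 0$ for all $\lambda \in \Lambda_\cT$. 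Conversely, any $r \in P_\cT$ yields a genuine support function $\varphi(\ba,r) \in \SF(\cT)$, and gluing the local linearizations produces a $T$-equivariant line bundle, so $\Phi(r)$ is Cartier.

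The main obstacle I anticipate is making the local-to-global reduction fully rigorous: one must verify that checking Cartierness at the vertex fixed points genuinely suffices, i.e.\ that no Cartier obstruction is hidden along positive-dimensional tiles, and that the restriction maps $\Cl_T(Y) \to \Cl_T(C_F)$ interact correctly with the Cartier subgroups. This amounts to showing that the collection of local linear functionals coming from the $\Sigma_F$ patches together compatibly across shared edges, which is where the precise structure of $\Lambda_\cT$ as a sum over positive covectors does the work. Once the reduction is in place, the identification $\Pic_T(Y) \cong P_\cT \cong \SF(\cT)$ follows by combining the local toric computation with the isomorphism $r \mapsto \varphi(\ba,r)$ recalled from Section \ref{sec:combinatorics}.
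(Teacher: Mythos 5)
There is a genuine gap in your local-to-global reduction. You begin correctly by proposing to restrict to the charts $Y(F')$ for $F'$ a maximal tile, but you then assert that the $T$-fixed points correspond to vertices of $\cT$ and that it suffices to check that $\Phi(r)$ restricts to a Cartier class on each extended-core component $C_F\cong X(\Sigma_F)$ with $F$ a vertex. This confuses two index sets: by Proposition \ref{cext} the vertices of $\cT$ index the (positive-dimensional) irreducible components of the extended core, while the fixed points --- the points where local principality can actually fail --- are indexed by the \emph{maximal} tiles. More seriously, restriction to the closed subvarieties $C_F$ only gives one implication: a Cartier class restricts to a Cartier class, but the converse is false. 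Concretely, take $N=\Z$, $E=\{1,2\}$, $\ba=(1,1)$, and the trivial tiling of $Z=[-2,2]$, so $Y=Y(Z)$ is the $A_1$ Kleinian singularity of Example \ref{kleinian} and $P_\cT=\Lambda^\perp=\Z\cdot(1,1)$. The divisor $\Phi((1,0))=[D_1^+]$ is the non-principal divisor $V(a,b)$ and is not Cartier at the cone point; yet the only vertices are $\{\pm 2\}$, the components $C_{\{\pm2\}}$ are isomorphic to $\bA^1$, and every divisor on $\bA^1$ is Cartier, so your test passes a class it should reject. The underlying reason is that the local fans $\Sigma_F$ see only the polyhedral complex $\cT$ and the edges incident to $F$, whereas $P_\cT$ depends on the multiplicities and linear dependences among the $\ba_e$ inside each tile, i.e.\ on $\Lambda_u$ for the covectors $u$ of the \emph{maximal} tiles; this is exactly the obstruction you worried might be ``hidden along positive-dimensional tiles,'' and it is hidden precisely where your test does not look.

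The repair is to localize to the \emph{open} affine charts rather than the closed core components. The $Y(\ba,u)$ for $u\in\cMp$ cover $Y$, each is isomorphic to $Y(Z(\ba,u))$ and has a unique closed $T\times\gm$-orbit, so Cartierness on $Y$ is equivalent to Cartierness on each chart; Lemma \ref{base} then says that $\Phi(r)$ is Cartier on $Y(\ba,u)$ if and only if the projection of $r$ to $\Z^{E_u}$ lies in $\Lambda_u^\perp$, and intersecting these conditions over all $u\in\cMp$ gives exactly $r\in P_\cT$. This is the route the paper takes. Your final combinatorial translation (linearity of $\varphi(\ba,r)$ on every tile being equivalent to $r\cdot\lambda=0$ for all $\lambda\in\Lambda_\cT$) is correct, so only the geometric mechanism feeding into it needs to be replaced.
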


\begin{proof}
The property of being Cartier is local, so it is sufficient to consider the restriction of $\Phi(r)$ to $Y(\ba,u)\subset Y(\cT) = Y$
for each positive covector $u\in\cMp$.  We have
$$Y(\ba,u) \cong Y \smallsetminus \left(\bigcup_{u_e=+}D_e^+\cup\bigcup_{u_e=-}D_e^-\right),$$
hence we have a commutative diagram
\[\tikz[->,thick]{
\matrix[row sep=10mm,column sep=20mm,ampersand replacement=\&]{
\node (a) {$\Z^E$}; \& \node (b) {$\Z^{E_u}$}; \\
\node (c) {$\Cl_T(Y)$}; \& \node (d) {$\Cl_T(Y(\ba,u))$}; \\
};
\draw (a) -- (b) ; 
\draw (a) -- (c) node[ left,midway]{$\Phi_\cT$};
\draw (c) -- (d) ; 
\draw (b) -- (d) node[ right,midway]{$\Phi_Z$}; 
}\]
in which the two arrows is the standard projection.
By Lemma \ref{base}, the restriction of $\Phi(r)$ to $Y(\ba,u)$ is Cartier if and only if the projection of $r$ onto $\Z^{E_u}$
lies in $N^*\cong \Lambda_u^\perp$.  This holds for every positive covector $u\in\cMp$ if and only if $r\in P_\cT$.
\end{proof}


Let $X = X(\ba, \cM)$ be the Lawrence toric variety defined in Remark \ref{Lawrence fan}.
Recall that $X$ is acted on by the torus $\tilde T = \spec k[\tilde N^*]$, and the inclusion of $N$
into $\tilde N$ that sends $\ba_e$ to $\rho_e^+-\rho_e^-$ induces an inclusion of $T$ into $\tilde T$.
We have a commutative diagram
\[\tikz[->,thick]{
\matrix[row sep=10mm,column sep=20mm,ampersand replacement=\&]{
\node (a) {$\Z^E\oplus\Z^E$}; \& \node (b) {$\Cl_{\tilde T}(X)$}; \\
\node (c) {$\Z^E$}; \& \node (d) {$\Cl_T(Y)$}; \\
};
\draw (a) -- (b)node[ above,midway]{$\tilde\Phi$} ; 
\draw (a) -- (c) ;
\draw (c) -- (d) node[ above,midway]{$\Phi$}; 
\draw (b) -- (d) ; 
}\]
in which the horizontal arrows are isomorphisms, the left vertical arrow sends
$r^\pm$ to $r^+-r^-$, and the right vertical arrow is given by taking a $\tilde T$-invariant divisor
on $X$, regarding it as a $T$-invariant divisor, and intersecting with $Y$.  
Lemma \ref{same pic} tells us
that an element of $\Cl_{\tilde T}(X)$ is Cartier if and only if its image in $\Cl_T(Y)$ is Cartier;
equivalently, it says that the restriction map $\Pic(X)\to\Pic(Y)$ on non-equivariant Picard groups
is an isomorphism.

\begin{lemma}\label{same pic}
For any $r^\pm\in\Z^E\oplus\Z^E$, $\tilde\Phi(r^\pm)$ is Cartier if and only if $\Phi(r^+-r^-)$ is Cartier.
\end{lemma}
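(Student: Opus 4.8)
The plan is to verify the equivalence Zariski-locally, on the natural affine covers of $X$ and $Y$ indexed by the maximal tiles of $\cT$, and to show that on each chart both notions of Cartier collapse to one and the same condition on the projection of $r^+-r^-$.

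First, since being Cartier is a local condition, I would reduce to the charts. The Lawrence toric variety $X=X(\ba,\cM)$ is covered by the affine opens $X(\ba,v)=U_{\sigma_v}$ as $v$ ranges over the maximal elements of $\cMp$ (Remark \ref{Lawrence fan}), and by construction $Y(\ba,v)=X(\ba,v)\cap\{\mu_\ba=0\}$, these charts covering $Y$ by Proposition \ref{limit}. The isomorphisms $\tilde\Phi$ and $\Phi$ and the two vertical maps are all compatible with restriction to a chart---restriction merely kills the classes of the toric divisors whose rays do not lie in $\sigma_v$---so it suffices to prove, for each maximal $v$, that $\tilde\Phi(r^\pm)|_{X(\ba,v)}$ is Cartier if and only if $\Phi(r^+-r^-)|_{Y(\ba,v)}$ is Cartier.

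On the toric chart, $\tilde\Phi(r^\pm)|_{X(\ba,v)}$ is Cartier exactly when its divisor is cut out by a character, i.e.\ when some $(s,t)\in\tilde N^*$ pairs with the ray generators $\{-\rho_e^\epsilon\mid\epsilon\neq v_e\}$ of $\sigma_v$ so as to match the coefficients $r^\pm$. Writing $E_v=\{e\mid v_e=0\}$ and using that the antidiagonal embedding of $\Lambda$ gives $\tilde N^*=\{(s,t)\in\Z^E\oplus\Z^E\mid s-t\in\Lambda^\perp=N^*\}$, these constraints force $s_e-t_e=r_e^+-r_e^-$ for $e\in E_v$ and leave $s_e-t_e$ unconstrained for $e\notin E_v$; setting $\ell=s-t$, this is exactly the requirement that $(r^+-r^-)|_{E_v}$ lie in the image $\pi_{E_v}(\Lambda^\perp)$ of $N^*=\Lambda^\perp$ under the coordinate projection $\pi_{E_v}\colon\Z^E\to\Z^{E_v}$. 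On the hypertoric chart $Y(\ba,v)\cong Y(Z(\ba,v))$, Lemma \ref{base} (applied as in the proof of Proposition \ref{support}) says that $\Phi(r^+-r^-)|_{Y(\ba,v)}$ is Cartier if and only if the class of $(r^+-r^-)|_{E_v}$ in $\Cl_T(Y(\ba,v))\cong\Z^{E_v}$ lies in the image of $\Pic_T\cong N^*=\Lambda^\perp$, that is, again in $\pi_{E_v}(\Lambda^\perp)$. The two charts thus impose the same condition, which proves the equivalence chart by chart and hence globally.

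The heart of the matter, and the step I expect to require the most care, is the identification of both local conditions with the single projection condition $(r^+-r^-)|_{E_v}\in\pi_{E_v}(\Lambda^\perp)$. In particular one must use the image $\pi_{E_v}(\Lambda^\perp)$ of the equivariant Picard lattice and not $(\Lambda\cap\Z^{E_v})^\perp$: these agree when $Z(\ba,v)$ is a cube but differ precisely when $Z(\ba,v)$ is a parallelotope that is not a cube, and it is the former that controls Cartierness on both sides. The matching is made transparent by the description $\tilde N^*=\{(s,t)\mid s-t\in\Lambda^\perp\}$, under which the surjection $(s,t)\mapsto s-t$ dual to $T\hookrightarrow\tilde T$ collapses the toric character-matching on $\sigma_v$ to the single lattice vector $\ell=s-t\in\Lambda^\perp$.
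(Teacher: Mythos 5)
Your argument is correct, and its skeleton agrees with the paper's: both proofs localize (you to the affine charts indexed by maximal tiles, the paper to the maximal cones of the Lawrence fan via the support-function criterion) and then match the two local Cartier conditions. The genuine difference is in what each argument identifies the common local condition to be. The paper routes both sides through the support-function lattice $P_\cT=\Lambda_\cT^\perp$: it quotes Proposition \ref{support} for $Y$ and the toric support-function criterion for $X$, and on each chart this amounts to the condition $\pi_{E_v}(r^+-r^-)\in\Lambda_v^\perp$. You instead compute the local condition on both sides to be $\pi_{E_v}(r^+-r^-)\in\pi_{E_v}(\Lambda^\perp)$, and your computation is the sharper one: on the toric chart the character must lie in the lattice $\tilde N^*=\{(s,t)\mid s-t\in\Lambda^\perp\}$, not merely satisfy the real-linear relations among the rays of $\sigma_v$, and on the hypertoric chart the proof of Lemma \ref{base} actually produces $\Pic_T$ as the image of $N^*$ in $\Z^{E_v}$, not as $\Lambda_v^\perp$. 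These two sublattices of $\Lambda_v^\perp$ coincide, so your chart-by-chart matching goes through cleanly. Moreover the distinction you flag is real and not pedantic: for instance with $N=\Z^2$ and $\ba=((1,0),(1,2),(0,1))$, the chart attached to the tile $Z(\ba,(0,0,+))$ is $\bA^4/\{\pm 1\}$ times a twist, on which the divisor $\Phi((1,0,0))$ generates the $\Z/2$ in the local class group and is not Cartier, even though $(1,0,0)\in\Lambda_v^\perp=\Z^2$ for that chart. So your refinement actually repairs a torsion gap in the paper's intermediate statement (Proposition \ref{support} as literally stated), while the equivalence asserted by the lemma itself is unaffected, since the paper makes the same identification on both sides. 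Your approach has the additional merit of not depending on Proposition \ref{support} at all.

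One small inaccuracy in your closing remark: $\pi_{E_v}(\Lambda^\perp)$ and $(\Lambda\cap\Z^{E_v})^\perp$ do not differ \emph{precisely} for non-cube parallelotopes; they differ exactly when the sublattice of $N$ generated by $\{\ba_e\mid e\in E_v\}$ fails to be saturated, which can also happen for tiles that are not parallelotopes (e.g.\ $\bar\ba=((1,0),(1,2),(3,2))$ spans the index-two sublattice $\Z\oplus 2\Z$ of $\Z^2$ but $Z(\bar\ba)$ is a hexagon). This parenthetical is not load-bearing for your argument, which only needs that both local conditions coincide with $\pi_{E_v}(\Lambda^\perp)$.
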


\begin{proof}
We know from Proposition \ref{support} that $\Phi(r^+-r^-)$ is Cartier if and only if $r^+-r^-\in P_\cT$;
that is, if and only if it can be used to define a support function on $\cT$.  Similarly, $\tilde\Phi(r^\pm)$
is Cartier if and only if it can be used to define a support function on the Lawrence fan $\Sigma$ \cite[4.2.12]{CLS}.
More precisely, for each positive covector $u\in\cMp$, we have the cone $\sigma_u\in\Sigma$
generated by $\{-\rho_e^\epsilon\mid \epsilon\neq u_e\}$ (Remark \ref{Lawrence fan}), and we would like to define
a linear function $\tilde\varphi(\ba,r^\pm)$ on $\sigma_u$ by sending $\rho_e^+$ to $b_e^+$ if $u_e\neq +$
and $\rho_e^-$ to $b_e^-$ if $u_e\neq -$.  This is well-defined if and only if $r^+-r^-\in P_\cT$.
\end{proof}

We next use Lemma \ref{same pic} to identify the nef and ample cones in $\Pic_T(Y)$.
For any $r^\pm\in\Z^E\oplus\Z^E$ such that $r^+-r^-\in P_\cT$, 
let $\tilde\varphi(\ba,r^\pm)$ be the support function on $\Sigma$ defined
in the proof of Lemma \ref{same pic}.  Similarly, for any $r\in P_\cT\subset \Z^E$, let $\varphi(\ba,r)$ be the support function
on $\cT$ defined in Section \ref{sec:combinatorics}.

\begin{proposition}\label{ample}
For any $r^\pm\in\Z^E\oplus\Z^E$ such that $r^+-r^-\in P_\cT$, the following are equivalent:
\begin{itemize}
\item[i.] The support function $\tilde \varphi(\ba,r^\pm)$ on $\Sigma$ is convex (respectively strictly convex)
\item[ii.] The line bundle $\tilde\Phi(r^\pm)$ on $X$ is nef (respectively ample)
\item[iii.] The support function $\varphi(\ba,r^+-r^-)$ on $\cT$ is convex (respectively strictly convex)
\item[iv.] The line bundle $\Phi(r^+-r^-)$ on $Y$ is nef (respectively ample).
\end{itemize}
\end{proposition}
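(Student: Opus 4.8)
The plan is to prove the four conditions equivalent via the cycle of implications (iv) $\Rightarrow$ (iii) $\Rightarrow$ (i) $\Rightarrow$ (ii) $\Rightarrow$ (iv), anchoring the argument on the standard toric dictionary for the step (i) $\Rightarrow$ (ii), and on the closed embedding $Y\subset X$ of Remark \ref{Lawrence fan} for the two restriction steps. Recall from the diagram preceding Lemma \ref{same pic} that $\Phi(r^+-r^-)$ is precisely the restriction of $\tilde\Phi(r^\pm)$ to $Y$, so the geometry of $Y$ sits inside that of $X$.

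The step (i) $\Rightarrow$ (ii) is the standard correspondence between support functions and equivariant line bundles on a toric variety proper over its affinization: $\tilde\Phi(r^\pm)$ is nef if and only if $\tilde\varphi(\ba,r^\pm)$ is convex, and ample if and only if it is strictly convex \cite{CLS}. The step (ii) $\Rightarrow$ (iv) is then immediate, since nefness and ampleness of a line bundle are inherited by restriction to the closed subvariety $Y\subset X$.

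For (iv) $\Rightarrow$ (iii) I would restrict not to all of $X$ but to the complete core components of $Y$. For each interior codimension-one tile $W\in\cT$, Proposition \ref{cext} identifies $C_W$ with the complete toric variety $X(\Sigma_W)$, and the restriction of $\Phi(r^+-r^-)$ to $C_W$ corresponds to the restriction of $\varphi(\ba,r^+-r^-)$ to the local fan $\Sigma_W$. Applying the toric dictionary on each $C_W$, nefness (respectively ampleness) of $\Phi(r^+-r^-)$ forces $\varphi(\ba,r^+-r^-)$ to be convex (respectively strictly convex) across every interior wall of $\cT$. Since a piecewise-linear support function is (strictly) convex if and only if it is so across each wall, i.e.\ on each local fan $\Sigma_W$ — the converse to the implication recorded in Remark \ref{local fan} — this yields (iii).

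The crux is the remaining combinatorial step (iii) $\Rightarrow$ (i), where one must promote convexity of $\varphi(\ba,r^+-r^-)$ on the tiling $\cT\subset N_\R$ to convexity of $\tilde\varphi(\ba,r^\pm)$ on the Lawrence fan $\Sigma\subset\tilde N_\R$. Both conditions reduce to a single inequality across each interior wall, and adjacent maximal cones $\sigma_v,\sigma_{v'}$ of $\Sigma$ correspond to adjacent maximal tiles $F,F'$ of $\cT$; the identity $\sigma_{v,v'}\cap N_\R=\sigma_{F,F'}$ from the proof of Proposition \ref{cext}(ii) is what links the two walls. The obstacle is that $\Sigma$ lives in the larger space $\tilde N_\R$, so a priori its convexity is a stronger demand than convexity of the projected data on $N_\R$. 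I would resolve this using the explicit Lawrence generators $\{-\rho_e^\epsilon\mid \epsilon\neq v_e\}$ to check that crossing the wall between $\sigma_v$ and $\sigma_{v'}$ only engages the single index in which $F$ and $F'$ differ, whose contribution projects to the $\ba$-direction governing the corresponding wall of $\cT$; tracking this correspondence — together with the equality of maximal tiles with maximal cones from Remark \ref{Lawrence fan} in the strict case — reduces the $\tilde\varphi$-inequality to the $\varphi$-inequality and closes the cycle.
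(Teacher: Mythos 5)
Your proposal assembles the same ingredients as the paper's proof --- the standard toric dictionary on $X$ for (i)$\Leftrightarrow$(ii), restriction to the closed subvariety $Y\subset X$ for (ii)$\Rightarrow$(iv), the fact that the complete torus-invariant curves of $X$ all lie in $Y$ (they are the core curves $C_F$ for $F\in\cT$ of codimension $1$ not contained in $\partial Z$), and a wall-by-wall comparison of the two support functions --- merely rearranged into a cycle. The paper instead proves (iv)$\Rightarrow$(ii) directly by testing $\tilde\Phi(r^\pm)$ on those curves, and separately establishes (i)$\Leftrightarrow$(iii) by an explicit computation; your route through (iv)$\Rightarrow$(iii)$\Rightarrow$(i) is logically equivalent and buys nothing extra, since your (iv)$\Rightarrow$(iii) step silently needs the assertion that $\deg\bigl(\Phi(r^+-r^-)|_{C_W}\bigr)$ equals the bending of $\varphi(\ba,r^+-r^-)$ across the wall $W$, and that degree is computed in $X$ as the bending of $\tilde\varphi(\ba,r^\pm)$ across the corresponding wall of $\Sigma$ --- i.e.\ it is the same identity you need for (iii)$\Rightarrow$(i). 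The computation cannot be relocated out of existence.

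On that crux step, one detail of your sketch is wrong as stated: crossing the wall between $\sigma_v$ and $\sigma_{v'}$ does \emph{not} ``only engage the single index in which $F$ and $F'$ differ.'' Adjacent maximal tiles $Z(\ba,v)$ and $Z(\ba,v')$ generally have sign vectors differing in several coordinates, and the two representations $\eta=\sum t_e\ba_e=\sum t'_e\ba_e$ of a test point differ in several coordinates as well (this already happens for the hexagon of Example \ref{ptwo}). The reduction still goes through, but it requires genuine multi-index bookkeeping: one constructs lifts $s^\pm,(s^\pm)'$ of the wall-crossing data for $\Sigma$ from $t,t'$ coordinate by coordinate, splitting into the cases $v'_e=0,+,-$, checks that these satisfy the cone-membership constraints and define a point $\delta\in\sigma_v\smallsetminus\sigma_{v'}$, and verifies the identity $r^\pm\cdot\bigl((s^\pm)'-s^\pm\bigr)=(r^+-r^-)\cdot(t'-t)$. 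That is exactly the content of the paper's proof of (i)$\Leftrightarrow$(iii); with it supplied, your argument is complete.
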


\begin{corollary}\label{bundles}
Under the isomorphism $\Pic_T(Y)\cong P_\cT\cong \SF(\cT)$ of Proposition \ref{support},
\begin{itemize}
\item (non-equivariantly) trivial line bundles correspond to linear functions
\item nef line bundles correspond to convex functions, and 
\item ample line bundles correspond to strictly convex functions.
\end{itemize}
\end{corollary}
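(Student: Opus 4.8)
The plan is to obtain all three correspondences directly from results already established: the nef and ample statements from Proposition \ref{ample}, and the trivial/linear statement from Proposition \ref{generators}. Recall that under the isomorphism $\Pic_T(Y)\cong P_\cT\cong\SF(\cT)$ of Proposition \ref{support} a single element $r\in P_\cT$ corresponds simultaneously to the line bundle $\Phi(r)$ and to the support function $\varphi(\ba,r)$, so the entire task is to match properties of $\Phi(r)$ with the corresponding properties of $\varphi(\ba,r)$.

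For the nef and ample cases I would proceed as follows. Given $r\in P_\cT$, choose any $r^\pm\in\Z^E\oplus\Z^E$ with $r^+-r^-=r$ (for instance $r^+=r$, $r^-=0$). Then the bundle attached to $r$ is $\Phi(r)=\Phi(r^+-r^-)$ and its support function is $\varphi(\ba,r)=\varphi(\ba,r^+-r^-)$, and the equivalence of statements (iii) and (iv) in Proposition \ref{ample} asserts precisely that $\Phi(r^+-r^-)$ is nef (respectively ample) if and only if $\varphi(\ba,r^+-r^-)$ is convex (respectively strictly convex). This immediately gives the correspondences ``nef $\leftrightarrow$ convex'' and ``ample $\leftrightarrow$ strictly convex'', and the outcome is independent of the splitting since (iii) and (iv) depend only on the difference $r^+-r^-$.

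For the trivial/linear case I would note that the $T$-equivariant line bundle $\Phi(r)$ is non-equivariantly trivial exactly when it lies in $\ker(\Psi)$. By Proposition \ref{generators} we have $\ker(\Psi)=\Phi(\Lambda^\perp)$; since $\Phi$ is injective and the containment $\Lambda_\cT\subseteq\Lambda$ yields $\Lambda^\perp\subseteq\Lambda_\cT^\perp=P_\cT$, this shows $\Phi(r)$ is trivial if and only if $r\in\Lambda^\perp$. To finish I would identify $\Lambda^\perp$ with the linear support functions: under the isomorphism $N^*\cong\Lambda^\perp$ of Proposition \ref{generators}, which sends $m$ to $(m(\ba_e))_{e\in E}$, an element $r\in\Lambda^\perp$ has the form $r_e=m(\ba_e)$, so for any $\eta=\sum_e t_e\ba_e\in Z$ the defining formula for the support function gives $\varphi(\ba,r)(\eta)=\sum_e r_e t_e=m\big(\sum_e t_e\ba_e\big)=m(\eta)$. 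Thus $\varphi(\ba,r)=m$ is globally linear, and conversely every linear support function arises in this way, so $r\in\Lambda^\perp$ if and only if $\varphi(\ba,r)$ is linear.

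In this argument the genuine content is fully carried by Proposition \ref{ample}, so the corollary is essentially a repackaging of that result together with the description of $\ker(\Psi)$. The only points requiring care are the elementary verifications that $\Lambda^\perp\subseteq P_\cT$ and that $\Lambda^\perp$ is exactly the set of linear support functions; both are immediate from the definitions, so I do not expect any real obstacle at this stage.
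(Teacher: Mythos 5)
Your proposal is correct and follows exactly the route the paper intends: the paper gives no separate argument for Corollary \ref{bundles}, treating the nef and ample cases as immediate from the equivalence of (iii) and (iv) in Proposition \ref{ample} and the trivial/linear case as immediate from the identification $\ker(\Psi)=\Phi(\Lambda^\perp)$ in Proposition \ref{generators}. Your added verifications that $\Lambda^\perp\subseteq P_\cT$ and that $\Lambda^\perp$ corresponds precisely to the globally linear support functions are exactly the details the paper leaves implicit.
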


\begin{proofample}
The equivalence of (i) and (ii) is a standard fact about toric varieties; see \cite[6.1.7 \& 6.3.12]{CLS}
for numerical effectiveness and \cite[7.2.6]{CLS} for ampleness.  The fact that (ii) implies (iv) is automatic,
since the properties of being nef or ample are preserved when restricting to a closed subvariety.
To see that (iv) implies (ii), we note that a line bundle on a toric variety is nef (respectively ample)
if and only if the degree of its restriction to any complete, irreducible, torus-invariant curve is non-negative (respectively positive)
\cite[6.3.12 \& 15.5.1]{CLS}.  But all such curves are contained in $Y$; they are exactly the curves $C_F$ where $F\in\cT$
has codimension 1 and is not contained in the boundary of $Z$.  If $\Phi(r^+-r^-)$ is nef (respectively ample),
then its restriction to any of these curves has non-negative (respectively positive) degree, which implies that 
$\tilde\Phi(r^\pm)$ is nef (respectively ample).  

It remains only to prove the equivalence of (i) and (iii).  We first give an explicit
characterization of convexity of the function $\varphi(\ba,r)$ for any $r\in P_\cT$.  Let $v,v'\in\cMp$ be minimal positive covectors
such that $Z(\ba,v)$ and $Z(\ba,v')$ intersect in a face of codimension 1, and let $\eta$ be any point
in $Z(\ba,v)$ that does not lie in $Z(\ba,v')$.  Then we can write
$$\sum_{e\in E} t_e\ba_e \;\;=\;\; \eta \;\;=\;\; \sum_{e\in E}t_e'\ba_e$$
with $$t_e\in
\begin{cases}
[-1,1] \;\;\;\text{if $v_e = 0$}\\
\{1\} \;\;\;\;\;\;\;\,\text{if $v_e = +$}\\
\{-1\} \;\;\;\;\;\text{if $v_e = -$}
\end{cases}
\and\;\;
t_e' \in 
\begin{cases}
\R \;\;\;\;\;\;\;\;\;\;\text{if $v'_e = 0$}\\
\{1\} \;\;\;\;\;\;\;\,\text{if $v'_e = +$}\\
\{-1\} \;\;\;\;\;\text{if $v'_e = -$}
\end{cases}.$$
The choices of $t$ and $t'$ are not unique (unless $Z(\ba,v)$ and $Z(\ba,v')$ are parallelotopes),
but for any $r\in P_\cT$, the quantities $r\cdot t$ and $r\cdot t'$ do not depend on the choices.
(For example, $r\cdot t$ is equal to $\varphi(\ba,r)(\eta)$.)
The restriction of $\varphi(\ba,r)$ to $Z(\ba,v)\cup Z(\ba,v')$ is convex (respectively strictly convex)
if and only if $r\cdot (t'-t)$ is non-negative (respectively positive).  The function $\varphi(\ba,r)$ is convex
on all of $Z$ if and only if this condition is satisfied for all such pairs $v$ and $v'$.

We now give a similar criterion for convexity of $\tilde\varphi(\ba,r^\pm)$ for any $r^\pm\in\Z^E\oplus\Z^E$
such that $r^+-r^-\in P_\cT$.  
Let $v,v'\in\cMp$ be minimal positive covectors
such that $\sigma_v$ and $\sigma_{v'}$ intersect in a face of codimension 1; this is equivalent to the condition
that $Z(\ba,v)$ and $Z(\ba,v')$ intersect in a face of codimension 1.  Let $\delta$ be any point in $\sigma_v$
that does not lie in $\sigma_{v'}$.
Then we can write
\begin{equation}\label{delta}\sum_{e\in E} s^+_e\rho_e^+ + \sum_{e\in E} s^-_e\rho_e^- \;\;=\;\; \delta \;\;=\;\; 
\sum_{e\in E} (s^+_e)'\rho_e^+ + \sum_{e\in E} (s^-_e)'\rho_e^-\end{equation}
with \begin{equation}\label{s}s_e^\epsilon\in
\begin{cases}
\R_{\leq 0} \;\;\;\;\;\;\,\text{if $v_e \neq \epsilon$}\\
\{0\} \;\;\;\;\;\;\;\,\text{if $v_e = \epsilon$}
\end{cases}
\and\;\;
(s_e^\epsilon)' \in 
\begin{cases}
\R \;\;\;\;\;\;\;\;\;\;\text{if $v'_e \neq \epsilon$}\\
\{0\} \;\;\;\;\;\;\;\,\text{if $v'_e = \epsilon$}
\end{cases}.\end{equation}
Once again, the choices of $s^\pm$ and $(s^\pm)'$ are not unique, but if $r^+-r^-\in P_\cT$,
then $r^\pm\cdot s^\pm$ and $r^\pm\cdot (s^\pm)'$ do not depend on these choices.
The restriction of $\tilde\varphi(\ba,r^\pm)$ to $\sigma_v\cup \sigma_{v'}$ is convex (respectively strictly convex)
if and only if $r^\pm\cdot ((s^\pm)'-s^\pm)$ is non-negative (respectively positive), and the function $\tilde\varphi(\ba,r^\pm)$ is convex on all of $|\Sigma|$ if and only if this condition is satisfied for all such pairs $v$ and $v'$.

Suppose that $v$ and $v'$ are given and that we have already chosen $\eta$, $t$, and $t'$ as above.
We will now explain how to choose $\delta$, $s^\pm$, and $(s^\pm)'$.  We start by choosing $s^\pm$ and $(s^\pm)'$ as follows:
\begin{itemize}
\item If $v_e' = 0$, put $s_e^+ = s_e^- = 0$ and $(s_e^+)' = -(s_e^-)' = t_e' - t_e\in\R$.
\item If $v_e' = +$, put $(s_e^+)' = s_e^- = 0$ and $s_e^+ = (s_e^-)' = t_e - t_e' = t_e - 1\in [-2,0]\subset\R_{\leq 0}$.
\item If $v_e' = -$, put $(s_e^-)' = s_e^+ = 0$ and $s_e^- = (s_e^+)' = t_e' - t_e = -1 - t_e\in [-2,0]\subset\R_{\leq 0}$.
\end{itemize}
We then observe that the conditions of Equation \eqref{s} are satisfied, and that 
$$\sum_{e\in E} s^+_e\rho_e^+ + \sum_{e\in E} s^-_e\rho_e^- \;\;=\;\;
\sum_{e\in E} (s^+_e)'\rho_e^+ + \sum_{e\in E} (s^-_e)'\rho_e^-,$$
so we may use Equation \ref{delta} to define $\delta$.  It is clear from the first equality in Equation \ref{delta}
that $\delta\in\sigma_v$.  One can also check that the condition
$\eta\notin Z(\ba,v')$ implies that $\delta\notin\sigma_{v'}$.
Thus
$$
\text{$\tilde\varphi(\ba,r^\pm)$ is convex on $\sigma_v\cup\sigma_{v'}$\;\;\;\; $\Leftrightarrow$\;\;\;\;
$r^\pm\cdot ((s^\pm)'-s^\pm)$ is non-negative}
$$
and
$$
\text{$\varphi(\ba,r^+-r^-)$ is convex on $Z(\ba,v)\cup Z(\ba,v')$\;\;\;\; $\Leftrightarrow$\;\;\;\;
$(r^+-r^-)\cdot (t'-t)$ is non-negative}.
$$
But $r^\pm\cdot ((s^\pm)'-s^\pm) = (r^+-r^-)\cdot (t'-t)$, so the two conditions are identical.
The same statements hold with strict convexity and positivity, thus the equivalence of (i) and (iii) is proved.
\end{proofample}
 
\begin{example}
Consider the hypertoric variety $Y(\ba,\cM)\cong T^*\mathbb{P}^1$ from Example \ref{tpo}.
A support function $\varphi$ on $\cT$ is convex if and only if $\varphi(0)\geq 0$, and it is strictly
convex if and only if $\varphi(0)>0$.  Since $\{0\} = Z(\ba,(+,+))$, we have
$\varphi(\ba,r)(0) = r_1 + r_2$ for any $r = (r_1,r_2)\in \Z^2$.
Thus $\Phi(r)$ is nef if and only if $r_1+r_2\geq 0$, and ample if and only if $r_1+r_2>0$.
Furthermore, we have $r_1+r_2=0$ if and only if $r\in\Lambda^\perp$ if and only if $\Phi(r)$ is
a linearization of the trivial bundle.
\end{example}

As a corollary to Proposition \ref{ample}, we may deduce that $Y(\cT)$ is projective over $Y(Z)$ if and only if $\cT$
admits a strictly convex support function; that is, if and only if $\cT$ is regular.

\begin{corollary}\label{regproj}
The variety $Y(\cT)$ is projective over $Y(Z)$ if and only if $\cT$ is regular.
\end{corollary}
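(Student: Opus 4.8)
The plan is to reduce the statement to the combinatorial characterization of regularity, using the classification of equivariant ample line bundles carried out in Corollary \ref{bundles}. Recall from the introduction that $Y := Y(\cT)$ is projective over $Y(Z) \cong Y(\cT)_0$ precisely when it admits an ample line bundle (relative to the affinization map), and recall from Section \ref{sec:combinatorics} that $\cT$ is regular precisely when it admits a strictly convex support function. So it suffices to prove that $Y$ admits an ample line bundle if and only if $\cT$ admits a strictly convex support function.

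For the forward implication (regular $\Rightarrow$ projective), I would suppose that $\cT$ carries a strictly convex support function $\varphi \in \SF(\cT)$. Under the isomorphism $\Pic_T(Y) \cong \SF(\cT)$ of Proposition \ref{support}, Corollary \ref{bundles} shows that the corresponding $T$-equivariant line bundle is ample. Forgetting the $T$-linearization produces an ample line bundle on $Y$, so $Y$ is projective over $Y(Z)$. This direction needs nothing beyond Corollary \ref{bundles}.

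For the reverse implication, I would suppose that $Y$ is projective over $Y(Z)$, so it carries some ample line bundle $L$. The difficulty is that $L$ need not a priori be $T$-equivariant, whereas Corollary \ref{bundles} classifies ample bundles inside the \emph{equivariant} Picard group. I would resolve this using that $Y$ is normal (Proposition \ref{symp}) and that $T$ is a torus: a suitable power of $L$ then admits a $T$-linearization, promoting it to a class in $\Pic_T(Y)$. Since relative ampleness of a $T$-equivariant line bundle is equivalent to relative ampleness of the underlying bundle, and since powers of an ample bundle are ample, this equivariant class is again ample; Corollary \ref{bundles} then identifies it with a strictly convex support function on $\cT$, whence $\cT$ is regular.

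The main obstacle is exactly this equivariant-versus-non-equivariant comparison in the reverse direction, since Corollary \ref{bundles} lives entirely in $\Pic_T(Y)$. It rests on the linearizability of line bundles on normal $T$-varieties together with the fact that ampleness is a property of the underlying bundle, independent of the choice of linearization; the kernel $N^*$ of the forgetful map $\Psi : \Cl_T(Y) \to \Cl(Y)$ computed in Proposition \ref{generators} records precisely the ambiguity in this choice and is invisible to ampleness. I do not expect any genuine difficulty beyond citing these standard facts.
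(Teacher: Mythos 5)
Your proposal is correct and follows essentially the same route as the paper, which derives this corollary directly from Proposition \ref{ample} and Corollary \ref{bundles} via the equivalence ``projective over $Y(Z)$ $\Leftrightarrow$ admits a relatively ample line bundle.'' The linearizability issue you flag in the reverse direction is real but is handled even more directly by the paper's own Proposition \ref{generators}: since $\Psi:\Cl_T(Y)\to\Cl(Y)$ is surjective and being Cartier is a non-equivariant condition, any ample line bundle already lifts to a class in $\Pic_T(Y)$ without passing to a power.
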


\begin{example}\label{bad component}
Given any complete rational cone $\Sigma$ in $N_\R$, one can find a tiling $\cT$
and a vertex $F\in\cT$ such that $\Sigma_F = \Sigma$, which implies by Proposition \ref{cext}(ii)
that the toric variety $X(\Sigma)$ is a component of the core of $Y(\cT)$.  As we explained in Remark \ref{local fan},
if $\Sigma$ does not admit
a strictly convex support function (that is, if $X(\Sigma)$ is not projective over $X(\Sigma)_0$), then neither does $\cT$
(that is, $Y(\cT)$ is not projective over $Y(\cT)_0=Y(Z)$).
\end{example}

\begin{example}
Example \ref{bad component} gives one obstruction to $Y(\cT)$ being projective over $Y(Z)$, namely the existence
of a nonprojective core component.  However, it is possible for $Y(\cT)$ is not projective over $Y(Z)$ even if all
of the core components are projective.  Indeed,
let $\cT$ be the tiling in Figure \ref{irregular}.  The tiling $\cT$ is cubical, so $Y(\cT)$ is smooth
by Proposition \ref{tiling reg}.  All of the core components of $Y(\cT)$ are toric surfaces,
and therefore projective.  But $\cT$ is not regular, so $Y(\cT)$ is not projective over $Y(Z)$.
\end{example}

\bibliographystyle{alpha}
\bibliography{symplectic}

\end{document}